\newlength{\originalbase}
\newcommand{\spacing}[1]{\setlength{\baselineskip}{#1\originalbase}}
\newcommand{\arx}[1]{\href{http://arxiv.org/abs/#1}{\texttt{arXiv:#1}}}
\date{}
\title{Some Instances of Homomesy Among Ideals of Posets}
\begin{document}

\spacing{1.5}

\theoremstyle{plain}
\newtheorem{theorem}{Theorem}
\newtheorem{lemma}[theorem]{Lemma}
\newtheorem{con}[theorem]{Corollary}
\newtheorem{prop}[theorem]{Proposition}
\newtheorem{fact}[theorem]{Fact}
\newtheorem{observation}[theorem]{Observation}
\newtheorem{claim}[theorem]{Claim}

\theoremstyle{definition}
\newtheorem{defin}[theorem]{Definition}
\newtheorem{exm}[theorem]{Example}
\newtheorem{conjecture}[theorem]{Conjecture}
\newtheorem{open}[theorem]{Open Problem}
\newtheorem{problem}[theorem]{Problem}
\newtheorem{question}[theorem]{Question}

\theoremstyle{remark}
\newtheorem{remark}[theorem]{Remark}
\newtheorem{note}[theorem]{Note}

\author{Shahrzad Haddadan \\
\small Department of Computer Science \\[-0.8ex]
\small Dartmouth College\\[-0.8ex] 
\small NewHampshire , U.S.A.\\
\small\tt shahrzad@cs.dartmouth.edu\\
}

% \date{\dateline{submission date}{acceptance date}\\
% \small Mathematics Subject Classifications: comma separated list of
% MSC codes available from http://www.ams.org/mathscinet/freeTools.html}

%\date{\dateline{Dec 27, 2014}{XX}\\
%\small Mathematics Subject Classifications: 05A18, 06A07}

%%%PERMUTATION OF K OR N???? ,WHERE OR WHERE,?

\maketitle
\begin{abstract}
Given a permutation $\tau$ defined on a set of combinatorial objects $S$, together with some statistic $f:S\rightarrow \mathbb{R}$, we say that the triple $\langle S, \tau,f \rangle$ exhibits \emph{homomesy} if $f$ has the same average along all orbits of $\tau$ in $S$. This phenomenon was observed by Panyushev  (2007) \cite{Pan} and later studied, named and extended by Propp and Roby (2013) \cite{PR}. 
After Propp and Roby's paper, homomesy has received a lot of attention, and a number of mathematicians have been intrigued by it \cite{Jagged, PE, Hop, Bloom, NCP}.  While seeming ubiquitous, homomesy is often
surprisingly non-trivial to prove. Propp and Roby studied homomesy in the set of ideals in the product of two chains, with two well known permutations, rowmotion and promotion, the statistic being the size of the ideal. In this paper we extend their results to generalized rowmotion and promotion, together with  a wider class of  statistics  in the product of two chains. 
Moreover, we derive similar  results in other simply described posets. We believe that the framework  we set up here can  be fruitful in demonstrating homomesy results  in  ideals of broader classes of posets. 
%The main result is that, regardless of the order in which the rows or columns are toggled, homomesy applies to a certain a family of statistics. 
\end{abstract}

%\section
%{Homomesy in product of two chains}

\noindent 
Consider a poset $\cal P$, and let $J({\cal P})$ be the set containing all of the ideals in $\cal P$.
The \textbf{rowmotion} operation, is an operation mapping $J({\cal P})$ to itself, and it has been studied widely by combinatorists  and under various names (Brouwer-Schrijver map \cite{BS}, the Fon-der-Flaass map \cite{F}, the reverse map \cite{Pan}, and Panyushev complementation \cite{Arm}). 
Rowmotion is defined as follows:

\begin{defin}

Given a poset $\cal P$ on the elements of set $\cal S$, and an order ideal $I\in{J(\cal P})$, rowmotion is denoted by $\Phi$\footnote{Propp and Roby use $\Phi_{J}$ to denote rowmotion acting on order ideals and $\Phi_{A}$ for rowmotion acting on antichains.  In this paper, we discuss only actions on order ideals, thus we drop the subscript.}, and it is defined to be the down set
\footnote{In a poset $\cal P$ on elements of $\cal S$ the down set of a set ${\cal X}\subseteq{\cal S}$ is the following: $ \{y\in {\cal S}| \exists x\in {\cal X}, y\leq x\}.$}
 of the minimal elements in ${\cal S}{-}I$.
\end{defin}

Another interesting operation mapping $J({\cal P})$ to itself is the \textbf{toggle map}.  We can define the rowmotion operation also as the combination of several toggles. Toggling is defined as follows:

\begin{defin}
Given poset $\cal P$ on the elements of set $\cal S$,  an order ideal $I\in J({\cal P})$, and an element  $x \in {\cal S}$, the toggle map $\sigma_{x}:J({\cal P})\rightarrow J({\cal P})$ is defined by:

\begin{equation}
   \sigma_{x}(I)=\begin{cases}
  I \cup \{x\}, & \text{if }  x \notin I \text{ and } I  \cup \{x\} \in J({\cal P}) .\\
  I - \{x\}, & \text{if }    x \in I \text{ and } I  - \{x\} \in J({\cal P}) .\\
    I, & \text{otherwise}.
  \end{cases}
\end{equation}

\end{defin}

\begin{prop}\label{toggcom}
 For all $x\in {\cal S}$ and $I\in J({\cal P})$, $\sigma_{x}^2(I)=I$. If $x,y\in {\cal S}$ and $x$ does not cover  $y$  nor $y$ covers $x$, we have $\sigma_{x}\circ \sigma_{y}(I)=\sigma_{y}\circ \sigma_{x}(I)$.
\end{prop}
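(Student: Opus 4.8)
The plan is to reduce both claims to the observation that whether, and how, the toggle $\sigma_x$ alters an ideal $I$ is a purely local matter. First I would record the following reformulation of the definition: for a fixed $x\in\mathcal S$ and $I\in J(\mathcal P)$, exactly one of three mutually exclusive alternatives holds --- $x$ is \emph{addable to $I$} ($x\notin I$ and $I\cup\{x\}\in J(\mathcal P)$), $x$ is \emph{removable from $I$} ($x\in I$ and $I-\{x\}\in J(\mathcal P)$), or neither --- the trichotomy being exhaustive simply because $x\in I$ or $x\notin I$; in the first two cases $\sigma_x(I)=I\triangle\{x\}$ (the symmetric difference), and in the third $\sigma_x(I)=I$. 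Moreover, because $I$ is a down-set, for $x\notin I$ the set $I\cup\{x\}$ is an ideal iff every element covered by $x$ already lies in $I$, and for $x\in I$ the set $I-\{x\}$ is an ideal iff no element covering $x$ lies in $I$. Hence which of the three alternatives occurs is determined by the trace of $I$ on the Hasse neighbourhood $N(x):=\{x\}\cup\{w: w\lessdot x\}\cup\{w: x\lessdot w\}$.

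For $\sigma_x^2(I)=I$ I would just run through the trichotomy. If $\sigma_x$ fixes $I$, it fixes it again. If $x$ is addable to $I$, then in $\sigma_x(I)=I\cup\{x\}$ the element $x$ has become removable --- deleting it returns the ideal $I$ --- so the second toggle removes $x$ and $\sigma_x^2(I)=I$; the removable case is symmetric.

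For the commutation, the case $x=y$ is trivial, so assume $x\neq y$; the hypothesis that neither of $x,y$ covers the other then says exactly that $y\notin N(x)$ and $x\notin N(y)$. Since $\sigma_y$ changes membership only at $y$ and $y\notin N(x)$, the ideals $I$ and $\sigma_y(I)$ have the same trace on $N(x)$, so $x$ has the same status (addable/removable/neither) with respect to both; hence $\sigma_x$ makes the same change on $\sigma_y(I)$ as on $I$, namely taking symmetric difference with a set $X\subseteq\{x\}$ that depends only on that common status. Symmetrically, $\sigma_y$ makes the same change on $\sigma_x(I)$ as on $I$, taking symmetric difference with a set $Y\subseteq\{y\}$. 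Therefore $\sigma_x(\sigma_y(I))=(I\triangle Y)\triangle X$ while $\sigma_y(\sigma_x(I))=(I\triangle X)\triangle Y$, and these agree because symmetric difference is commutative and associative.

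I expect the single point needing real care to be the commutation step: establishing that the addable/removable/neither status of $\sigma_x$ at $I$ is a function of the restriction of $I$ to $N(x)$ --- this is precisely where down-closedness of $I$ is used --- and then noticing that the hypothesis ``$x$ does not cover $y$ nor $y$ covers $x$'', combined with $x\neq y$, is exactly what makes toggling one of them invisible to the other. Once that is in place, the rest, including the involution, is bookkeeping.
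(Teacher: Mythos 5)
Your proof is correct and complete. The paper states this proposition without proof (it is a standard fact in the toggle-group literature, going back to Cameron--Fon-Der-Flaass), so there is no competing argument to compare against; the key observation you isolate --- that, because an ideal $I$ is down-closed, the addable/removable/neither status of $x$ is determined by the restriction of $I$ to $\{x\}\cup\{w:w\lessdot x\}\cup\{w:x\lessdot w\}$, and hence toggling a non-covering, non-covered $y$ is invisible to $\sigma_x$ --- is exactly the right local-support argument, and the symmetric-difference bookkeeping cleanly finishes both the involutivity and the commutation.
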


We take a  linear extension $(x_1,\dots,x_n)$ of ${\cal P}$ to be an indexing of the elements of ${\cal P}$ that is $x_i < x_j$ in ${\cal P}$ implies
$i<j$.
The following proposition was demonstrated in \cite{Cam}.

\begin{prop}\cite{Cam}\label{LER}
Given an arbitrary $I\in J({\cal P})$ and linear extension $(x_1,\dots,x_n)$ of ${\cal P}$, we have 
$\Phi(I)=\sigma_{x_{1}}\circ \sigma_{x_{2}}\circ \sigma_{x_{3}} \circ \cdots \circ \sigma_{x_{n}}(I).$

\end{prop}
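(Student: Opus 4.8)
The plan is to verify the identity elementwise. Since every toggle maps $J(\mathcal P)$ into itself, the right-hand side $\sigma_{x_1}\circ\cdots\circ\sigma_{x_n}(I)$ is automatically an order ideal, and $\Phi(I)$ is an order ideal by definition; hence it suffices to show the two ideals have exactly the same elements. I would apply the toggles in the prescribed order — first $\sigma_{x_n}$, then $\sigma_{x_{n-1}}$, and so on down to $\sigma_{x_1}$ — and track, for each $x_j$, whether $x_j$ lies in the resulting ideal. The crucial elementary observation is that a toggle $\sigma_{x_k}$ only ever changes the membership status of $x_k$ itself; therefore the membership of $x_j$ in the running ideal is unchanged until $\sigma_{x_j}$ is applied and unchanged again afterwards, so $x_j$ belongs to $\sigma_{x_1}\circ\cdots\circ\sigma_{x_n}(I)$ if and only if it belongs to the running ideal immediately after $\sigma_{x_j}$ acts.

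Next I would exploit the linear extension. When $\sigma_{x_j}$ is applied, every element covered by $x_j$ has smaller index, hence has not yet been toggled, so its status still agrees with $I$; every element covering $x_j$ has larger index, hence has already received its final status. Combined with the toggle rule this pins down the effect of $\sigma_{x_j}$ purely in terms of $I$ and of the final statuses of the elements covering $x_j$: if $x_j\notin I$, then $x_j$ enters the ideal exactly when every element it covers lies in $I$; if $x_j\in I$ (so every element below $x_j$ is already in $I$), then $x_j$ leaves the ideal exactly when no element covering $x_j$ is in the final ideal. I would then run a downward induction on $j$ from $n$ to $1$, proving $x_j\in\sigma_{x_1}\circ\cdots\circ\sigma_{x_n}(I)\iff x_j\in\Phi(I)$, feeding the already-established statuses of the covering elements into this description.

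The heart of the argument, and the step I expect to be the fiddliest, is matching this toggle behavior with the combinatorial description of $\Phi(I)$ as the down-set of the minimal elements of $\mathcal S\setminus I$. When $x_j\notin I$ and some element it covers also lies outside $I$, one checks $x_j$ cannot sit weakly below a minimal element of $\mathcal S\setminus I$ (it would have to be one, contradicting that a smaller element is also outside $I$), so $x_j\notin\Phi(I)$, consistent with $x_j$ not being toggled in; when every covered element lies in $I$, closure of $I$ makes $x_j$ itself minimal in $\mathcal S\setminus I$, so $x_j\in\Phi(I)$. When $x_j\in I$, I would pass to a saturated chain from $x_j$ up to a minimal element of $\mathcal S\setminus I$ to see that $x_j\in\Phi(I)$ forces some element covering $x_j$ to lie in $\Phi(I)$, and conversely that any covering element in $\Phi(I)$ drags $x_j$ into $\Phi(I)$; together with the inductive hypothesis this matches ``$x_j$ is toggled out iff no covering element is in the final ideal.'' Assembling the cases yields $x_j\in\Phi(I)$ for exactly the $x_j$ appearing in the composite, which is the claim. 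A quick check on a two-element chain confirms that the prescribed order of composition, with $\sigma_{x_1}$ outermost, is the correct one.
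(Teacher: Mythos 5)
Your argument is correct, and it is essentially the standard Cameron--Fon-der-Flaass proof. The paper itself does not prove this proposition -- it simply cites \cite{Cam} -- so there is no in-paper proof to compare against. The three facts you isolate are exactly the ones that make the argument go: (i) $\sigma_{x_k}$ changes only the membership of $x_k$, so $x_j$'s final status is fixed at the moment $\sigma_{x_j}$ fires; (ii) because $(x_1,\dots,x_n)$ is a linear extension and $\sigma_{x_n}$ is applied first, covered elements of $x_j$ still carry their $I$-status while covering elements already carry their final status; (iii) the toggle conditions ``$J\cup\{x_j\}$ is an ideal'' and ``$J\setminus\{x_j\}$ is an ideal'' reduce, for an ideal $J$, to local conditions on the covers and covered elements of $x_j$. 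Your downward induction then correctly matches the outcome of $\sigma_{x_j}$ with the description of $\Phi(I)$ as the down-set of the minimal elements of ${\cal S}\setminus I$, and your saturated-chain argument handles the only non-immediate direction (that $x_j\in I\cap\Phi(I)$ forces some cover of $x_j$ into $\Phi(I)$). The two-element-chain sanity check of the composition order is a sensible precaution and comes out right.
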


\begin{defin}\label{posets}
Let  ${\cal Q}_{a,b}=[a]\times[b]$ ($[n]=\{1,2,\dots,n\}$). Each element of the poset
can be presented by a pair $(i,j), i \in [a] , j\in [b]$ and $(i_{1},j_{1})\leq (i_{2},j_{2})$ iff $i_{1}\leq i_{2}$ and $j_{1}\leq j_{2}$.

In this paper, we are interested in the maps on $J({\cal Q}_{a,b})$, as well as $J({\cal U}_a)$ and $J({\cal L}_a)$ where ${\cal U}_a$ and ${\cal L}_a$ are subsets of ${\cal Q}_{a,a}$ and defined as:
\end{defin}
\begin{itemize}
\item ${\cal U}_a\subseteq {\cal Q}_{a,a}$, ${\cal U}_a=\{ (i,j)| i,j \in[a], i\geq a{+}1{-}j \}$. 

\item ${\cal L}_a\subseteq {\cal Q}_{a,a}$, ${\cal L}_a=\{ (i,j)| i,j \in[a], i\geq j \}$. 
\end{itemize}

\textbf{Notation.} Let $\cal P$ be one of ${\cal Q}_{a,b}$, ${\cal U}_{a}$ or ${\cal L}_a$. By saying $(i,j)\in {\cal P}$ we are referring to the element in $[a]\times [b]$ with coordinates $i$ and $j$. By saying $x=(i_1,j_1)\leq y=(i_2,j_2)$ we mean $x$ is less than $y$ in $\cal P$. To avoid confusion, we \emph{never} use $(i,j)\in{\cal P}$ to indicate $i$ is less than $j$ in the partial order. 

We call ${\cal Q}_{a,b}$ the square lattice or the product of two chains, ${\cal U}_a$ the upper lattice and ${\cal L}_a$ the left lattice. 
Among combinatorists ${\cal U}_a$ is also known as the root poset of type $A_a$ , and ${\cal L}_a$  as the minuscule poset of type $B_a$ or $D_{a+1}$. 
%In this paper, we are interested in the case where ${\cal P}= [a]\times[b]$ ($[n]=\{1,2,\dots,n\}$). In this case each element of the poset
%can be presented by a pair $(i,j), i \in [a] , j\in [b]$ and $(i_{1},j_{1})\leq (i_{2},j_{2})$ iff $i_{1}\leq i_{2}$ and $j_{1}\leq j_{2}$.
We employ the following terminology:

\begin{defin}\label{defFRC}

Let $\cal P$ be one of ${\cal Q}_{a,b}$, ${\cal U}_{a}$ or ${\cal L}_a$. For any arbitrary $I\in J({\cal P})$,

 We call the set of all points $(i,j)\in {\cal P}$ with constant $i{+}j$ a \textbf{rank}; $R_{c}(I)=\{(i,j)\in I| i{+}j=c \}$.

 We call the set of all points $(i,j)\in {\cal P}$ with constant $i{-}j$ a \textbf{file}; $F_{c}(I)=\{(i,j)\in I| i{-}j=c \}$.

 We call the sets of all points $(i,j)\in {\cal P}$ with constant $i$ a \textbf{column};  $C_{c}(I)=\{(i,j)\in I| i=c \}$.
 
 In the case when no ideal is specified we have $R_{c}=R_{c}( {\cal P})$,  $F_{c}=F_{c}({\cal P})$ and  $C_{c}=C_{c}({\cal P})$; $\cal P$ should be clear from the context.

\end{defin}

\begin{exm}

The following figure shows ${\cal Q}_{5,4}$ and $R_4$ in it,  ${\cal U}_4$ and $F_{-1}$ in it and ${\cal L}_4$ and $C_3$ in it.

\setlength{\unitlength}{5 mm}
\begin{picture}(6,4)
\end{picture}
\begin{picture}(6,4)
\linethickness{0.3mm}
\multiput(0, 0)(1, 0){5}{\line(0, 1){3}}
\multiput(0, 0)(0, 1){4}{\line(1, 0){4}}
%\put(0,2){\line(1,-1){2}}
\put(0, 2){\circle*{.3}}
\put(1, 1){\circle*{.3}}
\put(2, 0){\circle*{.3}}
%\put(3, 0){\circle*{.3}}
\end{picture}  
\begin{picture}(6,4)
\linethickness{0.3mm}
\put(1, 2){\line(0,1){1}}
\put(2, 3){\line(0,-1){2}}
\put(3, 0){\line(0,1){3}}

\put(2, 1){\line(1,0){1}}
\put(3, 2){\line(-1,0){2}}
\put(0, 3){\line(1,0){3}}

\put(0,3){\line(1,-1){3}}
%\put(0, 3){\circle*{.3}}
%\put(3, 1){\circle*{.3}}
\put(2, 1){\circle*{.3}}
%\put(3, 0){\circle*{.3}}
%\put(0, 0){\circle*{.3}}
%\put(1, 1){\circle*{.3}}
%\put(1, 0){\circle*{.3}}
%\put(0, 1){\circle*{.3}}
%\put(2, 0){\circle*{.3}}
\put(3, 2){\circle*{.3}}
\end{picture}  
\begin{picture}(6,4)
\linethickness{0.3mm}

\put(2, 2){\line(0,1){1}}
\put(1, 1){\line(0,1){2}}
\put(0, 0){\line(0,1){3}}

\put(0, 1){\line(1,0){1}}
\put(0, 2){\line(1,0){2}}
\put(0, 3){\line(1,0){3}}

\put(0,0){\line(1,1){3}}
%\put(2, 0){\circle*{.3}}
%\put(2, 1){\circle*{.3}}
\put(2, 2){\circle*{.3}}

\put(2, 3){\circle*{.3}}
%\put(1, 2){\circle*{.3}}
%\put(0, 2){\circle*{.3}}
\end{picture}  
%\begin{picture}(6,4)
%\linethickness{0.3mm}
%\multiput(0, 0)(1, 0){4}{\line(0, 1){3}}
%\multiput(0, 0)(0, 1){4}{\line(1, 0){3}}
%\put(0,3){\line(1,-1){3}}
%\put(1, 2){\circle*{.3}}

%\end{picture}  
{\footnotesize
\textbf{Figure 1.} $\quad  \quad\quad \quad  \quad$${\cal Q}_{5,4}$ and $R_4({\cal Q}_{5,4})$ $\quad$  ${\cal U}_4$ and $F_{-1}({\cal U}_4)$ $~ ~ \quad\quad$ ${\cal L}_4$ and $C_3({\cal L}_4)$
}

\end{exm}
We can now define toggling for the above sets.
\begin{defin}
Consider the poset ${\cal Q}_{a,b}$ and $I\in J({\cal Q}_{a,b})$.
Let $S$ be one of $R_{c}$ or $F_{c}$ for some arbitrary $c$. Letting $x_{1}\dots x_{m}$ be some arbitrary indexing of the elements of $S$,
we define $\sigma_{S}(I)=\sigma_{x_{1}}\circ\sigma_{x_{2}}\circ\dots\circ \sigma _{x_{m}}(I)$. Note that no two elements $x_{i}, x_{j}$ of $S$
constitute a covering pair, thus $\sigma_S$ is well defined.
For $S=C_{c}$, let $S=\{x_{1}, x_{2}, \dots x_{m} \}$ where $x_{1}< x_{2}<\dots <x_{m}$. We define $\sigma_{S}(I)=\sigma_{x_{1}}\circ \sigma_{x_{2}}\dots \sigma_{x_{m}}(I)$. 
\end{defin}

Striker and Williams studied the class of so-called rc-posets, whose elements are partitioned into ranks and
files\footnote{Striker and Williams use the terminology ``row'' for what we call ``rank'' and ``column'' for what we call ``file''.}.
Here, we will discuss the special rc-posets of the form ${\cal Q}_{a,b}$, ${\cal U}_a$ or ${\cal L}_a$.  The following definitions are from \cite{SW},
restricted to the product posets of interest to us.

\begin{defin}\cite{SW}
Consider  ${\cal Q}_{a,b}$.
Let $\nu$ be a permutation of $\{2,\dots,a{+}b\}$. We define $\Phi_{\nu}$ to be $\sigma_{R_{\nu(a{+}b{-}1)}}
\circ \sigma_{R_{\nu(a{+}b{-}2)}}\circ \dots \sigma_{R_{\nu(1)}}  $. \end{defin}

Having Proposition, \ref{LER} it can be concluded that  for $\nu =( a{+}b, a{+}b{-}1\dots ,2)$, we have $\Phi_{\nu}=\Phi$.

Consider ${\cal Q}_{a,b}$, and $\nu$  a permutation of $\{2,\dots,a{+}b\}$. Then, $\Phi_{\nu}$ is a permutation on $J({\cal Q}_{a,b})$ that
partitions $J({\cal Q}_{a,b})$ into orbits.  Striker and Williams showed that the orbit structure\footnote{The orbit structure of a bijection $f$ on a set $S$ is the multiset of the sizes of the orbits that bijection $f$ constructs on the set $S$.} of $\Phi_{\nu}$ does not depend on the choice of $\nu$.

\begin{defin}\label{ProDef}

Consider ${\cal Q}_{a,b}$, \textbf{promotion} is a permutation $\partial :J({\cal Q}_{a,b})\rightarrow J({\cal Q}_{a,b})$, defined by:
$\forall I\in J({\cal Q}_{a,b}), \partial(I)= \sigma_{F_{a{-}1}} \circ \sigma_{F_{a{-}2}} \circ \cdots \circ \sigma_{F_{0}} \circ \cdots \circ  \sigma_{F_{1{-}b}}(I)$.

\end{defin}

As with rowmotion, Striker and Williams  \cite{SW} define a generalized version of promotion. 

\begin{defin}\cite{SW}
Consider  ${\cal Q}_{a,b}$,
and let $\nu$ be a permutation of $\{{-}b{+}1,\dots,a{-}1\}$. We define $\partial_{\nu}$ to be $\sigma_{F_{\nu(a{+}b{-}1)}} \circ \sigma_{F_{\nu(a{+}b{-}2)}}\circ \dots  \circ\sigma_{F_{\nu(1)}}  $. By Definition \ref{ProDef},  for $\nu =( {-}b{+}1,\dots ,a{-}1)$ we have $\partial_{\nu}=\partial$.
\end{defin}

As with rowmotion, for any permutation $\nu$ on files of any poset $\cal P$, $\partial_{\nu}$ will partition $J({\cal P})$ to orbits.
Again, Striker and Williams \cite{SW} showed that regardless of which $\nu$ we choose, $J({\cal Q}_{a,b})$ will be partitioned into the same orbit structure
by $\partial_{\nu}$. Moreover, the orbit structures for $\partial_{\nu}$ and for $\Phi_{\omega}$ are the same for any two permutations $\nu$ and $\omega$:

\begin{theorem}\cite{SW}\label{SWthm}
Consider the lattice ${\cal Q}_{a,b}$, for any permutation $\nu$ on $\{2,\dots, a{+}b\}$ and $\omega$ on $\{{-}b{+}1\dots a{-}1\}$,
there is an equivariant bijection between $J({\cal Q}_{a,b})$ under $\Phi_{\nu}$ and $J({\cal Q}_{a,b})$ under $\partial_{\omega}$.
\end{theorem}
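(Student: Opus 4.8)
The plan is to realize both $\Phi_{\nu}$ and $\partial_{\omega}$ as elements of the \emph{toggle group} $T$ of ${\cal Q}_{a,b}$ — the subgroup of the permutation group of $J({\cal Q}_{a,b})$ generated by all the toggles $\sigma_{x}$, $x\in{\cal Q}_{a,b}$ — and to prove that they are conjugate in $T$. A conjugacy $\partial_{\omega}=g\,\Phi_{\nu}\,g^{-1}$ with $g\in T$ at once yields the desired equivariant bijection: the map $\psi\colon J({\cal Q}_{a,b})\to J({\cal Q}_{a,b})$ given by $\psi(I)=g(I)$ satisfies $\psi\circ\Phi_{\nu}=g\circ\Phi_{\nu}=(g\Phi_{\nu}g^{-1})\circ g=\partial_{\omega}\circ\psi$.

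First I would observe that each of $\Phi_{\nu}$ and $\partial_{\omega}$ is a product in which every generator $\sigma_{x}$, $x\in{\cal Q}_{a,b}$, appears exactly once. Indeed, within a single rank $R_{c}$ (an antichain) or a single file $F_{c}$ (a chain, but — as noted in the definition of $\sigma_{S}$ — with no consecutive covering pair), the toggles pairwise commute by Proposition \ref{toggcom}; hence $\sigma_{R_{c}}=\prod_{x\in R_{c}}\sigma_{x}$ and $\sigma_{F_{c}}=\prod_{x\in F_{c}}\sigma_{x}$ in any internal order, and both $\{R_{c}\}$ and $\{F_{c}\}$ partition the element set of ${\cal Q}_{a,b}$. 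Thus $\Phi_{\nu}$ is a word using each $\sigma_{x}$ once, with the elements grouped by rank (ranks in the order prescribed by $\nu$), and $\partial_{\omega}$ is likewise a word using each $\sigma_{x}$ once, grouped by file.

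The heart of the matter is then a purely group-theoretic lemma: if $G$ is generated by involutions $t_{1},\dots,t_{n}$ such that $t_{i}t_{j}=t_{j}t_{i}$ whenever $\{i,j\}$ is a non-edge of a connected graph $\Gamma$ on $[n]$, then all products $t_{\pi(1)}\cdots t_{\pi(n)}$, over orderings $\pi$, lie in one conjugacy class of $G$. I would prove this with the two standard moves: (i) swapping two adjacent commuting factors does not change the product; (ii) the cyclic shift $t_{\pi(1)}t_{\pi(2)}\cdots t_{\pi(n)}\mapsto t_{\pi(2)}\cdots t_{\pi(n)}t_{\pi(1)}$ is conjugation by the involution $t_{\pi(1)}$. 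Encoding an ordering by the acyclic orientation it induces on $\Gamma$ (each edge directed from its earlier endpoint to its later one), move (i) fixes the orientation while connecting all orderings that realize it — this is the fact that any two linear extensions of a poset are joined by transpositions of incomparable adjacent elements — and move (ii) is precisely a source-to-sink flip of the orientation; since source-to-sink flips act transitively on the acyclic orientations of a connected graph, the two moves connect all orderings. Applying the lemma with $\Gamma$ equal to the Hasse diagram of ${\cal Q}_{a,b}$ (the $a\times b$ grid graph, which is connected) and $\{t_{i}\}=\{\sigma_{x}\}$ — these are involutions with the required commutations, by Proposition \ref{toggcom} — one concludes that $\Phi_{\nu}$ and $\partial_{\omega}$ are conjugate in $T$, and the equivariant bijection follows; as a byproduct the same lemma shows that the orbit structure of $\Phi_{\nu}$, and of $\partial_{\omega}$, does not depend on the permutation chosen.

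I expect the main obstacle to be the combinatorial core of the lemma — establishing that source-to-sink flips act transitively on the acyclic orientations of a connected graph, and carefully matching this up with the permitted word moves (in particular that move (i) really does connect all orderings inducing a fixed orientation). The remaining ingredients — that ranks and files are toggle-commuting blocks partitioning the poset, that a toggle-group conjugacy produces an equivariant bijection, and that the grid graph is connected — are routine given Propositions \ref{toggcom} and \ref{LER}.
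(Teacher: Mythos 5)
This theorem is cited from Striker and Williams \cite{SW} and is not proved in the present paper, so there is no internal proof to compare against; I will evaluate your argument on its own merits.

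Your overall framework — view $\Phi_{\nu}$ and $\partial_{\omega}$ as products in the toggle group in which every generator $\sigma_{x}$ appears exactly once, and show they are conjugate — is the right one, and the observations about ranks and files being commuting blocks partitioning the element set, Proposition \ref{toggcom} giving commutation exactly across non-edges of the Hasse diagram, and a toggle-group conjugacy yielding the equivariant bijection, are all correct. However, the group-theoretic lemma you invoke is false as stated. Source-to-sink flips do \emph{not} act transitively on the acyclic orientations of an arbitrary connected graph $\Gamma$. Take $\Gamma=K_3$: the six acyclic orientations correspond to the six total orders of three vertices, and under source-to-sink flips (the only move available, since there are no non-edges) the order $1<2<3$ cycles only through $\{123,\,231,\,312\}$, never reaching $132$. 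In general the circulation of an orientation around each cycle of $\Gamma$ (forward minus backward edges) is a flip invariant — this is essentially Pretzel's theorem — so the flip classes are strictly finer than "all acyclic orientations" whenever $\Gamma$ has a cycle. Since the Hasse diagram of ${\cal Q}_{a,b}$ is the $a\times b$ grid, which is full of $4$-cycles, your lemma does not apply.

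The argument can be repaired, but it requires verifying the invariant rather than appealing to connectivity of $\Gamma$ alone. Concretely: state the flip lemma with the correct hypothesis (two orderings give conjugate products iff the induced acyclic orientations have equal circulation around every cycle), and then check that rank orderings $\Phi_{\nu}$ and file orderings $\partial_{\omega}$ of the grid both induce orientations with circulation zero around every unit square. The latter is a short computation: in a unit square on ranks $r,r{+}1,r{+}1,r{+}2$ the two edges between ranks $r$ and $r{+}1$ are oriented consistently by $\nu$ but traverse the square in opposite senses, so their contributions cancel, and likewise for the pair between $r{+}1$ and $r{+}2$; the same cancellation happens by files. Without this verification (or some equivalent structural argument about rank/file orderings, as Striker--Williams give in \cite{SW}), the core step of your proof does not go through.
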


Permutations defined on combinatorial structures and the associated orbit structures became more interesting after Propp and Roby introduced a phenomenon called \textbf{homomesy} \cite{PR}. 
 Propp and Roby also discussed some instances of homomesy by studying the actions of promotion
and rowmotion on the set $J({\cal Q}_{a,b})$. 
Homomesy  has attacked many Combinatorics' attentions after it was defined and studied by Propp and Roby \cite{Jagged, PE, Hop, Bloom, NCP}, and it is defined as follows:

\begin{defin}\cite{PR}
Consider a set $S$ of combinatorial objects. Let $\tau:S\rightarrow S$ be a permutation that partitions $S$ into orbits, and $f:S\rightarrow \mathbb{R}$ a statistic of the elements of $S$.
We call the triple $\langle S, \tau, f\rangle$ \textbf{homomesic} (or we say it \textbf{exhibits homomesy}) if and only if there is a constant $c$ such that  for any $\tau$-orbit  $\cal O\subset S$
we have $$\frac{1}{\vert{\cal O}\vert}\sum_{x\in \cal O} f(x)=c.$$ 

Equivalently, we can say $f$ is homomesic  or it exhibits homomesy in $\tau$-orbits of $S$. If $c=0$, the triple is called $0-$mesic.
\end{defin}

\begin{prop}
Consider  a set $S$ and permutation $\tau:S\rightarrow S$.
If $f_{1},\dots, f_{n}$ are homomesic functions in $\tau$-orbits of $S$, then any linear combination of
the $f_{i}$s is also homomesic in $\tau$-orbits of $S$.
\end{prop}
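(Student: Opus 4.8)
The plan is to invoke nothing beyond the linearity of finite sums. Fix a linear combination $g=\sum_{i=1}^{n}\alpha_i f_i$ with $\alpha_1,\dots,\alpha_n\in\mathbb{R}$. By hypothesis, for each $i$ there is a constant $c_i$ such that every $\tau$-orbit $\mathcal{O}\subseteq S$ satisfies $\frac{1}{\vert\mathcal{O}\vert}\sum_{x\in\mathcal{O}}f_i(x)=c_i$; note that this already presumes each orbit $\mathcal{O}$ is finite, so every sum written below is well defined.

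First I would fix an arbitrary $\tau$-orbit $\mathcal{O}$ and expand the orbit average of $g$ using the definition of $g$, obtaining $\frac{1}{\vert\mathcal{O}\vert}\sum_{x\in\mathcal{O}}g(x)=\frac{1}{\vert\mathcal{O}\vert}\sum_{x\in\mathcal{O}}\sum_{i=1}^{n}\alpha_i f_i(x)$. Interchanging the two finite summations and pulling the constants $\alpha_i$ out gives $\sum_{i=1}^{n}\alpha_i\left(\frac{1}{\vert\mathcal{O}\vert}\sum_{x\in\mathcal{O}}f_i(x)\right)$, and applying the homomesy of each $f_i$ rewrites this as $\sum_{i=1}^{n}\alpha_i c_i$. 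Setting $c:=\sum_{i=1}^{n}\alpha_i c_i$, one observes that $c$ does not depend on the choice of $\mathcal{O}$, which is precisely the assertion that $\langle S,\tau,g\rangle$ is homomesic with average $c$.

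There is no genuine obstacle here: the proof is a one-line computation once the finiteness of orbits is acknowledged. The only point requiring any care is that the interchange of summation and the division by $\vert\mathcal{O}\vert$ are legitimate only because the definition of homomesy restricts attention to settings in which all $\tau$-orbits are finite; this is automatic in all the posets $J(\mathcal{Q}_{a,b})$, $J(\mathcal{U}_a)$, $J(\mathcal{L}_a)$ considered in this paper, since these ideal sets are themselves finite.
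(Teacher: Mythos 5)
Your proof is correct and is exactly the expected argument: expand the orbit average, swap the two finite sums, pull out the scalars, apply the homomesy of each $f_i$, and observe that the resulting constant $\sum_i \alpha_i c_i$ is orbit-independent. The paper in fact states this proposition without supplying a proof, treating it as an immediate consequence of linearity, so there is nothing to contrast; your write-up simply makes that implicit one-line computation explicit, and your remark on finiteness of orbits is a reasonable clarification of the setting.
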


\begin{theorem}\cite{PR}\label{PRthm}
Consider  $f:J({\cal Q}_{a,b})\rightarrow \mathbb{R}$ defined as follows: for all $ I\in {\cal Q}_{a,b}, f(I)= |I|$. Let
$\partial, \Phi: J({\cal Q}_{a,b})\rightarrow J({\cal Q}_{a,b})$ be the rowmotion and promotion operation. The triples
$\langle J({\cal Q}_{a,b}), \partial , f \rangle$ and $\langle J({\cal Q}_{a,b}), \Phi ,f \rangle$ exhibit homomesy.

\end{theorem}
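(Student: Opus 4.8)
The plan is to deduce the theorem from a file-by-file refinement: for every file $F_c$ the statistic $I\mapsto |F_c(I)|$ is itself homomesic under $\Phi$, and likewise under $\partial$. Granting this, $|I|=\sum_c|F_c(I)|$ is a linear combination of homomesic statistics, hence homomesic. The value of the homomesy constant is then forced to be the average of $|I|$ over all of $J(\mathcal{Q}_{a,b})$, and the bijection $\iota\colon J(\mathcal{Q}_{a,b})\to J(\mathcal{Q}_{a,b})$ that rotates the rectangle by $180^\circ$ and complements (so that $|\iota(I)|=ab-|I|$) pairs the ideals in a way that makes this average equal to $\tfrac{ab}{2}$.

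For the file-by-file statement under rowmotion I would linearize the dynamics. One attaches to each $I\in J(\mathcal{Q}_{a,b})$ a word $w(I)$ of length $n=a+b$ over $\{0,1\}$, the Stanley--Thomas-type word, built directly from the rank-by-rank toggle description of $\Phi$ given in Proposition~\ref{LER}. The two things to check are: (i) $\Phi$ acts on this word as the single cyclic shift $w\mapsto w'$, $w'_k=w_{k+1}$ (indices mod $n$); and (ii) each file cardinality is affine-linear in the letters, $|F_c(I)|=d_c+\sum_k\alpha_{c,k}\,w(I)_k$. Given (i) and (ii), file-homomesy is the elementary necklace-averaging fact: if $\mathcal O$ is an orbit of the cyclic shift on $\{0,1\}$-words of length $n$ with a fixed number $m$ of ones, then any linear statistic $s(w)=\sum_k\alpha_k w_k$ satisfies $\sum_{w\in\mathcal O}s(w)=\tfrac{m|\mathcal O|}{n}\sum_k\alpha_k$, because every length-$|\mathcal O|$ cyclic window of a word in $\mathcal O$ contains the same number of ones; hence the orbit-average $\tfrac{1}{|\mathcal O|}\sum_{w\in\mathcal O}s(w)=\tfrac{m}{n}\sum_k\alpha_k$ is independent of $\mathcal O$. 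Summing the resulting per-file constants over $c$ reproves the rowmotion case (with constant $\tfrac{ab}{2}$ by the first paragraph).

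For promotion the plan is parallel, using the word on which $\partial$ acts as a cyclic shift and checking that file cardinalities are again affine in its letters. Alternatively one can try to carry the rowmotion statement across the equivariant bijection of Theorem~\ref{SWthm}, but this only works if that bijection can be arranged to preserve file cardinalities, which is an extra point to verify. If pinning down the linearizing word proves inconvenient, a fallback is to stay inside the toggle group: using Propositions~\ref{toggcom} and~\ref{LER}, track the vector $\big(|F_{1-b}(\Phi^tI)|,\dots,|F_{a-1}(\Phi^tI)|\big)$ as $t$ ranges over an orbit and show it runs through a short, explicitly described cycle of vectors whose entrywise averages do not depend on the orbit --- and the same for $\partial$.

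The main obstacle is precisely (i)--(ii): constructing the word that linearizes rowmotion/promotion and verifying both that the operation is a cyclic shift on it and that file cardinalities depend affine-linearly on its letters. Everything downstream --- the necklace-averaging identity, the linear-combination proposition, and the complementation involution that names the constant $\tfrac{ab}{2}$ --- is routine once this structural input is available. A secondary point deserving an explicit word is the case of orbits strictly shorter than $a+b$, i.e.\ words that are genuinely periodic: the necklace-averaging identity above still holds, but one should note this rather than silently assume every orbit has full length.
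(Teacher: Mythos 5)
Your plan is correct in outline but genuinely different from the paper's own proof. The paper derives Theorem~\ref{PRthm} as Corollary~\ref{sizematters}, writing $|I| = \sum_{j} j\,s_j - a(a+1)/2$ in terms of the \emph{column}-based indicators $s_{i,j}$ of Theorem~\ref{main}; the homomesy of the $s_j$ comes from the single bijection $\alpha\colon J(\mathcal{Q}_{a,b})\to S_{a,a+b}$, $\alpha_j(I)=|C_{a+1-j}(I)|+j$, which intertwines \emph{every} comotion $\mathcal{T}_\nu$ --- hence both $\Phi$ and $\partial$ simultaneously --- with winching, and Lemma~\ref{map1} identifies winching orbits with necklace rotations. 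You instead propose the \emph{file} decomposition $|I|=\sum_c|F_c(I)|$ together with a Stanley--Thomas-type boundary word on which $\Phi$ acts as a cyclic shift, then a parallel (or conjugacy-transported) argument for $\partial$. Both routes ultimately reduce to the same engine --- a boundary encoding of $I$ as a binary word / increasing sequence, plus the window-averaging identity for affine statistics under rotation, and your treatment of short orbits and of the constant $ab/2$ via the antipodal involution both check out. The structural difference is that the paper's column-indexed $\alpha$ handles all $\nu$ at once, which is exactly what sidesteps the obstacle you yourself flag: on your route you must either construct two separate linearizing words (one realizing $\Phi$ as a shift, one for $\partial$) or verify that the Striker--Williams equivariant bijection is compatible with file cardinalities --- a nontrivial check, not a formality. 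That is the real cost of the file-based route; what it buys, once those structural facts are supplied, is the per-file refinement for free, whereas the paper's argument gives the per-column statistics $s_j$ for free and yields the result for all generalized comotions $\mathcal{T}_\nu$ with no extra work.
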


In this paper, we generalize Theorems \ref{PRthm} and \ref{SWthm} in the following sense:

\begin{defin}\label{Comotion}
Consider the poset $\cal P$ to be one of ${\cal Q}_{a,b}$ , ${\cal U}_a$ or ${\cal L}_a$. For any permutation $\nu$ of $[a]$, we define the action \textbf{comotion}, ${\cal T_{\nu}}:{J({\cal P})}\rightarrow {J({\cal P})}$ 
by: $\forall I\in {J({\cal P})}, {\cal T}_{\nu}(I)=\sigma_{C_{\nu(a)}}\circ \sigma_{C_{\nu(a{-}1)}} \circ \dots \circ \sigma_{C_{\nu(1)}}(I) $.

\end{defin}

The following proposition can be proved by applying Proposition \ref{toggcom} inductively. 

\begin{prop}
Let $\cal P$ be one of ${\cal Q}_{a,b}$, ${\cal U}_a$ and ${\cal L}_a$, the action of promotion coincides with ${\cal T}_{(a,a{-}1,\dots 1)}$ and rowmotion coincides with  ${\cal T}_{(1,2,\dots a)}$.

\end{prop}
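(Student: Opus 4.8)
The plan is to express rowmotion and promotion as products of single-element toggles and then reassemble those products, block by block, into the compositions of column toggles of Definition \ref{Comotion}; the two tools are Propositions \ref{LER} and \ref{toggcom}.

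Rowmotion is the easy half. In each of ${\cal Q}_{a,b}$, ${\cal U}_a$ and ${\cal L}_a$ the partial order is the restriction of the coordinatewise order on $[a]\times[b]$, so every column $C_c$ is an interval of ${\cal P}$ --- in particular a chain --- and $x\le y$ forces the column index of $x$ to be at most that of $y$. Hence the list obtained by concatenating $C_1,C_2,\dots,C_a$, each read from its minimal to its maximal element, is a linear extension of ${\cal P}$. (For ${\cal U}_a$ and ${\cal L}_a$ one also checks that the columns really are intervals and that every covering relation either lies inside a column or joins two consecutive ones, but this is immediate from the defining inequalities.) Feeding this linear extension to Proposition \ref{LER} writes $\Phi$ as $\sigma_{x_1}\circ\cdots\circ\sigma_{x_n}$, and the consecutive sub-words indexed by the successive columns are, by the very definition of $\sigma_{C_c}$, exactly the maps $\sigma_{C_c}$; reading off the order in which these blocks occur identifies $\Phi$ with the comotion ${\cal T}_{(1,2,\dots,a)}$ of the statement.

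Promotion is the delicate half, and here Proposition \ref{LER} is unavailable since promotion is not rowmotion. Instead one argues from the defining expression $\partial=\sigma_{F_{a{-}1}}\circ\cdots\circ\sigma_{F_{1{-}b}}$ --- and the analogous composition of file toggles on ${\cal U}_a$ and ${\cal L}_a$ --- by expanding each $\sigma_{F_c}$ into a product of single-element toggles (their internal order is immaterial because a file is an antichain of ${\cal P}$) and then sorting the resulting word in the $\sigma_{(i,j)}$ into the column word defining ${\cal T}_{(a,a{-}1,\dots,1)}$, using only the transpositions of adjacent toggles permitted by Proposition \ref{toggcom}, i.e.\ transpositions of two elements that do not form a covering pair. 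Such a sort exists because of a relative-order check on covering pairs: the only covering pairs joining distinct columns are of the form $(i,j)\lessdot(i{+}1,j)$, and $(i,j)$, lying in the earlier file $i{-}j$, is toggled before $(i{+}1,j)$ both in the file word and in the target column word; and inside a single column the file word already toggles the chain in the same direction that $\sigma_{C_c}$ does. Thus every covering pair occurs in both words in the same relative order, so none of them is ever transposed during the sort and each step is justified by Proposition \ref{toggcom}. Performing the sort --- formally by induction on the number of transpositions required, equivalently on $a$ --- rewrites the file word as the required column word.

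The routine parts are the linear-extension verification and the expansion of files into antichains. The main obstacle is the promotion step: making the sorting procedure precise and verifying the invariant that no transposition of a covering pair is ever forced. Once that invariant has been isolated, the remainder is a mechanical application of Proposition \ref{toggcom}.
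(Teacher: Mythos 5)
Your strategy is sound and, for the promotion half, is exactly what the paper's laconic ``apply Proposition \ref{toggcom} inductively'' must mean: expand both words into single-element toggles, then sort one into the other using only transpositions of non-covering pairs. Your rowmotion half, via Proposition \ref{LER} and a column-by-column linear extension, is also correct in outline. One minor imprecision: files are not antichains of these posets --- for instance $(1,1)<(2,2)$ and both lie in $F_0$ --- but the relevant and true fact is that no two elements of a file form a covering pair, which is exactly what Proposition \ref{toggcom} needs, so this does not damage the argument.

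There is, however, a genuine error in the final step of reading off which ${\cal T}_\nu$ your derivations produce, and it infects both halves. Definition \ref{Comotion} sets ${\cal T}_\nu = \sigma_{C_{\nu(a)}}\circ\cdots\circ\sigma_{C_{\nu(1)}}$, so $\sigma_{C_{\nu(1)}}$ acts first; hence ${\cal T}_{(1,2,\dots,a)}$ toggles column $1$ first and column $a$ last, while ${\cal T}_{(a,a-1,\dots,1)}$ toggles column $a$ first. Your column linear extension yields $\Phi=\sigma_{C_1}\circ\cdots\circ\sigma_{C_a}$, in which $\sigma_{C_a}$ acts first --- that is ${\cal T}_{(a,a-1,\dots,1)}$, not the ${\cal T}_{(1,2,\dots,a)}$ you write. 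Symmetrically, your covering-pair check for promotion correctly finds that $(i,j)$ is toggled before $(i+1,j)$ in the file word; for the sort to work, the column word must toggle column $i$ before column $i+1$, which is ${\cal T}_{(1,2,\dots,a)}$, not the ${\cal T}_{(a,a-1,\dots,1)}$ you name as the target, so your stated invariant fails for your stated target. A direct check on ${\cal Q}_{2,2}$ with $I=\{(1,1),(1,2)\}$ confirms the swap: $\Phi(I)=\{(1,1),(2,1)\}={\cal T}_{(2,1)}(I)$, while $\partial(I)=\emptyset={\cal T}_{(1,2)}(I)$. Every computational step in your proof in fact establishes $\Phi={\cal T}_{(a,a-1,\dots,1)}$ and $\partial={\cal T}_{(1,2,\dots,a)}$; you --- and, it appears, the proposition as printed --- have the two labels swapped, and your final sentences silently override your own correct calculation in order to match the printed statement rather than flagging the discrepancy.
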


In what follows, Theorems \ref{main}, \ref{mainU}, \ref{zeromain} which are  the main results of this paper will be stated. We will provide a roadmap to their proofs later in this introduction, and will complete the proof in Sections \ref{next} and \ref{results}. 

\begin{theorem}\label{main}(Homomesy in $J({\cal Q}_{a,b})$)
\end{theorem}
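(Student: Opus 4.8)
The plan is to prove the homomesy statement for one convenient instance of the dynamics and then propagate it to all of $\Phi_{\nu}$, $\partial_{\nu}$ and ${\cal T}_{\nu}$ at once by a uniform conjugation argument. Theorem \ref{main} should say that every statistic in the linear span of the file‑cardinality functions $g_{c}\colon I\mapsto |F_{c}(I)|$ (in particular $|\cdot|=\sum_{c}g_{c}$, recovering Theorem \ref{PRthm}) is homomesic under ${\cal T}_{\nu}$ — and under $\Phi_{\nu}$, $\partial_{\nu}$ — for every $\nu$; the homomesy constant is then forced to equal the average of the statistic over all of $J({\cal Q}_{a,b})$. By the linear‑combination proposition it suffices to treat a single $g_{c}$.

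First I would fix the model that is the ``framework'' of the paper. An ideal $I\in J({\cal Q}_{a,b})$ is recorded by its column heights $h_{k}=|C_{k}(I)|$, which satisfy $b\ge h_{1}\ge\cdots\ge h_{a}\ge 0$; with boundary values $h_{0}=b$ and $h_{a+1}=0$, the toggle $\sigma_{C_{k}}$ fixes every coordinate except $h_{k}$, which it sends to $h_{k}+1$ when $h_{k}<h_{k-1}$ and to $h_{k+1}$ otherwise (a cyclic ``window rotation'' of $h_{k}$ on $\{h_{k+1},\dots,h_{k-1}\}$). In this model $g_{c}(I)=\sum_{k}\mathbf{1}[\,h_{k}(I)\ge k-c\,]$, the sum over those $k$ for which $(k,k-c)$ is an element of ${\cal Q}_{a,b}$. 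The completely analogous descriptions hold for rank toggles and for file toggles.

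The base case — homomesy of $g_{c}$ under rowmotion $\Phi={\cal T}_{(1,\dots,a)}=\Phi_{(a+b,\dots,2)}$, equivalently under promotion $\partial={\cal T}_{(a,\dots,1)}$ — is the technical core, and I expect it to be the main obstacle: it is exactly the kind of orbit identity that Propp–Roby found ``surprisingly non‑trivial'' already for $g=|\cdot|$. I would establish it via the standard encoding of ideals of ${\cal Q}_{a,b}$ as monotone staircase lattice paths, i.e.\ binary words of length $a+b$ with $a$ letters of one kind, under which promotion (equivalently rowmotion) has the orbit structure of one‑step cyclic rotation of the word, so that orbits are necklaces; the statistic $g_{c}$ becomes a ``diagonal‑crossing'' count on the word, and its sum over a full cyclic orbit can be evaluated in closed form. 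The honest shortcut is to reuse and lightly refine the orbit analysis behind Theorem \ref{PRthm} rather than redo it from scratch.

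Granting the base case, the transfer to arbitrary $\nu$ is clean and uniform. Within each of the families $\{{\cal T}_{\nu}\}$, $\{\Phi_{\nu}\}$, $\{\partial_{\nu}\}$, two moves connect all orderings up to conjugacy, exactly as in Striker–Williams: transposing two entries of $\nu$ whose fibers are non‑adjacent leaves the map unchanged, since the two toggles commute by Proposition \ref{toggcom}; and a cyclic shift of the word gives ${\cal T}_{(\nu(2),\dots,\nu(a),\nu(1))}=\sigma_{C_{\nu(1)}}\circ{\cal T}_{\nu}\circ\sigma_{C_{\nu(1)}}^{-1}$, a genuine conjugation (each $\sigma_{C_{k}}$ is a bijection). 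For a conjugation $\tau'=g\,\tau\,g^{-1}$ with $g=\sigma_{C_{\nu(1)}}$ the $\tau'$‑orbits are the $g$‑images of the $\tau$‑orbits, of the same sizes, so homomesy of $g_{c}$ transfers provided $g_{c}\circ g$ has the same orbit‑averages as $g_{c}$ under $\tau$. But $g_{c}\circ\sigma_{C_{\nu(1)}}-g_{c}$ equals $\mathbf{1}[\,p\in\sigma_{C_{\nu(1)}}(\cdot)\,]-\mathbf{1}[\,p\in\cdot\,]$, where $p$ is the unique element of $F_{c}$ lying in column $\nu(1)$ (and the difference is identically $0$ if there is no such element); writing ${\cal T}_{\nu}=A\circ\sigma_{C_{\nu(1)}}$ with $A$ a product of column toggles none of which touches column $\nu(1)$, we get $\mathbf{1}[\,p\in{\cal T}_{\nu}(I)\,]=\mathbf{1}[\,p\in\sigma_{C_{\nu(1)}}(I)\,]$, so along any ${\cal T}_{\nu}$‑orbit this difference telescopes to $0$ — it is $0$‑mesic. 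Hence all orbit‑sums, and thus averages, of $g_{c}$ are preserved under the conjugation. The same mechanism (with rank, resp.\ file, toggles in place of column toggles, using that fibers partition ${\cal Q}_{a,b}$) works inside $\{\Phi_{\nu}\}$ and $\{\partial_{\nu}\}$. Since $\Phi$ lies in $\{{\cal T}_{\nu}\}\cap\{\Phi_{\nu}\}$ and $\partial$ lies in $\{{\cal T}_{\nu}\}\cap\{\partial_{\nu}\}$, the base case propagates through all three families, and linearity over the $g_{c}$ then yields Theorem \ref{main} in full.
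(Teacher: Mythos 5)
The transfer‑by‑conjugation idea in your last paragraph is genuinely nice and, for statistics that decompose as $\pm$‑sums of single‑cell indicators $\mathbf{1}[p\in I]$, it does work: writing ${\cal T}_{\nu}=A\circ\sigma_{C_{\nu(1)}}$ with $A$ fixing column $\nu(1)$, the difference $f\circ\sigma_{C_{\nu(1)}}-f$ telescopes along ${\cal T}_{\nu}$‑orbits exactly as you say. That gives a route, different from the paper's, for propagating a base case through the whole family $\{{\cal T}_{\nu}\}$. The paper does not argue by conjugation at all: it instead builds the equivariant bijection $\alpha$ to winching on increasing sequences (Lemma~\ref{LemmaAlpha}, Corollary~\ref{bijection}), and then proves homomesy for \emph{all} $\nu$ uniformly via the tuple‑board/snake analysis and the necklace map $\cal K$ (Lemma~\ref{pp}, Lemma~\ref{map1}, Lemma~\ref{map}). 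That machinery is also what later carries over, with minor modification, to $J({\cal U}_a)$ and $J({\cal L}_a)$.

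However, there are two substantive problems with your proposal. First, you have mis‑stated what Theorem~\ref{main} asserts. It is not ``the span of the file‑cardinality functions $g_c=|F_c(\cdot)|$.'' The theorem's statistics are the column‑size indicators $g_{i,j}=\mathbf{1}[\,|C_i(I)|=j\,]$, packaged as the antipodal differences $d_{i,j}=g_{i,j}-g_{a+1-i,\,b-j}$ and the sums $s_j=\sum_i\mathbf{1}[\,|C_i(I)|+i=j\,]$. Unwinding, $s_j(I)=|R_j(I)|-|R_{j+1}(I)|$ is a difference of \emph{rank} cardinalities, and individual rank cardinalities are not homomesic even for ordinary rowmotion; so this is not a repackaging of Propp--Roby's file result, and neither the $d_{i,j}$ nor the $s_j$ can simply be ``reused'' from Theorem~\ref{PRthm}. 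Second — and this is the real gap — your base case is not supplied. You appeal to the binary‑word/necklace picture and say promotion ``has the orbit structure of one‑step cyclic rotation,'' but the naive monotone‑lattice‑path bijection is \emph{not} equivariant from promotion or rowmotion to rotation; producing a genuinely equivariant map (and then reading off the orbit‑sums of $d_{i,j}$ and $s_j$ on it) is precisely the technical content that the paper's snake/tuple‑board construction and the map $\cal K$ of Lemma~\ref{map1} supply. Without that construction, the ``diagonal‑crossing count in closed form'' step is an unjustified assertion, and the proposal establishes neither the orbit‑structure claim (Part~1) nor the homomesy of $d_{i,j}$ and $s_j$ (Part~2) for any single $\nu$, let alone all of them.
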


\begin{enumerate}

\item For any arbitrary natural number $a$ and $\nu$ a permutation on $[a]$, ${\cal T}_{\nu}$  partitions $J({\cal Q}_{a,b})$ to orbits.  The orbit structures of ${\cal T}_{\nu}$  on $J({\cal Q}_{a,b})$ is independent of choice of $\nu$. 
\item Consider  $I\in J({\cal Q}_{a,b})$. We have the following homomesy results: \begin{itemize}

\item Let $g_{i,j}, 1\leq i \leq a $ and $1\leq j\leq b$ be defined as follows:

\begin{equation}
g_{i,j}=\begin{cases}
1,& \mbox{if  } \vert C_{i}(I) \vert=j\\
0, &\mbox{otherwise.}
\end{cases}
\end{equation}

 For an arbitrary permutation $\nu$ of  $[a]$, $1\leq i \leq a $ and $0\leq j\leq b$, the function $d_{i,j}= g_{i,j}-g_{a{+}1{-}i, b{-}j}$ is 0-mesic in ${\cal T}_{\nu}$-orbits of $J({\cal Q}_{a,b})$.

\item For all $1 \leq i \leq a$, let

\begin{equation}
s_{i,j}=
\begin{cases} 1 & \mbox{ if }\vert C_{i}(I)\vert+i=j \\
0&\mbox{otherwise.}
\end{cases}
\end{equation}
 For any arbitrary permutation $\nu$ of ${[a]}$ and $1\leq j\leq b$, $s_{j}=\sum_{i=1}^{a}s_{i,j}$  is homomesic in  ${\cal T}_{\nu}$-orbits of $J({\cal Q}_{a,b})$. Moreover, the average of all $s_{j}$ along an orbit is constant and equal to $\frac{a}{a+b}$.
In other words, for all $j, l$,$~s_{l}-s_{j}$ is 0-mesic.
\end{itemize}

Any function $f:J({\cal Q})\rightarrow \mathbb{R}$ which is a linear combination of various $s_{i}$ and $d_{i}$ is homomesic
in ${\cal T}_{\nu}$-orbits of $J({\cal Q}_{a,b})$.

\end{enumerate}

Theorem \ref{main} introduces a different   family of permutations that produce the same orbit structure
as $\Phi$ and $\partial$; hence, it generalized Theorem \ref{SWthm}. It also generalizes Theorem \ref{PRthm} because it introduces a class of permutations and statistics whose triple with  $J({\cal Q}_{a,b})$  exhibit homomesy. Moreover, it will provide another proof for Theorem \ref{PRthm}. The main idea of our proof is the correspondence between comotion and \textbf{winching} (See Definition \ref{Winchdef}). We will define winching and also its  correspondence with comotion  in Section \ref{next}.  Then, we extend the definition of winching  to winching with lower bounds and winching with zeros. Studying these two variations, helps us  obtain homomesy results in $J({\cal U}_a)$ and $J({\cal L}_a)$.

\begin{theorem}\label{mainU}(Homomesy in $J({\cal U}_a)$)
{

Let $a$ be an arbitrary natural number and $\nu$ an arbitrary permutation of $[a]$. Consider ${\cal T}_{\nu}:J({\cal U}_a)\rightarrow J({\cal U}_a)$ as defined in Definition \ref{Comotion}. 
For each $i\in[2a]$ let $[i,2a]={i,i+1,\dots ,2a}$ and $f:[2a]\rightarrow \mathbb{R}$ a function  that has the same average in all $[i,2a]$ where $i$ is odd. Let $g:J({\cal U}_a)\rightarrow \mathbb{R}$ be defined as: $\forall I\in J({\cal U}_a), g(I)=\sum_{i=1}^{a}f(\vert C_i(I)\vert+2i+1)$.
Then, the triple $\langle J({\cal U}_a), {\cal T}_{\nu}, g\rangle$ exhibits homomesy.

}

\end{theorem}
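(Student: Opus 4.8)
The plan is to reduce Theorem~\ref{mainU} to the combinatorial model of ``winching with lower bounds'' that the introduction promises, and then to exploit the symmetry of that model along orbits. First I would set up the bijection between $J({\cal U}_a)$ and sequences of column heights: an ideal $I\in J({\cal U}_a)$ is determined by the tuple $(|C_1(I)|,\dots,|C_a(I)|)$, and the defining inequalities $i\geq a{+}1{-}j$ of ${\cal U}_a$ translate into lower bounds on each $|C_i(I)|$ together with the monotonicity constraint inherited from being an ideal in ${\cal Q}_{a,a}$. Under this encoding, the comotion action ${\cal T}_\nu$ (which cycles toggles column by column, in the order dictated by $\nu$) should correspond exactly to a ``winching'' move on the height tuples, in the same way that comotion on $J({\cal Q}_{a,b})$ corresponds to ordinary winching; the lower bounds coming from ${\cal U}_a$ are precisely why one needs the ``winching with lower bounds'' variant rather than plain winching.

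Next I would invoke (or re-derive in this restricted setting) the key structural fact about winching: along any orbit of the winching action, each coordinate slot — indexed here by the column $i$ — takes each of its admissible values the same number of times, or more precisely the multiset of values appearing in slot $i$ across an orbit is determined independently of $\nu$ and is symmetric in a controlled way. This is exactly the mechanism behind part~(1) of Theorem~\ref{main} and the $0$-mesy of the $d_{i,j}$; I expect winching-with-lower-bounds to enjoy the analogous property, with the value $|C_i(I)|$ ranging over $\{a{+}1{-}i,\dots,a\}$ and the relevant combinatorial quantity being $|C_i(I)|+2i+1$, which lands in an odd-indexed interval $[i',2a]$ of the type appearing in the hypothesis on $f$. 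The statistic $g(I)=\sum_{i=1}^a f(|C_i(I)|+2i+1)$ is then a sum of single-column statistics, so it suffices to prove homomesy for each summand $f(|C_i(I)|+2i+1)$ separately and add; by linearity (the proposition right after the definition of homomesy) this reduces everything to understanding, for a fixed column $i$, the orbit-average of $f$ evaluated at $|C_i|+2i+1$.

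The heart of the argument is therefore: for fixed $i$, as $I$ runs over one ${\cal T}_\nu$-orbit, the value $|C_i(I)|+2i+1$ runs over some multiset $M_i$ supported on an odd-indexed interval $[i',2a]$, with $M_i$ (as a multiset, up to scaling by orbit length) independent of $\nu$ and, crucially, ``balanced'' on that interval in the sense that the orbit-average of any function constant on such intervals is forced. Because $f$ is assumed to have the same average on every $[i,2a]$ with $i$ odd, the orbit-average of $f(|C_i|+2i+1)$ collapses to that common average regardless of the precise shape of $M_i$ — this is the step where the specific hypothesis on $f$ does its work, and it mirrors how $s_j$ in Theorem~\ref{main} averages to the $\nu$-independent constant $\tfrac{a}{a+b}$. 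Summing over $i$ gives that $g$ has a $\nu$-independent orbit-average, i.e.\ homomesy.

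The main obstacle I anticipate is establishing the precise form of the per-column value multiset $M_i$ for winching-with-lower-bounds and showing it is ``balanced'' on an odd-indexed interval of the stated type; this requires a careful analysis of how the lower-bound constraints interact with the cyclic toggle dynamics, and it is where the reduction to ${\cal Q}_{a,a}$ and the symmetry argument underlying Theorem~\ref{main}(1) must be adapted rather than quoted verbatim. Once that structural lemma about winching with lower bounds is in hand, the passage to homomesy of $g$ is a short linearity-plus-hypothesis argument. I would organize the write-up so that the winching-with-lower-bounds lemma is proved first (in Section~\ref{next}, alongside the basic correspondence), and Theorem~\ref{mainU} follows in Section~\ref{results} as a corollary of that lemma together with the assumption on $f$.
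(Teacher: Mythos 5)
Your setup is right — reducing to winching with lower bounds via the column-height encoding (this is the bijection $\beta$ the paper constructs, with lower bounds $(1,3,\dots,2a{-}1)$) is exactly the paper's route, and you correctly identify that the hypothesis on $f$ must interact with those odd lower bounds. But the central reduction you propose, ``it suffices to prove homomesy for each summand $f(|C_i(I)|+2i+1)$ separately and add,'' is false, and it is where the argument breaks.

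The individual summands are \emph{not} homomesic. Take $a=2$, so ${\cal U}_2$ has five ideals, and with $\nu=(1,2)$ the image $\beta(J({\cal U}_2))=S'_{2,4}$ (lower bounds $(1,3)$) splits into two $\underline{W}_\nu$-orbits: $\{(1,3),(2,4),(3,4)\}$ and $\{(1,4),(2,3)\}$. The values in slot $1$ are $\{1,2,3\}$ over the first orbit and $\{1,2\}$ over the second; for $f$ with $f(1)=f(2)=1$, $f(3)=0$, $f(4)=2$ (which satisfies the hypothesis $f(1)+f(2)=f(3)+f(4)$), the slot-$1$ averages are $2/3$ and $1$. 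So the multiset $M_i$ you posit is not ``balanced'' column by column, and the linearity step cannot be run at the level of single columns. The full sum $g$ \emph{is} homomesic here (both orbit-averages equal $2$), but only because deficits in one column are compensated by surpluses in another along the same orbit.

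What actually makes the compensation work is the paper's snake decomposition of the tuple cylinder: writing the whole orbit as a board whose rows are the successive iterates, one shows every cell is covered by a ``snake'' carrying a consecutive run $l_q, l_q{+}1,\dots, 2a$ for some lower bound $l_q$, and these snakes partition the board. Each snake travels through several columns as it increases, so the unit of decomposition is the snake, not the column. Since $f$ has the same average on every interval $[l_q, 2a]$ with $l_q$ odd, the average of $f$ over each snake is the common constant, hence so is the average of $f$ over the entire board, which equals the orbit-average of $g$. Your structural lemma should therefore be about snakes in the $\underline{W}_\nu$ tuple cylinder (this is Lemma in the paper's Section on winching with lower bounds and is proved by the same argument as Lemma~\ref{Wsnake}), not about per-column multisets.
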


\begin{theorem}\label{zeromain}(Homomesy in $J({\cal L}_a)$)

\end{theorem}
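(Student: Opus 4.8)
The plan is to treat Theorem~\ref{zeromain} exactly as Theorems~\ref{main} and~\ref{mainU} are treated, using the third variant of winching --- \emph{winching with zeros} --- in place of ordinary winching and winching with lower bounds. First I would fix the combinatorial model: an ideal $I\in J({\cal L}_a)$ is completely recorded by the tuple of column heights $(|C_1(I)|,\dots,|C_a(I)|)$, where $|C_c(I)|\in\{0,1,\dots,c\}$ (column $c$ of ${\cal L}_a$ being a chain of length $c$) and consecutive entries satisfy the staircase inequality $|C_{c-1}(I)|\ge\min(|C_c(I)|,\,c-1)$ inherited from the ideal condition in ${\cal Q}_{a,a}\supseteq{\cal L}_a$. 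Viewing ${\cal L}_a$ as ${\cal Q}_{a,a}$ with the cells $(c,j)$, $j>c$, deleted, each deleted cell sits at the \emph{top} of its column and is therefore never in an ideal; in the length-$a$ winching system these deleted slots are exactly the positions one freezes to the value $0$. This is the content of the dictionary ``comotion on $J({\cal L}_a)$ $\leftrightarrow$ winching with zeros'' that Section~\ref{next} establishes, and under it ${\cal T}_\nu$ becomes winching with zeros performed in the column order $\nu$.

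Second, the part asserting that ${\cal T}_\nu$ partitions $J({\cal L}_a)$ into orbits whose multiset of sizes is independent of $\nu$ (and, if asserted, coincides with the rowmotion orbit structure on this minuscule poset) would follow from the $\nu$-independence of the orbit structure of winching with zeros, which Section~\ref{next} proves by the same conjugation/commutation argument, built on Proposition~\ref{toggcom}, that underlies Theorem~\ref{SWthm}.

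Third, for the homomesy part I would isolate a single winching-with-zeros cycle and prove the key lemma: along one ${\cal T}_\nu$-orbit, the shifted $c$-th column height (with a shift analogous to the $2i+1$ appearing in Theorem~\ref{mainU}) takes each value of a prescribed truncated interval $[i,2a]$ equally often, where $i$ is odd. Summing an admissible statistic $f$ --- one required to have the same average on every $[i,2a]$ with $i$ odd, exactly as in Theorem~\ref{mainU} --- over an orbit then reproduces, column by column, that common average, and the columns' contributions add to a constant that depends on neither the orbit nor $\nu$; this is precisely what the hypothesis on $f$ is tuned to make happen. Specializing $f$ to an affine function recovers homomesy of $|I|=\sum_c|C_c(I)|$, reproving and generalizing the known minuscule-poset instance, and the linearity-of-homomesy remark extends the conclusion to all linear combinations.

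The main obstacle is the one already present for ${\cal Q}_{a,b}$ and ${\cal U}_a$, now compounded by the frozen zeros: the coordinates of the height tuple are \emph{not} independent under winching --- they are coupled through the staircase inequalities, and the zeros interrupt the naive cyclic rotation of values inside a column --- so one cannot simply assert that ``each column cycles through all of its values.'' I would resolve this by passing to the non-crossing lattice-path picture of an ideal and tracking individual particles: show that winching with zeros moves particles along well-defined closed trajectories, that a particle contributes one unit to $|C_c(I)|$ precisely while it occupies column $c$, and that over a full orbit each trajectory is traversed the same number of times in each phase, so the per-column value distribution is exactly the claimed truncated-uniform distribution on $[i,2a]$. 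Matching the frozen cells of ${\cal L}_a$ to the correct truncation of each interval in this bookkeeping is the delicate point, and it is exactly the job the ``with zeros'' refinement of winching from Section~\ref{next} is built to do.
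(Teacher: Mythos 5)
Your overall architecture matches the paper's: encode an ideal of ${\cal L}_a$ by its column heights, exhibit a bijection $\gamma$ to the winching-with-zeros space $S_a$ commuting with the toggle action (the paper's Lemma~\ref{gamma} and Corollary~\ref{bijection2}), and then transport homomesy results back. That part is sound. However, the homomesy mechanism you describe is imported wholesale from the ${\cal U}_a$ case and does not apply here. For ${\cal U}_a$ the bijection $\beta$ carries a shift ($\beta_j(I)=|C_{a+1-j}(I)|+2j-1$), the lower bounds $(1,3,\dots,2a-1)$ truncate the snakes to intervals $[l_i,m]$, and the admissible statistics are precisely those $f$ with equal averages on each odd-indexed tail $[i,2a]$ --- that is the content of Theorem~\ref{mainU} and Theorem~\ref{winchlowthm}. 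But for ${\cal L}_a$ the bijection is simply $\gamma_j(I)=|C_{a+1-j}(I)|$, with no additive shift and no lower bounds: the values live in $\{0,1,\dots,a\}$ with zeros allowed at the start of the tuple. The ``key lemma'' you propose (each shifted column height takes each value of a truncated interval $[i,2a]$, $i$ odd, equally often) is the wrong lemma here, and the statement of Theorem~\ref{zeromain} itself makes no reference to such intervals or such a class of test functions.

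The correct mechanism, which is what the paper's Theorem~\ref{zerowinchthm} and Lemma~\ref{WZ} deliver, is of a different flavor. In a $2a\times a$ tuple cylinder for $\mbox{WZ}_\nu$, the snakes start at a row containing a $1$, traverse the full value range $1,2,\dots,a$ (no truncation), and then Lemma~\ref{WZ} shows that consecutive snake starts are exactly two rows apart, that successive snake maps are related by the crawl map, and that every cell not covered by a snake is a zero. So exactly half the cells are zero, each value $j\in\{1,\dots,a\}$ appears exactly once in each of the $a$ snakes, hence $a$ times in $2a$ rows, and $f_j$ has average $1/2$ on every orbit independently of $j$ and of $\nu$. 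That is what makes each $s_j$ homomesic with a common average, hence $s_l-s_j$ $0$-mesic. Your particle-tracking picture could plausibly be massaged into something equivalent to the paper's snake/crawl analysis, but as written it is aimed at establishing the ${\cal U}_a$-style truncated-uniform distribution per column, which is not the distribution that actually arises for ${\cal L}_a$, and the claim ``each trajectory is traversed the same number of times in each phase'' is not yet a proof. To close the gap you would need to (a) state the right combinatorial fact --- every nonzero value occupies exactly half the rows of the cylinder in each column --- and (b) prove it, e.g.\ via the crawl dynamics on snake maps and the rotation on binary necklaces ${\cal B}_a$, as the paper does.

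One further caution: you also appeal to a ``conjugation/commutation argument built on Proposition~\ref{toggcom}'' for the $\nu$-independence of the orbit structure. The paper instead obtains this through the correspondence ${\cal F}:M_a\to{\cal B}_{a}$ between snake maps and $2$-colored binary words together with the crawl-to-rotation conjugacy (Lemmas~\ref{Map} and~\ref{getback}), which is a genuinely different argument than the toggle-commutation proof of Theorem~\ref{SWthm}; relying on the latter here would need separate justification.
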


Let $a$ be an arbitrary natural number and  $\nu$  an arbitrary permutation of $[a]$ and ${\cal T}_{\nu}: J({\cal L}_a)\rightarrow J({\cal L}_a)$ be defined as in Definition \ref{Comotion}. We have, 
\begin{enumerate}

\item The orbit structures of ${\cal T}_{\nu}$  on $J({\cal L}_{a,b})$ is independent from choice of $\nu$. 

\item For any $1\leq i\leq a$ and $0\leq j\leq a$ we define $s_{i,j}: J({\cal L}_a)\rightarrow \mathbb{R}$ as follows:

\begin{equation}
s_{i,j}=
\begin{cases} 1 & \mbox{ if }\vert C_{i}\vert=j \\
0&\mbox{otherwise.}
\end{cases}
\end{equation}
For any $ 1\leq j\leq a$ $s_{j}=\sum_{i=1}^{a}s_{i,j}$  is homomesic. Moreover, the average of all $s_{j}$ along any ${\cal T}_{\nu}$-orbit of $J({\cal L}_{a,b})$ is the same.
In other words, for all $j, l$ $~s_{l}-s_{j}$ is 0-mesic.

Moreover, any function $f:J({\cal L}_a)\rightarrow \mathbb{R}$ which is a linear combination of various $s_{i}$ is homomesic
in ${\cal T}_{\nu}$-orbits of $J({\cal L}_a)$.

\end{enumerate}

In Section \ref{next} of this paper we introduce the permutation winching on the set of increasing sequences of length $k$.
We show that there is a natural equivariant bijection between the set of ideals under comotion and the set of increasing sequences under winching.
%Later in this section we prove part 3 of Theorem \ref{main}.

Then, we introduce two different variations of winching and their correspondence with comotion in $J({\cal U}_a)$ and $J({\cal L}_a)$.

In Section \ref{results} we will use the Theorems \ref{main}, \ref{mainU} and \ref{zeromain} to show homomesy of some functions in the orbit structure produced by comotion in $J({\cal Q}_{a,b})$, $J({\cal U}_a)$ and $J({\cal L}_a)$.
  
In Section \ref{winch} we will prove homomesy  of a class of statistics when the permutation is winching and two different variations of it. These results have intrinsic interest because they are
instances of homomesy. Moreover, 
by the correspondence between winching and comotion proof of 
Theorems \ref{main}, \ref{mainU}, and \ref{zeromain} will be obtained. 
 
%%%%%%%%%%%%%%%%%%%%%%
%%%%%%%%%%%%%%%%%%%%%%Chapter one
%%%%%%%%%%%%%%%%%%%%%%

\section{Comotion, winching and their correspondence }\label{next}

In the previous section, we defined the action of comotion on the set of order ideals of a poset. 
In this section, we define winching 
and show a correspondence between winching on increasing sequences and comotion on $J({\cal Q}_{a,b})$. 
Then, we define winching with lower bounds and winching with zeros. The former corresponds to comotion on $J({\cal U}_a)$ and the later corresponds to comotion on  $J({\cal L}_a)$. 

\begin{defin}\label{Winchdef}
Let $S_{k,m}$ be the set of all $k$-tuples $x = (x_1,\dots,x_k)$  satisfying $0< x_{1}< x_{2}< \dots< x_{k}< m{+}1$.
We define the map $W_{i}:S_{k,m}\rightarrow S_{k,m}$, called winching on index $i$, by $W_i(x) = y = (y_{1},y_{2},\dots ,y_{k})$, where
$y_{j}=x_{j}$ for $i\neq j$, and

\begin{equation}
 y_{i} = \begin{cases}
 x_{i}+1, & \text{if }  x_{i}+1 < x_{i+1} .\\
x_{i-1}+1, & \text{otherwise}.
  \end{cases}
\end{equation}

We assume that always $x_{0}=0$ and $x_{k{+}1}=m{+}1$. 
\end{defin}

\begin{exm}
Let $\nu =(2,3,1,4)$ and $x\in S_{4,7}$ be $x=(2,3,5,7)$. Then, $W_{\nu}(x)=(1,4,6,7)$.
\end{exm}

\begin{lemma}\label{LemmaAlpha}
There is a bijection $\alpha:J({\cal Q}_{a,b})\rightarrow S_{a,a{+}b}$ such that for any $I\in J({\cal Q}_{a,b})$, $\alpha(\sigma_{C_j}(I))=W_{j}(\alpha(I))$.
\end{lemma}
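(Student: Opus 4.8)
\textbf{Proof proposal for Lemma~\ref{LemmaAlpha}.}

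The plan is to construct $\alpha$ explicitly by reading off, for each column $C_i$ of the square lattice ${\cal Q}_{a,b}$, how far up the ideal $I$ reaches in that column, and then to check that toggling an entire column corresponds exactly to winching the associated coordinate. Concretely, for $I\in J({\cal Q}_{a,b})$ I would set $h_i = |C_i(I)|$, the height of $I$ in column $i$; since $I$ is an order ideal, the sequence $h_1,\dots,h_a$ is weakly decreasing and $0\le h_i\le b$. The issue is that weakly decreasing sequences do not directly match \emph{strictly} increasing tuples in $S_{a,a+b}$, so the first step is to apply the standard ``staircase'' shift: define $x_i = h_{a+1-i} + i$ (or an equivalent shift — one has to fix indices carefully so the reversal lines up with the convention in Definition~\ref{Winchdef}). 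Because $h_{a+1-i}$ is weakly increasing in $i$ and we add the strictly increasing quantity $i$, the tuple $x=(x_1,\dots,x_a)$ is strictly increasing; and $0 < x_1$ while $x_a = h_1 + a \le b + a = (a+b)$, so $x_a < (a+b)+1$. Thus $x\in S_{a,a+b}$. One checks this map is a bijection by describing the inverse: given $x\in S_{a,a+b}$, recover $h_{a+1-i} = x_i - i$, note these values lie in $[0,b]$ and are weakly decreasing, and reconstruct the unique ideal of ${\cal Q}_{a,b}$ with those column heights.

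The second and main step is the equivariance $\alpha(\sigma_{C_j}(I)) = W_{\text{(corresponding index)}}(\alpha(I))$. Here I would unwind the definition of $\sigma_{C_j}$: it toggles the elements $(j,1) < (j,2) < \dots < (j,b)$ of column $j$ in increasing order. The key observation is that $\sigma_{C_j}$ changes the height $h_j$ by at most $1$ and leaves all other $h_i$ fixed: working through the cases of the toggle definition, if $(j, h_j+1)$ can be added (i.e. $h_j < b$ and $h_j+1 \le h_{j-1}$, so that adding it keeps the ideal property) then the composition of toggles adds exactly that one element and $h_j \mapsto h_j + 1$; otherwise the sequence of toggles strips column $j$ down and re-fills it to the largest admissible level, which turns out to be $h_{j+1}$, giving $h_j \mapsto h_{j+1}$. (The boundary conventions $h_0 = b$ and $h_{a+1}=0$ mirror the conventions $x_0=0$, $x_{k+1}=m+1$ in the winching definition once the reversal is applied.) Translating this through the shift $x_i = h_{a+1-i}+i$ shows that the effect on the tuple $x$ is precisely: increment coordinate $i$ (where $i = a+1-j$) to $x_i+1$ if $x_i + 1 < x_{i+1}$, and otherwise set it to $x_{i-1}+1$ — which is exactly $W_i$.

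The main obstacle I anticipate is purely bookkeeping: getting the index reversal and the additive shift consistent with the paper's conventions (the directions of $\le$ in ${\cal Q}_{a,b}$, which column is $C_1$, the order in which $\sigma_{C_j}$ toggles, and the $x_0=0$, $x_{k+1}=m+1$ sentinels). A clean way to handle this is to verify the covering behavior of $\sigma_{C_j}$ on column heights once and for all as a small lemma (``$\sigma_{C_j}$ sends $h_j$ to $\min(h_j+1, h_{j-1})$ if $h_j + 1 \le h_{j-1}$ and to $h_{j+1}$ otherwise'') and then let the shift do the rest mechanically; the strict monotonicity of $x$ is exactly what encodes the ``admissible to add'' condition $h_{j-1} \ge h_j + 1$, so the two cases of the toggle and the two cases of winching are forced to coincide. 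I would also double-check one small example (e.g. matching the example $x=(2,3,5,7)\in S_{4,7}$ against the corresponding ideal in ${\cal Q}_{4,3}$ or ${\cal Q}_{3,4}$) to make sure the conventions are aligned before writing the general argument.
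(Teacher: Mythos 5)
Your construction of $\alpha$ --- set $x_i = |C_{a+1-i}(I)| + i$ --- and your analysis of how $\sigma_{C_j}$ acts on the column height $h_j = |C_j(I)|$ (increment to $h_j+1$ when $h_j+1\le h_{j-1}$, otherwise drop to $h_{j+1}$, with sentinel conventions $h_0=b$, $h_{a+1}=0$) are exactly the paper's bijection and case split, and your translation of that case split into the winching rule via $x_{a+1-j}=h_j+(a+1-j)$ is the same computation the paper does, so your proposal is essentially the paper's proof. Two minor slips in your prose, neither of which affects the argument: the definition $\sigma_{C_j}=\sigma_{x_1}\circ\cdots\circ\sigma_{x_m}$ with $x_1<\cdots<x_m$ means the toggles are applied from the \emph{top} of the column downward, not ``in increasing order''; and $\sigma_{C_j}$ can decrease $h_j$ by more than one (all the way down to $h_{j+1}$), so ``changes the height by at most $1$'' is true only in the increasing direction.
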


\begin{proof}
Consider $I\in J({\cal Q}_{a,b})$, we define $\alpha(I)=(\alpha_{1},\dots \alpha_{a})$ as follows: for any $1\leq j\leq a$, we have $ \alpha_{j}(I)=\vert  C_{a{+}1{-}j}(I)   \vert +j$. Since $I\in J({\cal Q}_{a,b})$, for any $j_{1}<j_{2}$, take $\vert C_{j_{1}}(I) \vert \geq \vert  C_{j_{2}}(I) \vert$. Therefore, $\alpha(I)$ is an increasing sequence. 

Let $C_{j}$ be $\{v_{1},v_{2} ,\dots v_{b}\}$; $v_{i}=(j,i)$, and assume $\vert C_{j}(I)\vert=l$.  We have, $ n>l{+}1$, $\sigma_{v_{n}}(I)=I$, and for $n=l{+}1, \sigma_{v_{n}}(I)=I\cup \{v_{n}\}$ if and only if $\vert C_{j{-}1}\vert \geq l{+}1$. Also, $n<l, \sigma_{v_{n}}(I)=I- \{v_{n}\}$ if and only if $\vert C_{j{+}1}(I)\vert\leq n{-}1$. 
For boundary cases, we assume $\vert C_0\vert=b  $ and $\vert C_b\vert=0 $.
Letting $K=\sigma_{C_{j}}(I)$ we will have,

\begin{equation}
 {C_{j}}(K) = \begin{cases}
C_j(I)\cup \{v_{l{+}1}\}, & \text{if } \vert C_{j{-}1}(I)\vert \geq l{+}1 .\\
C_{j}(I)-\{v_{l},v_{l{-}1},\dots, v_{p+1}\} (p=\vert C_{j{+}1}(I)\vert), & \text{otherwise}.
  \end{cases}
\end{equation}

\begin{equation}
\Leftrightarrow  \vert {C_{j}}(K) \vert +a{+}1{-}j= \begin{cases}
l{+}1 +a{+}1{-}j, & \text{if } \vert C_{j{-}1}(I)\vert + a{-}j{+}2\geq l{+}1+ a{-}j{+}2 .\\
\vert C_{j{+}1}(I)\vert +a{+}1{-}j, & \text{otherwise}.
  \end{cases}
\end{equation}

\begin{equation}
\Leftrightarrow \alpha_{a{+}1{-}j}(\sigma_{C_{j}}(I)) = \begin{cases}
\alpha_{a{-}j{+}1}(I){+}1, & \text{if } \alpha_{a{-}j{+}2}(I) > \alpha_{a{-}j{+}1}(I)+1 .\\
\alpha_{a{-}j}(I)+1, & \text{otherwise}.
  \end{cases}
\end{equation}

\begin{equation}
\Leftrightarrow \alpha_{a{+}1{-}j}(\sigma_{C_{j}}(I))= W_{a{+}1{-}j}( \alpha(I)).
\end{equation}
\end{proof}

\begin{con}\label{bijection}
Consider an arbitrary natural number $a$, $\nu$ a permutation of  $[a]$, and for any $x\in S_{a,a+b}$, let $W_{\nu}(x)= W_{\nu(a)}\circ W_{\nu(a{-}1)}\circ\dots \circ W_{\nu(1)}(x)$.
 The bijection $\alpha$ introduced in Definition \ref{LemmaAlpha} satisfies the following property: $\alpha({\cal T_{\nu}}(I))=W_{\nu} (\alpha(I))$.
\end{con}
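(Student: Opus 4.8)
The plan is to deduce Corollary \ref{bijection} from Lemma \ref{LemmaAlpha} by a straightforward induction on the length of the composition defining ${\cal T}_\nu$ and $W_\nu$. The key observation is that Lemma \ref{LemmaAlpha} gives the single-step intertwining $\alpha(\sigma_{C_j}(I)) = W_j(\alpha(I))$ for \emph{every} $j \in [a]$ and \emph{every} $I \in J({\cal Q}_{a,b})$, i.e. as an identity of functions $\alpha \circ \sigma_{C_j} = W_j \circ \alpha$ on all of $J({\cal Q}_{a,b})$. Since ${\cal T}_\nu = \sigma_{C_{\nu(a)}} \circ \cdots \circ \sigma_{C_{\nu(1)}}$ and $W_\nu = W_{\nu(a)} \circ \cdots \circ W_{\nu(1)}$ are built by composing these same building blocks in the same order, the conjugation relation simply propagates.

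Concretely, I would argue as follows. Fix $a$, $b$ and a permutation $\nu$ of $[a]$, and for $0 \le k \le a$ set ${\cal T}^{(k)} = \sigma_{C_{\nu(k)}} \circ \cdots \circ \sigma_{C_{\nu(1)}}$ and $W^{(k)} = W_{\nu(k)} \circ \cdots \circ W_{\nu(1)}$, with the convention that ${\cal T}^{(0)}$ and $W^{(0)}$ are the identity maps on $J({\cal Q}_{a,b})$ and $S_{a,a+b}$ respectively. I claim that $\alpha \circ {\cal T}^{(k)} = W^{(k)} \circ \alpha$ for all $0 \le k \le a$. The base case $k=0$ is the trivial identity $\alpha = \alpha$. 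For the inductive step, assume $\alpha({\cal T}^{(k)}(I)) = W^{(k)}(\alpha(I))$ for every $I$. Then, writing $I' = {\cal T}^{(k)}(I)$ and applying Lemma \ref{LemmaAlpha} with index $j = \nu(k+1)$ to the ideal $I'$,
\begin{equation}
\alpha\bigl({\cal T}^{(k+1)}(I)\bigr) = \alpha\bigl(\sigma_{C_{\nu(k+1)}}(I')\bigr) = W_{\nu(k+1)}\bigl(\alpha(I')\bigr) = W_{\nu(k+1)}\bigl(W^{(k)}(\alpha(I))\bigr) = W^{(k+1)}\bigl(\alpha(I)\bigr),
\end{equation}
which is exactly the claim for $k+1$. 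Taking $k = a$ gives $\alpha({\cal T}_\nu(I)) = W_\nu(\alpha(I))$ for all $I \in J({\cal Q}_{a,b})$, as desired. The fact that $\alpha$ is a bijection is already part of Lemma \ref{LemmaAlpha}, so nothing further is needed there.

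There is really no serious obstacle here; the statement is a formal consequence of the single-toggle case. The only point requiring minimal care is that Lemma \ref{LemmaAlpha} must be invoked at the intermediate ideal ${\cal T}^{(k)}(I)$ rather than at $I$ — this is legitimate precisely because the lemma is uniform over all ideals and all column indices. One might also remark that, as a byproduct, this shows ${\cal T}_\nu$ is a genuine permutation of $J({\cal Q}_{a,b})$ conjugate (via $\alpha$) to the permutation $W_\nu$ of $S_{a,a+b}$, a fact that will be used when transferring orbit-structure and homomesy statements between the two settings.
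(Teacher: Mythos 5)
Your proof is correct, and it is precisely the argument the paper has in mind: the paper states this as a corollary with no proof because it follows by composing the single-step intertwining $\alpha\circ\sigma_{C_j}=W_j\circ\alpha$ from Lemma~\ref{LemmaAlpha} across all $a$ factors, which is exactly your induction. Your remark that $\alpha$ must be a bijection to make this a conjugacy is also implicitly what the paper relies on downstream, so no gap here.
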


\begin{theorem}\label{winchthm}
Consider a natural number $k$ and an arbitrary permutation $\nu$ of $[k]$. With $W_{\nu}: S_{k,m}\rightarrow S_{k,m}$ defined as above we will have,
\begin{enumerate}

\item $W_{\nu}^m(x)=x$ for all $x\in S_{k,m}$.
\item The orbit structure that  winching produces on the set $S_{k,m}$ is the
same as the orbit structure for rotation acting on the set of $2$-colored necklaces with $k$ white beads and $n-k$ black beads, and hence independent of choice of $\nu$.
\footnote{The definition of rotation acting on the set of $2$-colored necklaces is presented in Section \ref{winch} (Definition \ref{Neck}).}
 (The orbit structure of necklaces if a classical  problem in Combinatorics and the solution is a result of applying P$\acute{o}$lya's Theorem \cite{Polya}.)

\item The following functions (and any linear combination of them) are homomesic in $W_{\nu}$-orbits of $S_{k,m}$.

\begin{itemize}

\item Let $g_{i,j}:S_{k,m}\rightarrow \mathbb{R}, 1\leq i \leq k $ and $1\leq j\leq m$ be defined as follows:

\begin{equation}
g_{i,j}(x)=\begin{cases}
1,& \mbox{if  } x_{i}=j\\
0, &\mbox{otherwise.}
\end{cases}
\end{equation}

For any arbitrary $1\leq i \leq k $ and $1\leq j\leq m$, the function $d_{i,j}= g_{i,j}-g_{k{+}1{-}i, m{+}1{-}j}$ is 0-mesic in  $W_{\nu}$-orbits of $S_{k,m}$.

\item  For an arbitrary $1\leq  j\leq m$, let $f_j:S_{k,m}\rightarrow \mathbb{R}$ be defined by:

\begin{equation}
f_{j}(x)=\begin{cases}
1,& \mbox{if } j\in x\\
0,& \mbox{otherwise.}
\end{cases}
\end{equation}

For any $1\leq j\leq m$, the triple $\langle S_{k,m}, W_{\nu}, f_{j}\rangle$ is homomesic and the average of $f_j$ along $W_{\nu}$ orbits is $k/m$.
\end{itemize}
\end{enumerate}
\end{theorem}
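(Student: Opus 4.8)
The plan is to reduce all three parts to a single combinatorial model: encode an increasing sequence $x=(x_1,\dots,x_k)\in S_{k,m}$ as a binary necklace (a cyclic binary word of length $m$) by placing a white bead in positions $x_1,\dots,x_k$ and black beads elsewhere, reading positions cyclically modulo $m$. First I would show that a single winch $W_i$ corresponds, under this encoding, to moving the $i$-th white bead clockwise past exactly one black bead — that is, if the slot immediately after $x_i$ is black ($x_i+1<x_{i+1}$), the bead at $x_i$ moves to $x_i+1$; otherwise it must ``wrap around'' via the rule $y_i=x_{i-1}+1$, which in the cyclic picture is precisely the same local move once we interpret the convention $x_0=0$, $x_{k+1}=m+1$ as identifying position $0$ with position $m$. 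The key point is that after this move the tuple is still strictly increasing as a sequence in $S_{k,m}$ exactly when the $i$-th white bead has not jumped over another white bead, and the definition of $W_i$ is engineered so that it never does: it always swaps a white bead with the nearest black bead clockwise among the ``free'' slots available to index $i$. Making this bijection and this local-move correspondence precise is the technical heart of the argument; I expect this to be the main obstacle, because one must carefully handle the boundary/wrap-around cases and verify that $W_i$ is an involution-free shift (not a transposition of labels) that matches bead motion rather than just set motion.

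Granting the correspondence, part (1) is immediate: performing the full composition $W_\nu$ once moves \emph{each} of the $k$ white beads clockwise by one black bead (in the order dictated by $\nu$, but since distinct white beads occupy distinct positions these moves are independent up to relabeling), so $W_\nu$ acts on necklaces as a single rotation by one step; iterating $m$ times returns every bead to its start, giving $W_\nu^m=\mathrm{id}$. For part (2), the orbit of a necklace under rotation-by-one has size equal to the order of that rotation, so the orbit structure of $W_\nu$ is exactly that of the cyclic group $\Z/m\Z$ acting by rotation on $2$-colored necklaces with $k$ white and $m-k$ black beads; this is manifestly independent of $\nu$, and its multiset of orbit sizes is the classical count obtained from Burnside/P\'olya, matching the citation \cite{Polya}. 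One subtlety to address: I must confirm that the equivariant bijection respects orbits as \emph{sets} even though $\nu$ permutes the intermediate steps — this follows because $W_\nu$, regardless of $\nu$, always equals ``rotate every white bead by one,'' so the end-to-end map is $\nu$-independent as a permutation of $S_{k,m}$, not merely up to orbit structure.

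For part (3), homomesy becomes a statement about averages over a rotation orbit. For $f_j$: the value $f_j(x)$ is the indicator that position $j$ of the necklace is white; as we rotate through a full orbit of size $\ell$ (a divisor of $m$), position $j$ is white exactly $(k/m)\cdot\ell$ times, since in a necklace that is periodic with period $\ell$ the white beads are equidistributed among the $m$ positions in the appropriate sense — more carefully, summing $f_j$ over the orbit counts, over the $\ell$ rotations, how often a white bead lands in slot $j$, which by periodicity equals $\ell k/m$; dividing by $\ell$ gives the average $k/m$, independent of the orbit. Hence $\langle S_{k,m},W_\nu,f_j\rangle$ is homomesic with average $k/m$. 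For $d_{i,j}=g_{i,j}-g_{k+1-i,\,m+1-j}$: here I would use the complementation symmetry $x\mapsto \bar x$ defined by $\bar x_i = m+1-x_{k+1-i}$, which sends $S_{k,m}$ to itself, intertwines $g_{i,j}$ with $g_{k+1-i,m+1-j}$, and (from the bead picture) is equivariant with respect to $W_\nu$ up to reversing the rotation direction — so it carries $W_\nu$-orbits to $W_\nu$-orbits of the same size. Then $\sum_{x\in\mathcal O} g_{i,j}(x) = \sum_{x\in\mathcal O} g_{i,j}(\bar x) = \sum_{x\in\mathcal O} g_{k+1-i,m+1-j}(x)$, so $d_{i,j}$ sums to zero on every orbit, i.e. it is $0$-mesic. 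Finally, closure under linear combinations is the already-stated Proposition on linear combinations of homomesic functions. The one delicate step in part (3) is justifying the complementation map's equivariance; I would prove it directly from Definition \ref{Winchdef} by checking $W_i(\bar x)=\overline{W_{k+1-i}(x)}$, or more cleanly by appealing to the bead-rotation picture where reversing bead labels and direction is transparent.
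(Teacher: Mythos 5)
The central idea of your proposal — encode $x\in S_{k,m}$ as a length-$m$ necklace by placing white beads at positions $x_1,\dots,x_k$ and claim $W_\nu$ acts as rotation-by-one, hence is $\nu$-independent as a permutation — is false, and this breaks all three parts. A small counterexample: take $k=2$, $m=4$, $x=(2,3)$. With $\nu=(1,2)$ (apply $W_1$ then $W_2$) one computes $W_1(2,3)=(1,3)$ (since $3\not<3$, reset to $x_0+1$), then $W_2(1,3)=(1,4)$; so $W_{(1,2)}(2,3)=(1,4)$. With $\nu=(2,1)$ one computes $W_2(2,3)=(2,4)$ then $W_1(2,4)=(3,4)$; so $W_{(2,1)}(2,3)=(3,4)$. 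The two are different permutations, contrary to your claim that the end-to-end map is $\nu$-independent. Moreover $\{2,3\}\mapsto\{1,4\}$ is not a cyclic shift by one step of a $4$-necklace (shift by $+1$ gives $\{3,4\}$, by $-1$ gives $\{1,2\}$), so $W_{(1,2)}$ is not ``rotate every white bead by one'' in your encoding. Your naive bead encoding only works for $\nu=(k,k-1,\dots,1)$ (i.e.\ promotion, which is essentially the Propp--Roby case the paper is generalizing); for other $\nu$ it simply does not produce a rotation, and no amount of care with wrap-around conventions will fix that.

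The paper's argument is fundamentally different and is built exactly to get around this. It works in the \emph{tuple board} $T$ whose rows are $x, W_\nu(x), W_\nu^2(x),\dots$, defines \emph{snakes} — maximal chains of adjacent cells holding $1,2,\dots,m$, whose left/diagonal moves depend on $\nu$ — and shows (Lemma~\ref{pp}) that consecutive snakes leave no gaps and that their ``snake maps'' are cyclic shifts of one another. Part~(1) and the $f_j$ homomesy in Part~(3) both drop out of the fact that every orbit cylinder is tiled by snakes of length $m$: each value $j$ appears once per snake, giving the $k/m$ average. Part~(2) uses a map $\mathcal K$ to necklaces built from the snake map — an orbit-dependent combinatorial invariant, not the naive bead placement — and shows $\mathcal K$ intertwines inverse winching with rotation. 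For the $d_{i,j}$ result, the paper (Lemma~\ref{map}) uses a symmetry of the tuple cylinder itself (a bijection between column $r$ and column $k+1-r$ sending $x\mapsto m+1-x$), which is intrinsic to a single orbit. Your proposed complementation $\bar x_i=m+1-x_{k+1-i}$ does satisfy $W_i(\bar x)=\overline{\bar W_{k+1-i}(x)}$, but composing over $\nu$ shows $\bar{\cdot}$ conjugates $W_\nu$ to $W_{\nu'}^{-1}$ for a generally \emph{different} permutation $\nu'(i)=k+1-\nu(k+1-i)$, so $\bar{\cdot}$ need not send $W_\nu$-orbits to $W_\nu$-orbits; this is a second gap. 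You would need the orbit-intrinsic symmetry from the tuple board to make the $0$-mesy argument go through for arbitrary $\nu$.
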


We will prove the above theorem in the next section.  Given the bijection in Corrollary \ref{bijection}, Theorem \ref{main} is a straightforward conclusion of Theorem \ref{winchthm}.
In addition, Theorem \ref{PRthm} can be concluded from the above theorem. In fact, a more general statement is shown in the next section (Corollary \ref{sizematters}).

%%%%%%%%%%%%%%%%%%%%%%%%%%
%%%%%%%%           Winching with Lower bounds

The following variation of winching is called \textbf{winching with lower bounds} and it corresponds to comotion on $J({\cal U}_a)$.

\begin{defin}

Consider the sequence of lower bounds $l=(l_{1}, \dots, l_{k}) , ~ 0<l_{1}<\dots <l_{k}<m{+}1$ and $S'_{k,m}=\{(x_1, x_2,\dots x_k)\in S_{k,m}\vert  x_i\geq l_i\}$, where $ S_{k,m}$ is the set defined in Definition \ref{Winchdef}.
For any index $i\in [k]$, we define the map ${\underline{W}}_i: S'_{k,m}\rightarrow S'_{k,m}$  called {winching with lower bounds $l$ on index $i$} by $$\forall w\in S'_{k,m} \quad {\underline{W}}_{i}(w)=\max \{W_i(w),l_{i}\},$$where $W_i$ is the action of winching on index $i$ (Definition \ref{Winchdef}).
% In other words, we apply winching to $w$ but we don't allow any of the elements to go beyond certain lower bounds. 
Having ${\cal U}_a$ be the poset which is defined in Definition \ref{posets}, we will have:

\end{defin}

\begin{lemma}\label{beta}

There is a bijection $\beta:J({\cal U}_a)\rightarrow S'_{a,2a}$ such that for the lower bounds $l=(1,3,5,\dots,2a{-}1)$, we have: for any $I\in J({\cal U}_a)$, $\beta(\sigma_{C_j}(I))= {\underline{W}}_{j}(\beta(I))$.
 
\end{lemma}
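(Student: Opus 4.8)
The plan is to follow the template of the proof of Lemma~\ref{LemmaAlpha}: encode an ideal of ${\cal U}_a$ by its column sizes, read from right to left, with a linear shift of slope~$2$ chosen to absorb the staircase shape of ${\cal U}_a$. First I would record the elementary fact that an ideal $I\in J({\cal U}_a)$ is determined by its column sizes $m_c=|C_c(I)|$ (take the bottom $m_c$ elements of each column, which in ${\cal U}_a$ is the chain $(c,a{+}1{-}c)<\dots<(c,a)$), and that a sequence $(m_1,\dots,m_a)$ arises from an ideal iff $0\le m_c\le c$ for every $c$ and $m_{c-1}\ge m_c-1$ for $2\le c\le a$ — the down-set condition, as in the $\mathcal Q_{a,b}$ case, reduces to this adjacent-column inequality. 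Then I would set $\beta(I)=(\beta_1(I),\dots,\beta_a(I))$ with $\beta_i(I)=|C_{a+1-i}(I)|+2i-1$. Using $0\le m_c\le c$ and $m_{c-1}\ge m_c-1$ one checks directly that $\beta$ is strictly increasing (slope~$2$ is needed precisely because $m_{c-1}$ may equal $m_c-1$), that $\beta_i\ge 2i-1=l_i$, that $\beta_1>0$ and $\beta_a\le 2a$, so $\beta(I)\in S'_{a,2a}$; conversely, given $x\in S'_{a,2a}$ the assignment $m_c=x_{a+1-c}-2(a{+}1{-}c)+1$ satisfies the three constraints above (respectively from $x_i\ge l_i$, $x_a\le 2a$, and $x$ strictly increasing), so $\beta$ is a bijection.

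The substance is the equivariance $\beta(\sigma_{C_j}(I))={\underline{W}}_j(\beta(I))$. Since $\sigma_{C_j}$ changes only column $j$, only the coordinate $\beta_{a+1-j}$ moves, and I would verify that it moves by the winching-with-lower-bounds rule exactly through the display chain used in the proof of Lemma~\ref{LemmaAlpha}. Writing $l=|C_j(I)|$ and toggling the elements of $C_j$ from top to bottom: if $l<j$ and $|C_{j-1}(I)|\ge l$ then $\sigma_{C_j}$ adds one element at the top of column~$j$, so $\beta_{a+1-j}$ increases by $1$; substituting the definition of $\beta$, the clause ``$l<j$ and $|C_{j-1}(I)|\ge l$'' is exactly the inequality $\beta_{a-j+2}(I)>\beta_{a+1-j}(I)+1$ from Definition~\ref{Winchdef}. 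Otherwise $\sigma_{C_j}$ strips column~$j$ downward; the one genuinely new point compared to $\mathcal Q_{a,b}$ is that, because column $j{+}1$ of ${\cal U}_a$ starts one step lower than column~$j$, the toggling peels column~$j$ down to height $\max\bigl(|C_{j+1}(I)|-1,\,0\bigr)$ rather than to $|C_{j+1}(I)|$. Translating through the shift $2i{-}1$, this reads $\beta_{a+1-j}(\sigma_{C_j}(I))=\max\bigl(\beta_{a-j}(I)+1,\ l_{a+1-j}\bigr)$, which is precisely ${\underline{W}}$ applied to the relevant coordinate. I would dispatch the extreme columns $j=1$ and $j=a$ with suitable boundary conventions ($|C_{a+1}(I)|=0$; and $(1,a)$ is minimal, so column $1$ can always be toggled up when empty), matching the conventions $x_0=0$, $x_{k+1}=m{+}1$ of Definition~\ref{Winchdef}.

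The main obstacle is the bookkeeping in the stripping (``otherwise'') case: one must track which of the top-to-bottom toggles actually fire, observe that in ${\cal U}_a$ the stripping halts one level earlier (relative to column $j{+}1$) than in $\mathcal Q_{a,b}$ because of the offset, and notice that when column $j{+}1$ is empty column~$j$ is emptied altogether — this last event is exactly the one where $\beta_{a-j}(I)+1$ would fall below the lower bound $l_{a+1-j}$, so it is the $\max$ in the definition of ${\underline{W}}$ that reconciles the two sides. (A handful of small examples, say in ${\cal U}_2$ and ${\cal U}_3$, makes this transparent and is worth including.) Once this case analysis is in place, the remainder is the same routine inequality chasing as in Lemma~\ref{LemmaAlpha}.
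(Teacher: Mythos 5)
Your argument is correct, and it fixes the same bijection $\beta_i(I)=|C_{a+1-i}(I)|+2i-1$ as the paper, but the equivariance step $\beta(\sigma_{C_j}(I))=\underline{W}_j(\beta(I))$ is handled differently. The paper embeds $I$ into $\mathcal{Q}_{a,a}$ by setting $I'=I\cup(\mathcal{Q}_{a,a}\setminus\mathcal{U}_a)$, observes that $\beta(I)=\alpha(I')$ and that $\sigma_{C_j}(I)=\sigma_{C_j}(I')\setminus(\mathcal{Q}_{a,a}\setminus\mathcal{U}_a)$, and then inherits the toggle bookkeeping wholesale from Lemma~\ref{LemmaAlpha}; the $\max$ with the lower bound $l_{a+1-j}=2(a+1-j)-1$ appears because the set difference may cut below the staircase, which is exactly the event where the stripped column size $|C_{j+1}(I')|-(a-j)=|C_{j+1}(I)|-1$ would go negative. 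You instead re-derive the toggle dynamics entirely inside $\mathcal{U}_a$, tracking top-to-bottom which toggles fire, and isolate the one genuinely new phenomenon — that column $j$ peels down to $\max(|C_{j+1}(I)|-1,0)$ rather than to $|C_{j+1}(I)|$, because column $j{+}1$ sits one step lower. Both routes are valid; the paper's is shorter because it recycles the $\mathcal{Q}_{a,a}$ computation, while yours is more self-contained and makes explicit exactly when the lower-bound clause of $\underline{W}$ is invoked (namely when $|C_{j+1}(I)|=0$), a point the paper's terse chain of displayed equalities leaves implicit. One small imprecision on your side: the bound $m_c\le c$ for a valid column-size sequence does not follow from $x_a\le 2a$ alone as you suggest, but from $x_a\le 2a$ (giving $m_1\le 1$) combined with strict monotonicity (giving $m_c\le m_{c-1}+1$); this is a bookkeeping detail, not a gap.
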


\begin{proof}
Fix arbitrary $a$ and consider $I\in J({\cal U}_a)$, we define $\beta(I)=(\beta_{1},\dots ,\beta_{a})$ as follows: for any $1\leq j\leq a$, $ \beta_{j}(I)=\vert  C_{a{+}1{-}j}(I)   \vert +2j{-}1$.
Considering  the ideal $I'\in J({\cal Q}_a),~ I'=I\cup ({\cal Q}_{a,a}-{\cal U}_a)$, we will have, $\beta(I)=\alpha(I')$. Hence, $\beta$ is an increasing sequence.  Since $\sigma_{C_j}(I)=\sigma_{C_j}(I')-({\cal Q}_{a,a}-{\cal U}_a)$ we have,

\begin{equation}
\beta_j(\sigma_{C_{a+j-1}}(I))= |\sigma_{C_{a+j-1}}(I)|+2j-1=|\sigma_{C_{a+j-1}}(I')-({\cal Q}_{a,a}-{\cal U}_a)|+2j-1
\end{equation}

\begin{equation}
\Rightarrow \beta_j(\sigma_{C_{a+j-1}}(I))= \max \{|\sigma_{C_{a+j-1}}(I')| -j+1,0 \}+2j-1=\max\{ \vert\sigma_{C_{a+j-1}}(I')\vert+j,2j-1\}
\end{equation}

\begin{equation}
\Rightarrow \beta_j(\sigma_{C_{a+j-1}}(I))=\max\{ (W_j(\alpha(I')))_j,2j-1\}=\max\{(W_j(\beta(I)))_j,2j-1\}.
\end{equation}
\end{proof}

\begin{con}\label{bijection2}
Consider any arbitrary permutation $\nu:[a]\rightarrow[a]$, the action ${\cal T}_{\nu}: J({\cal U}_a)\rightarrow J({\cal U}_a)$ and $I\in J({\cal U}_a)$. For any $x\in S_{a,2a}$, let the lower bounds be $l=(1,3,\dots,2a{-}1)$. Then: $\underline{W}_{\nu}(x)= \underline{W}_{\nu(a)}\circ \underline{W}_{\nu(a{-}1)}\circ\dots \circ \underline{W}_{\nu(1)}(x)$.
 Bijection $\beta$ introduced in \ref{beta} satisfies the following property: $\beta({\cal T_{\nu}}(I))=\underline{W}_{\nu} (\beta(I))$.
\end{con}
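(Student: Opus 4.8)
The plan is to obtain Corollary \ref{bijection2} from Lemma \ref{beta} by a routine induction on the length of the toggle-composition defining ${\cal T}_\nu$, exactly mirroring the way Corollary \ref{bijection} is deduced from Lemma \ref{LemmaAlpha}. First I would restate the single-step statement in the form I want to iterate: by Lemma \ref{beta}, for the lower bounds $l=(1,3,\dots,2a{-}1)$, for every column index $j\in[a]$, and for every $I\in J({\cal U}_a)$, one has $\beta(\sigma_{C_j}(I))=\underline{W}_j(\beta(I))$; in other words $\beta\circ\sigma_{C_j}=\underline{W}_j\circ\beta$ as maps $J({\cal U}_a)\to S'_{a,2a}$, for each fixed $j$. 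Two facts make iteration legitimate and I would cite them up front: that $\beta$ is a genuine bijection onto $S'_{a,2a}$ (part of Lemma \ref{beta}, via $\beta(I)=\alpha(I')$ with $I'=I\cup({\cal Q}_{a,a}-{\cal U}_a)$), and that each $\underline{W}_j$ is a well-defined self-map of $S'_{a,2a}$ (this is exactly the content of the definition of winching with lower bounds, since replacing the $j$-th coordinate of $W_j(w)$ by $\max\{W_j(w)_j,l_j\}$ keeps the tuple strictly increasing and within range, because $l_{j-1}\le w_{j-1}<W_j(w)_j$ and $l_j<l_{j+1}\le w_{j+1}$, while $W_j(w)_j<w_{j+1}$ by definition of $W_j$).

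Now the induction. Write ${\cal T}_\nu=\sigma_{C_{\nu(a)}}\circ\cdots\circ\sigma_{C_{\nu(1)}}$ and $\underline{W}_\nu=\underline{W}_{\nu(a)}\circ\cdots\circ\underline{W}_{\nu(1)}$, and for $0\le k\le a$ put $I_k=\sigma_{C_{\nu(k)}}\circ\cdots\circ\sigma_{C_{\nu(1)}}(I)$, with $I_0=I$. Since toggling preserves $J({\cal U}_a)$, each $I_k$ is again an order ideal of ${\cal U}_a$, so Lemma \ref{beta} applies to $I_k$ for every $k$. I claim $\beta(I_k)=\underline{W}_{\nu(k)}\circ\cdots\circ\underline{W}_{\nu(1)}(\beta(I))$ for all $k$: this is trivial for $k=0$, and if it holds for $k$ then applying Lemma \ref{beta} to the ideal $I_k$ with column index $\nu(k{+}1)$ gives $\beta(I_{k+1})=\beta(\sigma_{C_{\nu(k+1)}}(I_k))=\underline{W}_{\nu(k+1)}(\beta(I_k))=\underline{W}_{\nu(k+1)}\circ\underline{W}_{\nu(k)}\circ\cdots\circ\underline{W}_{\nu(1)}(\beta(I))$, which completes the step. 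Taking $k=a$ yields $\beta({\cal T}_\nu(I))=\underline{W}_\nu(\beta(I))$, as asserted.

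There is no serious obstacle here: the statement is a formal consequence of the single-toggle Lemma \ref{beta} together with the fact that $\beta$ intertwines the two actions column-by-column. The only points deserving a sentence of care are (i) keeping the index bookkeeping between $\sigma_{C_j}$ and $\underline{W}_j$ consistent with the convention of Lemma \ref{beta} (and hence with Corollary \ref{bijection}), and (ii) checking that the intermediate tuples $\beta(I_k)$ genuinely remain in $S'_{a,2a}$ so that the next $\underline{W}_{\nu(k+1)}$ is defined on them — but this is immediate from $I_k\in J({\cal U}_a)$ and the remarks in the first paragraph.
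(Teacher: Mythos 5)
Your argument is correct and is precisely the routine iteration of Lemma \ref{beta} that the paper tacitly invokes (the paper states Corollary \ref{bijection2} without proof, exactly as it does with Corollary \ref{bijection}). Your induction on the partial compositions $I_k=\sigma_{C_{\nu(k)}}\circ\cdots\circ\sigma_{C_{\nu(1)}}(I)$, together with the observation that each $I_k$ stays in $J({\cal U}_a)$ so Lemma \ref{beta} can be reapplied, is the intended and essentially unique way to deduce the corollary.
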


\begin{theorem}\label{winchlowthm}

Let $\nu$ be an arbitrary permutation of $[a]$. Consider $\underline{W}_{\nu}:S'_{a,b}\rightarrow S'_{a,b}$ with lower bounds $(l_1,l_2,\dots, l_a)$.
For each $i\in[a{+}b]$ let $[i,a{+}b]={i,i+1\dots a{+}b}$ and $f:[a{+}b]\rightarrow \mathbb{R}$ a function  that has the same average in all $[{l_i},a+b]$ , $1\leq i\leq a$.  Let $g:S'_{a,b}\rightarrow \mathbb{R}$ be defined as, $g(x)=\sum_{i=1}^a f(x_i)$. 
Then, the triple $\langle S'_{a,b}, \underline{W}_{\nu}, g\rangle$ exhibits homomesy.

\end{theorem}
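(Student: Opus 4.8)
\emph{Overview and reduction.} The plan is to follow the template of the proof of Theorem~\ref{winchthm}: isolate the homomesy of $g$ in a single counting statement about how often a fixed value occurs among the entries of the sequences in an orbit, and then prove that statement from the local rule for $\underline W_\nu$. Concretely, fix a $\underline W_\nu$-orbit $\mathcal O\subseteq S'_{a,b}$ and for a value $v$ put $N_v(\mathcal O)=\#\{x\in\mathcal O:\ v\in x\}$. Since the entries of each $x$ are distinct, $\sum_{x\in\mathcal O}g(x)=\sum_{x\in\mathcal O}\sum_{i=1}^{a}f(x_i)=\sum_v f(v)\,N_v(\mathcal O)$, while counting incidences the other way gives $\sum_v N_v(\mathcal O)=a\,|\mathcal O|$. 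The hypothesis on $f$ is readily seen to be equivalent to: $f$ has one common average $\bar f$ on each \emph{block} $B_i:=\{l_i,l_i{+}1,\dots,l_{i+1}{-}1\}$ for $1\le i<a$ and on $B_a:=\{l_a,\dots,a{+}b\}$; these blocks partition $\{l_1,\dots,a{+}b\}$ (and $N_v\equiv 0$ for $v<l_1$, since no entry can take such a value). Hence the theorem reduces to the \textbf{block-constancy property}: $N_v(\mathcal O)$ depends on $v$ only through the block $B_i$ containing it, say $N_v(\mathcal O)=c_i(\mathcal O)$ for $v\in B_i$. Granting this, $\sum_{x\in\mathcal O}g(x)=\sum_i c_i(\mathcal O)\sum_{v\in B_i}f(v)=\bar f\sum_i c_i(\mathcal O)\,|B_i|=\bar f\sum_v N_v(\mathcal O)=a\bar f\,|\mathcal O|$, so the orbit-average of $g$ equals $a\bar f$ on every orbit. (The converse implication also holds, so this is precisely the right statement to aim at.)

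\emph{Proof of the block-constancy property.} Expand the orbit into single toggles: write $\mathcal O=\{x^{(0)},\dots,x^{(d-1)}\}$ and interpolate to a cyclic sequence $z_0,z_1,\dots,z_{ad}=z_0$ with $z_{t+1}=\underline W_{\nu((t\bmod a)+1)}(z_t)$ and $x^{(s)}=z_{sa}$. Call a micro-step a \emph{$v$-insertion} (resp. a \emph{$v$-deletion}) if the toggle $\underline W_j$ it applies creates (resp. destroys) the entry $v$. By Definition~\ref{Winchdef} and the definition of $\underline W_j$, whether a given application of $\underline W_j$ is a $v$-insertion or a $v$-deletion depends only on the two neighbouring entries at that instant and on the single inequality $l_j\le v$; in particular it is identical for all $v$ lying in one block, because inside a block no lower bound $l_j$ separates two consecutive values. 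Now trace the presence of $v$ around the cyclic sequence and pair $v$-insertions with $v$-deletions --- the same bookkeeping that, in the unbounded case $l=(1,2,\dots,a)$, underlies Theorem~\ref{winchthm}(3) (and the rotation of the Stanley--Thomas word of the corresponding ideal). This evaluates $N_v(\mathcal O)$ in terms only of the orbit and the set $\{\,j:\ l_j\le v\,\}$, hence yields a value that is constant as $v$ runs over a block and changes only at a block boundary, where an additional family of toggles (those $j$ with $l_j$ equal to the new block's minimum) begins to interact with the value --- precisely why $c_{i+1}(\mathcal O)$ may differ from $c_i(\mathcal O)$.

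The main obstacle is this last step: carrying the insertion/deletion pairing through the clamping $\underline W_j(w)=\max\{W_j(w),l_j\}$ and checking that the surviving orbit-count is genuinely insensitive to $v$ within a block. By contrast, the reduction above, and the deduction of Theorem~\ref{mainU} as the instance $l=(1,3,\dots,2a{-}1)$, $a{+}b=2a$ (all blocks of size two) transported through the bijection $\beta$ of Lemma~\ref{beta} and Corollary~\ref{bijection2}, are routine.
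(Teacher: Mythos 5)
Your reduction is correct and, I think, a genuine clarification of what has to be proved: writing $N_v(\mathcal O)=\#\{x\in\mathcal O:v\in x\}$ and observing that the hypothesis on $f$ is equivalent to $f$ having a common average $\bar f$ on each block $B_i=\{l_i,\dots,l_{i+1}-1\}$ (and $B_a=\{l_a,\dots,a{+}b\}$), homomesy of $g$ does follow, by the arithmetic you give, from the claim that $N_v(\mathcal O)$ is constant as $v$ runs over a single block $B_i$. Your converse remark is also right (take $f=e_v-e_{v'}$ supported in one block), so this really is the pivot of the theorem.

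However, there is a genuine gap, and you flag it yourself: the block-constancy of $N_v(\mathcal O)$ is asserted but not proved. The ``insertion/deletion pairing'' sketch does not close it. The difficulty is that while the \emph{local} rule for $\underline W_j$ only sees $v$ through the inequality $l_j\le v$, the entire toggle history $z_0,z_1,\dots$ depends on the actual values, so two values $v\ne v'$ in the same block are inserted and deleted at different micro-steps and one cannot simply superpose the two histories; turning this into a proof would essentially require re-deriving a global structural fact. The paper supplies exactly that structure via the tuple-cylinder/snake machinery of Section~\ref{winch}: it shows (the lemma preceding the proof of Theorem~\ref{winchlowthm}, the analogue of Lemma~\ref{Wsnake}) that every snake in the cylinder of a $\underline W_\nu$-orbit runs through the consecutive values $l_q,l_q{+}1,\dots,m$ for the column $q$ where it starts, and the nonzero cells of the cylinder are partitioned by these snakes. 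From that, $N_v(\mathcal O)$ equals the number of snakes whose starting column $q$ satisfies $l_q\le v$, which is manifestly constant on each block $B_i$; the paper then finishes by averaging $f$ snake by snake, which is the same computation as yours phrased without $N_v$. So your reduction lines up with the paper's conclusion, but the load-bearing combinatorial lemma --- the snake decomposition --- is missing from your argument, and the local pairing idea, as stated, does not replace it.

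Two minor points. First, you write $[l_i,a{+}b]$ but then work with $m$; in this theorem $m=b$ (the codomain parameter of $S'_{a,b}$), so the relevant range is $[l_i,b]$, not $[l_i,a{+}b]$ --- the theorem's statement has a typo that you inherit (it is stated for $S'_{a,2a}$ when applied via $\beta$, where indeed $m=2a$). Second, your closing remark about deducing Theorem~\ref{mainU} via $\beta$ and Corollary~\ref{bijection2} is correct but is downstream of this theorem, not part of its proof.
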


%%%% whinching with zeros%%%%%

We now define the action of \textbf{winching with zeros} to study homomesy in $J({\cal L}_a)$. 

\begin{defin}

Let $S_n$ be the set of all $n$-tuples $x=(x_{1},\dots,x_{n}) $ such that for some $0 \leq k \leq n$ $x_1=x_2\dots=x_k=0$ and $1 \leq x_{k{+}1}<x_{k{+}2}\dots<x_{n}\leq n$.
We define the map $\mbox{WZ}_i:S_n\rightarrow S_n$, called {winching with  zeros on  index $i$} to be

$$\mbox{{WZ}}_{i}(x)=  \left\{ \begin{array}{ll}

  x_{i}{+}1& \mbox{if  }  x_{i}{+}1<\min\{x_{i+1}, n{+}1\}    ; \\
 x_{i-1}{+}1& \mbox{if  } 1< i \mbox{ and } 0<x_{i-1}; \\
0 & \mbox{otherwise.} \\
\end{array} \right.
$$

 \end{defin}

 \begin{lemma}\label{gamma}\label{gamma}

There is a bijection $\gamma:J({\cal L}_a)\rightarrow S_{a}$ such that: for any $I\in J({\cal L}_a)$, $\gamma(\sigma_{C_j}(I))= {\mbox{WZ}}_{j}(\gamma(I))$.
 
\end{lemma}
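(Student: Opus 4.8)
The plan is to follow the template of the proof of Lemma~\ref{LemmaAlpha}. For $I\in J({\cal L}_a)$ and $1\le j\le a$, the column $C_j=\{(j,1)<(j,2)<\dots<(j,j)\}$ is a chain of $j$ elements, so $C_j(I)$ is an initial segment of it and is determined by $m_j(I):=|C_j(I)|\in\{0,1,\dots,j\}$. I would set $\gamma(I):=(d_1(I),\dots,d_a(I))$, where $d_j(I):=j-m_j(I)$ is the ``deficiency'' of column $j$, and claim this $\gamma$ works.

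First I would check that $\gamma$ is a bijection $J({\cal L}_a)\to S_a$. The only covering pairs of ${\cal L}_a$ joining columns $j$ and $j{+}1$ are $(j,\ell)\lessdot(j{+}1,\ell)$ for $1\le\ell\le j$, so $I$ being an ideal is equivalent to $m_j\ge\min(m_{j+1},j)$ for all $j<a$; rewriting in terms of deficiencies, this says $d_{j+1}=0\Rightarrow d_j=0$ and $d_{j+1}\ge 1\Rightarrow d_j<d_{j+1}$. Hence in $\gamma(I)$ the zero entries form an initial block and the remaining entries are strictly increasing, and since they are strictly increasing with $d_a\le a$ they also satisfy $d_j\le j\le a$; thus $\gamma(I)\in S_a$. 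Running the equivalence backwards, every $x\in S_a$ is $\gamma$ of the unique ideal with $m_j=j-x_j$, so $\gamma$ is a bijection (alternatively one may just invoke $|J({\cal L}_a)|=2^a=|S_a|$ together with the fact that $I$ is recovered from its column sizes).

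Then I would verify the equivariance $\gamma(\sigma_{C_j}(I))=\mathrm{WZ}_j(\gamma(I))$. Because $\sigma_{C_j}$ is a composition of toggles of elements of column $j$ only, it alters $I$ inside that column alone, so $d_{j'}$ is unchanged for $j'\ne j$ -- mirroring that $\mathrm{WZ}_j$ touches only coordinate $j$ -- and it remains to compare $d_j':=j-|C_j(\sigma_{C_j}(I))|$ with $\mathrm{WZ}_j$ applied to the deficiency vector, using the boundary conventions that the absent column $C_{a+1}$ is treated as empty (deficiency $a{+}1$) and that at $j=1$ one reads $d_0=0$. Running the toggle of column $j$ from the bottom up, exactly as in Lemma~\ref{LemmaAlpha}, one finds three regimes. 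If column $j$ is nonempty and its top cell is not blocked from above by column $C_{j+1}$ -- which, translated, is exactly $d_j+1<\min\{d_{j+1},a{+}1\}$ -- the top cell is deleted, so $m_j$ drops by $1$, i.e. $d_j'=d_j+1$: the first clause of $\mathrm{WZ}_j$. Otherwise the column is re-grown from $C_{j-1}$: the cells $(j,1),(j,2),\dots$ are adjoined as far as down-closure permits, reaching $m_j'=m_{j-1}$ when $C_{j-1}$ is not full and cascading all the way to $m_j'=j$ (the top of the staircase) when $C_{j-1}$ is full; in deficiency terms this is $d_j'=d_{j-1}+1$ (the second clause of $\mathrm{WZ}_j$, when $j>1$ and $d_{j-1}>0$), respectively $d_j'=0$ (the ``otherwise'' clause, when $j=1$ or $d_{j-1}=0$). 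In each regime one checks, using the inequalities of the previous paragraph, that the trigger condition for the toggle coincides with the corresponding clause of $\mathrm{WZ}_j$.

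The case analysis of the column toggle in the last paragraph is the step I expect to be the main obstacle: keeping straight which neighbouring column heights produce a deletion versus a re-growth, how far the cascade runs, and how the empty/full corner cells and the endpoints $j=1,a$ behave -- these are precisely the corner situations in which $\mathrm{WZ}$ differs from ordinary winching $W$ (the ``wrap to $0$'' clause), so they must be matched against the definition with care; the rest is routine, with Lemma~\ref{LemmaAlpha} as a faithful model. Finally, exactly as Corollary~\ref{bijection} is deduced from Lemma~\ref{LemmaAlpha}, composing this single-column statement over a permutation $\nu$ yields $\gamma({\cal T}_\nu(I))=\mathrm{WZ}_\nu(\gamma(I))$, which is what is used to transport homomesy results from $S_a$ back to $J({\cal L}_a)$.
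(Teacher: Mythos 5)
Your bijection is not the paper's, and in fact it does not satisfy the required equivariance. The paper's proof takes $\gamma_j(I)=|C_{a+1-j}(I)|$, the column size itself with reversed index, and (despite what Definition~\ref{posets} literally says) it works with the orientation in which $C_j$ is $\{(j,j),(j,j{+}1),\dots,(j,a)\}$ with $a{-}j{+}1$ elements, as in the figure. Your map $d_j=j-|C_j(I)|$ (a deficiency, with $|C_j|=j$) is a legitimate bijection $J({\cal L}_a)\to S_a$, but it intertwines $\sigma_{C_j}$ with the \emph{wrong} map. Concretely, take $a=2$ with your convention $i\ge j$: for $I=\{(1,1),(2,1)\}$ one has $(m_1,m_2)=(1,1)$ and $\gamma(I)=(0,1)$, while $\sigma_{C_2}(I)={\cal L}_2$ gives $\gamma(\sigma_{C_2}(I))=(0,0)$; yet $\mbox{WZ}_2\bigl((0,1)\bigr)=(0,2)$. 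So the single-column identity already fails. (For $a=2$ one can check that $\sigma_{C_2}$ under your $\gamma$ acts as $\mbox{WZ}_2^{-1}$, not $\mbox{WZ}_2$.)

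The root cause is your case analysis of the column toggle, which is backwards. By Definition~\ref{defFRC}/\ref{Comotion}, $\sigma_{C_j}=\sigma_{x_1}\circ\cdots\circ\sigma_{x_m}$ with $x_1<\cdots<x_m$, so the \emph{top} cell is toggled first; as in the proof of Lemma~\ref{LemmaAlpha}, the effect is either to adjoin one cell just above the top of $C_j(I)$ (when $C_{j-1}$ is tall enough to permit it) or, failing that, to strip $C_j(I)$ down to the height of $C_{j+1}(I)$. Your description, delete the top cell if it is unblocked and otherwise regrow $C_j$ from $C_{j-1}$, is the action of the \emph{inverse} composite $\sigma_{x_m}\circ\cdots\circ\sigma_{x_1}$. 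With the correct toggle direction, the step size is not ``$m_j$ drops by one'': it can drop by arbitrarily many cells (to $|C_{j+1}|$) or grow by one, which is exactly what makes $|C_{a+1-j}|$ (not a deficiency) the quantity that transforms by $\mbox{WZ}$. To repair the argument you should either adopt the orientation the paper's proof actually uses (so that $C_j$ has $a{-}j{+}1$ elements and $\gamma_j=|C_{a+1-j}(I)|$), or, if you insist on $i\ge j$, recompute the toggle honestly from top to bottom and find a genuinely different bijection; the deficiency map will not do.

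Two small caveats to keep the correct bookkeeping straight once you fix this: the paper's own proof in fact derives $\gamma(\sigma_{C_j}(I))=\mbox{WZ}_{a+1-j}(\gamma(I))$ rather than the index $j$ appearing in the lemma statement (harmless, since Theorem~\ref{zeromain} quantifies over all $\nu$), and the paper's Definition~\ref{posets} for ${\cal L}_a$ is inconsistent with the figure and with this proof, which is likely what led you astray.
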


\begin{proof} 
 Fix an arbitrary natural number $a$ and consider ${ I} \in J({\cal L}_a)$. We define, $\gamma (I)=(\gamma_1,\gamma_2, \dots,\gamma_a)$ as follows: for $1\leq j\leq a$, $\gamma_j(I)=\vert C_{a{+}1{-}j}(I)\vert$.
 For any $j_1<j_2$, we have $\vert C_{j_1}(I)\vert > \vert C_{j_2}(I)\vert $. Hence, $\gamma$ will be an increasing sequence.

Let $C_j=\{v_j, v_{j+1},\dots v_a\}$ where for $j\leq i\leq a$, $v_i= (j,i)$.  Assume $\vert C_j(I)\vert=l$, which means $C_j(I)=\{v_j, v_{j+1},\dots v_{j+l-1}\}$. For $n>j+l$, $\sigma_{v_n}(I)=I$. We have three cases: if $n=j+l$, we will have $\sigma_{v_n}(I)=I \cup \{v_n\}$ if and only $(j-1, j+l)\in I$ i.e. $\vert C_{j-1}(I)\vert> l+1$. If $C_{j+1}(I)=0$, $\sigma_{C_j}(I)= I- C_j(I)$. And if $\sigma_{C_j}(I)>0$, then $\sigma_{C_j}(I)=I-\{v_{k+1},\dots,v_{j+l-1}\}$, where $k=\vert C_{j+1}(I)\vert$.
Letting $\sigma_{C_j}(I)=K$, we will have:

\begin{equation}
 {C_{j}}(K) = \begin{cases}
C_j(I)\cup \{v_{j{+}l}\}, & \text{if } \vert C_{j{-}1}(I)\vert > l{+}1 .\\
\emptyset& \text{if } \vert C_{j{-}1}(I)\vert \leq l+1 \text{ and} \vert C_{j{+}1}(I)\vert=0 .\\
C_j(I)-\{v_{k+1},v_{k+2},\dots, v_{j+l-1}\}, & \text{otherwise}.\\
 \quad k=\vert C_{j{+}1}(I)\vert>0&
  \end{cases}
\end{equation}

\begin{equation}
\vert {C_{j}}(K) \vert= \begin{cases}
l+1, & \text{if } \vert C_{j{-}1}(I)\vert > l{+}1 .\\
0 & \text{if } \vert C_{j{-}1}(I)\vert \leq l+1 \text{ and} \vert C_{j{+}1}(I)\vert=0 .\\
k+1, & \text{otherwise}.\\
 \quad k=\vert C_{j{+}1}(I)\vert>0&
  \end{cases}
\end{equation}

\begin{equation}
{\gamma_{j}}(K) = \begin{cases}
\gamma_j (I)+1, & \text{if } \gamma_{j+1}(I)> l{+}1 .\\
0 & \text{if }    \gamma_{j+1}(I) \leq l+1 \text{ and} \gamma_{j-1}(I)=0 .\\
\gamma_{j-1}+1, & \text{otherwise}.\\

  \end{cases}
\end{equation}

\begin{equation}
\Leftrightarrow \gamma_{a{+}1{-}j}(\sigma_{C_{j}}(I))= \mbox{\emph {WZ}}_{a{+}1{-}j}( \gamma(I)).
\end{equation}
\end{proof}

 \begin{con}\label{bijection2}
Consider any arbitrary natural number $[n]$ and permutation $\nu$ on $n$, the action ${\cal T}_{\nu}: J({\cal L}_a)\rightarrow J({\cal L}_a)$, and $I\in J({\cal L}_a)$. For any $x\in S_{a}$,  we will have: $\mbox{WZ}_{\nu}(x)= \mbox{WZ}_{\nu(a)}\circ \mbox{WZ}_{\nu(a{-}1)}\circ\dots \circ \mbox{WZ}_{\nu(1)}(x)$.
 The bijection $\gamma$ introduced in \ref{gamma} satisfies the following property: $\gamma({\cal T_{\nu}}(I))=\mbox{WZ}_{\nu} (\gamma(I))$.
\end{con}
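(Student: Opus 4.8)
The plan is a short induction on the number of column toggles, bootstrapping from the single-toggle intertwining identity of Lemma \ref{gamma}. Recall from Definition \ref{Comotion} that ${\cal T}_{\nu}(I) = \sigma_{C_{\nu(a)}}\circ\sigma_{C_{\nu(a-1)}}\circ\cdots\circ\sigma_{C_{\nu(1)}}(I)$, and that the statement itself defines $\mbox{WZ}_{\nu} = \mbox{WZ}_{\nu(a)}\circ\cdots\circ\mbox{WZ}_{\nu(1)}$; the two compositions are assembled in the same order, so no re-indexing will be needed.

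First I would note that Lemma \ref{gamma} holds for \emph{every} ideal in $J({\cal L}_a)$ and every index $j\in[a]$, hence in particular it applies to any ideal produced from $I$ by a finite sequence of column toggles. This is the only place the full force of ``$\gamma$ is a bijection $J({\cal L}_a)\rightarrow S_a$'' is used: it guarantees that at each step both sides of the identity remain inside $J({\cal L}_a)$ and $S_a$ respectively. Then I would set $I_0=I$ and $I_t=\sigma_{C_{\nu(t)}}(I_{t-1})$ for $t=1,\dots,a$, so that $I_a={\cal T}_{\nu}(I)$, and prove by induction on $t$ that $\gamma(I_t)=\mbox{WZ}_{\nu(t)}\circ\cdots\circ\mbox{WZ}_{\nu(1)}(\gamma(I))$: the case $t=0$ is trivial, and the inductive step is a single application of Lemma \ref{gamma} to the ideal $I_{t-1}$ with index $\nu(t)$, namely $\gamma(I_t)=\gamma(\sigma_{C_{\nu(t)}}(I_{t-1}))=\mbox{WZ}_{\nu(t)}(\gamma(I_{t-1}))$, followed by the inductive hypothesis. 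Specializing to $t=a$ yields $\gamma({\cal T}_{\nu}(I))=\mbox{WZ}_{\nu}(\gamma(I))$, as claimed.

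The corresponding corollaries for $\alpha$ on $J({\cal Q}_{a,b})$ and $\beta$ on $J({\cal U}_a)$ follow verbatim from Lemmas \ref{LemmaAlpha} and \ref{beta}. There is essentially no obstacle here — all the genuine combinatorial content lives in Lemma \ref{gamma}, whose three-way case analysis (adding a single box to the top of a column, trimming a column down to the height of its right neighbour, or emptying a column altogether) has already been carried out. The only points worth a line of care are that the toggle indices and the winching indices line up exactly, which they do by construction of ${\cal T}_{\nu}$ and $\mbox{WZ}_{\nu}$, and that $\mbox{WZ}_i$ really does map $S_a$ into $S_a$ — implicit already, since $\gamma(\sigma_{C_j}(I))$ manifestly lies in $S_a$.
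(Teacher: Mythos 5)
Your proposal is correct and matches the paper's (implicit) reasoning: the paper states this corollary without further proof precisely because it follows by composing the single-toggle intertwining identity of Lemma \ref{gamma} across the $a$ column toggles, which is exactly the induction you spell out. The observation that ${\cal T}_{\nu}$ and $\mbox{WZ}_{\nu}$ are built in the same order, so no re-indexing is needed, is the only point of care, and you handle it.
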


\begin{theorem}\label{zerowinchthm}
Consider an arbitrary natural number $n$ and an arbitrary permutation $\nu$ of $[n]$. With $\mbox{WZ}_{\nu}:S_{n}\rightarrow S_n$ defined as above we will have,

\begin{enumerate}

\item $\mbox{WZ}_{\nu}^{2n}(x)=x$ for all $x\in S_{n}$.
%\item The action of winching partitions the set $S_{n}$  to some orbits. Sizes  of these orbits construct the following multi set:???

\item 

 For an arbitrary $1\leq  j\leq n$, let $f_j:S_{n}\rightarrow \mathbb{R}$ be defined by:

\begin{equation}
f_{j}(x)=\begin{cases}
1,& \mbox{if } j\in x\\
0,& \mbox{otherwise.}
\end{cases}
\end{equation}

The triple $\langle S_{n}, \mbox{WZ}_{\nu}, f_{j}\rangle$ is homomesic and the average of $f_j$ along $\mbox{WZ}_{\nu}$-orbits is $1/2$. Moreover, any linear combination of $f_j$s is homomesic in $\mbox{WZ}_{\nu}$-orbits of $S_n$.

\end{enumerate}
\end{theorem}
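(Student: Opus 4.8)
My plan is to prove Theorem~\ref{zerowinchthm} by the same route used for ordinary winching in Section~\ref{winch}, adding the bookkeeping forced by the two non-incrementing branches of $\mbox{WZ}_i$ (the jump back to $x_{i-1}{+}1$ and the reset to $0$). The first step decouples everything from $\nu$: since $\mbox{WZ}_i$ changes only the coordinate $x_i$ and its value depends only on $(x_{i-1},x_i,x_{i+1})$ (with $x_0=0$, $x_{n+1}=n{+}1$), one has $\mbox{WZ}_i\circ\mbox{WZ}_j=\mbox{WZ}_j\circ\mbox{WZ}_i$ whenever $|i-j|\ge 2$; by the standard lemma on products of such path-supported maps --- the one Striker and Williams use for toggles --- any two orderings of the $\mbox{WZ}_i$'s yield conjugate permutations of $S_n$ (one passes between orderings by transposing adjacent commuting factors, which leaves the composite unchanged, and by cyclically rotating the list of factors, which conjugates the composite by the factor that wraps around). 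Hence all $\mbox{WZ}_\nu$ share a cycle type, and it suffices to treat a single convenient $\nu$.

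I would take $\nu_0=(1,\dots,n)$, for which $\mbox{WZ}_{\nu_0}=\mbox{WZ}_n\circ\cdots\circ\mbox{WZ}_1$ is, via Lemma~\ref{gamma} and Corollary~\ref{bijection2}, comotion ${\cal T}_{(1,\dots,n)}$, i.e. rowmotion on $J({\cal L}_n)$. Imitating the ordinary-winching case, I would construct an equivariant encoding of $(S_n,\mbox{WZ}_{\nu_0})$ --- a Stanley--Thomas-type word, of length $2n$ (the Coxeter number of type $D_{n+1}$) over $\{0,1\}$ --- on which $\mbox{WZ}_{\nu_0}$ acts as rotation by one position; the content of the strictly increasing part of $x$, recording a subset of $[n]$, and its complement are spread over complementary residues of $\mathbb{Z}/2n$. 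The verification is a branch-by-branch check that an increment, a jump back to $x_{i-1}{+}1$, and a reset to $0$ each realize one elementary rotation step, with the boundary indices $i=1,n$ exactly where the conventions $x_0=0$, $x_{n+1}=n{+}1$ produce the wrap-around. Since a one-step rotation of a length-$2n$ word has order $2n$, this gives $\mbox{WZ}_{\nu_0}^{2n}=\mathrm{id}$, hence part~(1) for all $\nu$ by the first step.

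For part~(2), the Stanley--Thomas word of every ideal in $J({\cal L}_n)$ has the same number of ones, and each $\mbox{WZ}_{\nu_0}$-orbit is the set of $d$ rotations of a single word of minimal period $d$ (so $d\mid 2n$); hence the orbit-sum of the indicator of a fixed position is the number of ones in a $d$-window, namely $d$ times the (orbit-independent) density of ones, so every position indicator is homomesic. Expressing $f_j$ as a fixed integer linear combination of these position indicators --- which is possible because ``$j$ occurs in $x$'' translates in the encoded word into ``one of finitely many prescribed positions holds a $1$'' --- makes $f_j$ homomesic by the proposition that linear combinations of homomesic statistics are homomesic. Finally, the complementation involution $A\mapsto[n]\setminus A$ on $S_n$, induced by the anti-automorphism of the self-dual poset ${\cal L}_n$ together with ideal complementation, sends $\mbox{WZ}_{\nu_0}$-orbits to $\mbox{WZ}_{\nu_0}$-orbits and carries $f_j$ to $1-f_j$, so the common orbit-average of $f_j$ is forced to be $\tfrac12$. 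The encoding argument repeated verbatim (exactly as for ordinary winching) handles every $\nu$, and closure under linear combinations of the $f_j$ is then immediate.

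The principal obstacle is the middle step: producing the length-$2n$ Stanley--Thomas encoding for $J({\cal L}_n)$ and checking that $\mbox{WZ}_{\nu_0}$ is precisely the one-step rotation. Unlike ordinary winching, $\mbox{WZ}_i$ has two extra branches, each of which must be matched against a single rotation step, and getting the boundary wrap-around right is where the labor lies. A secondary but genuine subtlety, already visible in small cases, is that the coordinate statistics $f_j$ are not plain position indicators of the encoded word but only (explicit) linear combinations of them, so one must identify those combinations before the homomesy and the exact value $\tfrac12$ fall out.
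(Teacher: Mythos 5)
Your proposal takes a genuinely different route from the paper. The paper proves Part~1 by tracking how the \emph{snake map} of the tuple board evolves under a ``crawl'' operation $C$ (Lemma~\ref{crawl}), showing $C^n=\mathrm{id}$ via a bijection to $2n$-bit words under \emph{block} rotation (not single-step rotation), and then noting that consecutive snakes start two rows apart (Lemma~\ref{WZ}), giving the extra factor of $2$. For Part~2, the paper argues directly on the tuple cylinder: half of it is zeros, the rest is covered by snakes, and each snake contains each value $j\in[n]$ exactly once, so each $j$ appears once per two rows and the orbit average of $f_j$ is $1/2$. Your plan instead posits an equivariant bijection from $S_n$ to $2n$-bit words on which $\mbox{WZ}_{\nu_0}$ is literal one-step rotation. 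That is not what the paper constructs, and you rightly flag it as the main labor.

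There is, however, a genuine gap in your Part~2 argument beyond the unconstructed encoding. You claim that $f_j$ becomes an integer linear combination of position indicators $b_i$ of the encoded word, on the grounds that ``$j$ occurs in $x$'' translates to ``one of finitely many prescribed positions holds a $1$.'' A disjunction of positions is not a linear combination of position indicators, and small cases suggest this step fails outright. For $n=2$, the single $\mbox{WZ}_{(1,2)}$-orbit is $(0,0)\to(0,1)\to(0,2)\to(1,2)$, and if you fix any rotation-equivariant bijection to the four self-complementary $4$-bit words $\{0011,1001,1100,0110\}$ you find $f_1$ equals $b_1\oplus b_2$ rather than an affine function of the $b_i$'s (the system of eight affine equations in $(c_0,c_1,c_2,c_3)$ is inconsistent in each case). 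For $n=3$ the same computation with the two orbits (sizes $6$ and $2$) again gives no affine representation of $f_1$, and also shows that even quadratic window functions like $b_1\oplus b_2$ are not homomesic. So ``homomesy of position indicators $\Rightarrow$ homomesy of $f_j$'' does not go through without a much more delicate choice of encoding than the one you gesture at, and it is not clear such an encoding exists. The paper avoids this entirely: homomesy of $f_j$ and the value $1/2$ both fall out of the cylinder count (number of occurrences of $j$ = number of snakes = half the number of rows), which is cleaner than what your complementation involution plus indicator argument would deliver even if the encoding step were repaired.

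A secondary caution: your opening reduction establishes only that all $\mbox{WZ}_\nu$ are conjugate (same cycle type), which suffices for Part~1 but not automatically for Part~2, since the conjugating map need not preserve $f_j$. You acknowledge this by saying the encoding argument ``handles every $\nu$,'' but the snakes --- and hence any encoding built from them --- depend on $\nu$, so this needs to be carried out for general $\nu$ rather than inherited from $\nu_0$. The paper's snake definitions (Definition~\ref{snak}) are $\nu$-dependent precisely so the cylinder count works uniformly.
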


We will prove the above theorem in Section \ref{winch}.  Given the bijection in Corollary \ref{bijection}, Theorem \ref{zeromain} is a straightforward consequence  of Theorem \ref{zerowinchthm}.

 %%%%%%%%%%%%%%%%%
 %%%%%%%%%%%%%%%%%
 %%%%%%%%%%%%%%%%%  CHAPTER TWO
 %%%%%%%%%%%%%%%%%%

\section{Some homomesy results in the comotion-orbits of $J({\cal Q}_{a,b})$, $J({\cal L}_{a})$, and $J({\cal U}_{a})$.}\label{results}

The following homomesy results can be easily verified using Theorem \ref{main}.

 \begin{con}\label{sizematters}
Let $\cal P$ be  ${\cal Q}_{a,b}$ or ${\cal L}_a$. Consider an arbitrary natural number $a$, an arbitrary  permutation $\nu$, and   ${\cal T}_{\nu}:J({\cal P})\rightarrow J({\cal P})$ as defined in \ref{Comotion}.
 We define the size function, $f:J({\cal P})\rightarrow\mathbb{R}$ as,  $\forall I, f(I)=\vert I\vert$. The triple
$\langle J({\cal P}), {\cal T}_{\nu}, f \rangle$ is homomesic for any choice of ${\nu}$ . 
 \end{con}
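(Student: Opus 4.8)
The plan is to exhibit the size statistic $f(I)=|I|$ as a linear combination of the homomesic statistics already produced by Theorems~\ref{main} and~\ref{zeromain}, and then to invoke the proposition that any linear combination of homomesic functions is homomesic. In both posets the starting point is the identity $|I|=\sum_{i=1}^{a}|C_i(I)|$, so it suffices to control the multiset of column sizes.

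For $\cal P={\cal Q}_{a,b}$, I would use the $0$-mesic functions $d_{i,j}=g_{i,j}-g_{a+1-i,\,b-j}$ of Theorem~\ref{main}, where $g_{i,j}(I)=1$ exactly when $|C_i(I)|=j$. Since $\sum_{j=0}^{b} g_{i,j}(I)=1$ for each $i$, we have $|C_i(I)|=\sum_{j=0}^{b} j\,g_{i,j}(I)$ and hence $|I|=\sum_{i,j} j\,g_{i,j}(I)$. Reindexing the second term of $\sum_{i,j} j\,d_{i,j}$ by the involution $(i,j)\mapsto(a+1-i,\,b-j)$ of $[a]\times\{0,1,\dots,b\}$ and using that $\sum_{i,j} g_{i,j}(I)=a$ for every $I$ gives $\sum_{i,j} j\,d_{i,j}=2|I|-ab$, so that $|I|=\tfrac12\bigl(ab+\sum_{i,j} j\,d_{i,j}\bigr)$. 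As each $d_{i,j}$ is $0$-mesic in ${\cal T}_\nu$-orbits and the remaining summand is the constant $ab/2$, that proposition yields that $\langle J({\cal Q}_{a,b}),{\cal T}_\nu,f\rangle$ is homomesic, with orbit average $ab/2$. (Equivalently, one can write $|I|=\sum_j j\,s_j-\binom{a+1}{2}$ using the homomesic statistics $s_j$ of Theorem~\ref{main}.)

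For $\cal P={\cal L}_a$ the argument is even more direct. With $s_{i,j}(I)=1$ exactly when $|C_i(I)|=j$, as in Theorem~\ref{zeromain}, we have $|C_i(I)|=\sum_{j=0}^{a} j\,s_{i,j}(I)$, hence $|I|=\sum_{j=1}^{a} j\,s_j$, where $s_j=\sum_{i=1}^{a}s_{i,j}$. Theorem~\ref{zeromain} states that each $s_j$ is homomesic in ${\cal T}_\nu$-orbits, so $f$ is once more a linear combination of homomesic functions, hence homomesic; its orbit average is $\sum_{j=1}^{a} j\cdot\tfrac12=\tfrac{a(a+1)}{4}$, since under the bijection $\gamma$ each $s_j$ corresponds to an $f_j$ of Theorem~\ref{zerowinchthm} with orbit average $1/2$.

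I do not expect a genuine obstacle here: the whole content lies in Theorems~\ref{main} and~\ref{zeromain}, and the only care needed is bookkeeping the ranges of the index $j$. It is worth remarking why ${\cal U}_a$ is deliberately excluded from the statement: Theorem~\ref{mainU} only delivers homomesy for statistics of the form $\sum_i f(|C_i(I)|+2i+1)$ with $f$ having the same average on every $[i,2a]$ with $i$ odd, and the linear weight $f(x)=x$ needed to recover $|I|=\sum_i|C_i(I)|$ fails this requirement (the average of $\{i,\dots,2a\}$ is $\tfrac{i+2a}{2}$, which depends on $i$), so the size statistic on $J({\cal U}_a)$ is not covered by these methods.
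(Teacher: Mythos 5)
Your proposal is correct and follows essentially the same route as the paper: express $|I|=\sum_i|C_i(I)|$ as a linear combination of the homomesic statistics of Theorems~\ref{main} and~\ref{zeromain} and invoke linearity. For ${\cal Q}_{a,b}$ you lead with the $d_{i,j}$ decomposition (and the resulting formula $|I|=\tfrac12\bigl(ab+\sum_{i,j}j\,d_{i,j}\bigr)$) rather than the $s_j$-based one, but you also state the $s_j$ version, which is what the paper writes; the added remark on why ${\cal U}_a$ is excluded is accurate and a useful clarification the paper omits.
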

 
\begin{proof}
For ${\cal P}={\cal Q}_{a,b}$, $f=\sum_{i=1}^{a} i ~s_{i} - a(a+1)/2$. For  ${\cal P}={\cal L}_a$, $f=\sum_{i=1}^{a} i ~s_{i} $.
In both cases $f$ is a linear combination of $f_i$ using Theorems \ref{main} and \ref{zeromain} we will have the result.  
\end{proof}

\begin{con}
Consider the lattice ${\cal Q}_{a,b}$ and an arbitrary permutation $\nu$ of $[a]$.  Let $x\in[a]\times[b]$.  We define the antipodal
function $A:[a]\times[b]\rightarrow [a]\times[b]$ by $A(x)=y$ where $x=(i,j)\Leftrightarrow y=(a-i+1,b-j+1)$.
For $I\in J({\cal Q}_{a,b})$ and $x\in [a]\times [b]$, we define the characteristic function ${\cal I}_{ I}(x): [a]\times[b]\rightarrow \{0,1\}$s follows:

\begin{equation}
{\cal I}_{ I}(x)=\begin{cases}
1 & \mbox{if } x\in{\cal I}\\
0 & \mbox {otherwise} 
\end{cases}
\end{equation}

For any arbitrary  $x\in [a]\times [b]$ let $h: J({\cal Q}_{a,b}) \rightarrow \{0,1,-1\}$ be given by
$ h({ I})= {\cal I}_{ I}(x)-(1-{\cal I}_{I}(A(x)))$.  Then $h$ is $0-$mesic in ${\cal T}_{\nu}-$orbits of $J({\cal Q}_{a,b})$.
In other words, we have \textbf{central antisymmetry}, i.e. the average of number of ideals that contain $x$ is equal to the number
of ideals that do not contain $A(x)$.
\end{con}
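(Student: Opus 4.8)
The plan is to exhibit $h$ as an explicit nonnegative-integer combination of the $0$-mesic functions $d_{i,j}$ supplied by Theorem~\ref{main}, and then conclude by the elementary fact that any linear combination of functions that are $0$-mesic in ${\cal T}_{\nu}$-orbits is again $0$-mesic.

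First I would translate the characteristic function ${\cal I}_{I}$ into the language of column sizes, since the $d_{i,j}$ are defined through the $g_{i,j}$, which record column sizes. Because $I$ is an order ideal of ${\cal Q}_{a,b}$, a point $(i,j)$ lies in $I$ precisely when $\vert C_i(I)\vert \ge j$; so, writing $x=(i,j)$, we have ${\cal I}_{I}(x)=\sum_{k=j}^{b} g_{i,k}(I)$. Similarly $A(x)=(a{+}1{-}i,\, b{+}1{-}j)$, whence ${\cal I}_{I}(A(x))=\sum_{k=b{+}1{-}j}^{b} g_{a{+}1{-}i,k}(I)$. I would then use the trivial identity $\sum_{k=0}^{b} g_{a{+}1{-}i,k}(I)=1$ (the column $C_{a{+}1{-}i}$ has exactly one size, somewhere in $\{0,\dots,b\}$) to rewrite the latter sum as $1-\sum_{k=0}^{b{-}j} g_{a{+}1{-}i,k}(I)$.

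Substituting into $h(I)={\cal I}_{I}(x)-\bigl(1-{\cal I}_{I}(A(x))\bigr)$ collapses the additive constants and yields $h(I)=\sum_{k=j}^{b} g_{i,k}(I)-\sum_{k=0}^{b{-}j} g_{a{+}1{-}i,k}(I)$. Reindexing the second sum by $k\mapsto b{-}k$ turns it into $\sum_{k=j}^{b} g_{a{+}1{-}i,\,b{-}k}(I)$, so that $h=\sum_{k=j}^{b}\bigl(g_{i,k}-g_{a{+}1{-}i,\,b{-}k}\bigr)=\sum_{k=j}^{b} d_{i,k}$. By Theorem~\ref{main} each $d_{i,k}$ is $0$-mesic in ${\cal T}_{\nu}$-orbits, and hence so is the finite sum $h$; this is exactly the asserted central antisymmetry, namely that within every orbit the number of ideals containing $x$ equals the number of ideals not containing $A(x)$.

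There is no substantive obstacle here — everything reduces to a bookkeeping identity. The only points demanding care are the correct alignment of the antipodal map $A$ with the reindexing of column sizes (so that the $j$-th column-size threshold for $x$ matches the complementary threshold for $A(x)$) and the handling of boundary conventions, in particular that $g_{i,0}(I)=1$ exactly when $C_i(I)=\emptyset$ and that $\sum_{k=0}^{b} g_{i,k}\equiv 1$ on all of $J({\cal Q}_{a,b})$. With those fixed, the equality $h=\sum_{k=j}^{b} d_{i,k}$ is a one-line verification, and the corollary follows immediately.
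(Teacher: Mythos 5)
Your proposal is correct and follows essentially the same route as the paper: translate membership in the ideal into the column-size statistics $g_{i,k}$, observe that $h$ telescopes into $\sum_{k=j}^{b} d_{i,k}$, and invoke the $0$-mesy of the $d_{i,k}$ from Theorem~\ref{main}. The only cosmetic difference is that you first express ${\cal I}_I(A(x))$ as a sum of $g$'s and then complement via $\sum_{k=0}^{b} g_{a+1-i,k}\equiv 1$, whereas the paper directly writes $1-{\cal I}_I(A(x))$ as the complementary sum; the underlying identity and the use of Theorem~\ref{main} are identical.
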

 
 \begin{proof}
 Consider arbitrary ${I}\in{\cal Q}_{a,b}$ and $x=(x_{1},x_{2})\in [a]\times[b]$.  Then
 
 \begin{equation}\label{11}
 \begin{array}{ll}
 I_{{\cal I}}(x)=1 \Leftrightarrow (x_{1},x_{2}) \in {\cal I} \Leftrightarrow \vert C_{x_{1}}(I)\vert \geq x_{2} \\
 \Rightarrow I_{\cal I}(x)= \sum_{j=x_{2}}^{b} g_{x_1,j}.
 \end{array}
 \end{equation}
 
 Similarly, 
 
  \begin{equation}\label{12}
 \begin{array}{ll}
1- {\cal I}_{{I}}(A(x))=1 \Leftrightarrow (a-x_{1}+1,b-x_{2}+1) \notin { I} \Leftrightarrow \vert C_{a-x_{1}+1}(I)\vert <b- x_{2}+1 \Leftrightarrow \\ \vert C_{a-x_{1}+1}(I)\vert \leq b- x_{2}  
 \Rightarrow 1-{\cal I}_{ I}(A(x))= \sum_{j=0}^{b-x_{2}} g_{a-x_1+1,j}=  \sum_{j=x_{2}}^{b} g_{a-x_1+1,b-j}.
 \end{array}
 \end{equation} 
 
 By Equations \ref{11} and \ref{12}, we have $h_{x}({\cal I})= \sum_{j=x_{2}}^{b} g_{x_1,j}-g_{a-x_1+1,b-j}$. Employing Theorem \ref{main} we deduce that
$h_{x}$ is $0-$mesic for any arbitrary $x\in[a]\times [b]$.
 \end{proof}

\begin{con}
Let $\cal P$ be one of ${\cal Q}_a$ or ${\cal U}_a$. Consider arbitrary ${  I}\in J({\cal P })$. We denote the \textbf{rank-alternating} cardinality of $ I$ by ${\cal R}({ I})$ and we define it as
${\cal R}({ I})=\sum_{(i,j)\in{  I}} (-1)^{i{+j}}$. The triple $\langle J({\cal P}), {\cal T}_{\nu}, {\cal R}\rangle$ is homomesic for any arbitrary permutation $\nu$ of $[a]$.
\end{con}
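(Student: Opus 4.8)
The plan is to write ${\cal R}(I)$, up to an additive constant, as a linear combination of statistics already proved to be homomesic, and then invoke the fact that homomesy passes to linear combinations. The first step is purely geometric: for a fixed column index $i$, the set $C_i(I)$ is an initial segment of the $i$-th column of ${\cal P}$, so its contribution to ${\cal R}(I)$ is a truncated alternating sum of $\pm1$'s that begins with $(-1)^{\sigma_i}$, where $\sigma_i$ is the coordinate–sum of the bottom element of column $i$. Such a sum equals $0$ when $|C_i(I)|$ is even and $(-1)^{\sigma_i+1}$ when $|C_i(I)|$ is odd. In ${\cal Q}_{a,b}$ the bottom of column $i$ is $(i,1)$, so $\sigma_i=i+1$ and
\[
{\cal R}(I)=\sum_{i=1}^{a}(-1)^{i+1}\big[\,|C_i(I)|\text{ odd}\,\big],
\]
whereas in ${\cal U}_a$ the bottom of column $i$ is $(i,a{+}1{-}i)$, so $\sigma_i=a{+}1$ does not depend on $i$ and
\[
{\cal R}(I)=(-1)^{a+1}\sum_{i=1}^{a}\big[\,|C_i(I)|\text{ odd}\,\big].
\]

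For ${\cal Q}_{a,b}$ I would then substitute $\big[\,|C_i(I)|\text{ odd}\,\big]=\tfrac12\bigl(1-(-1)^{|C_i(I)|}\bigr)$ to obtain
\[
{\cal R}(I)=\tfrac12\sum_{i=1}^{a}(-1)^{i+1}-\tfrac12\sum_{i=1}^{a}(-1)^{\,|C_i(I)|+i+1}.
\]
The key observation is that the summand $(-1)^{|C_i(I)|+i+1}$ depends on column $i$ only through the value $|C_i(I)|+i$, which is exactly the quantity whose level sets define the statistics $s_j$ of Theorem \ref{main}; grouping the columns by the value $j=|C_i(I)|+i$ rewrites the second sum as $-\sum_j(-1)^j s_j(I)$. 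Hence ${\cal R}$ equals a constant plus $\tfrac12\sum_j(-1)^j s_j$, a linear combination of homomesic statistics, and Theorem \ref{main} settles this case. (This manipulation is precisely what fails for ${\cal L}_a$: the homomesic statistics of Theorem \ref{zeromain} record $|C_i(I)|$ itself, with no shift by $i$, so the column-dependent sign $(-1)^{i+1}$ cannot be absorbed into a function of $j$ alone — which is presumably why ${\cal L}_a$ is absent from the statement.)

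For ${\cal U}_a$ it remains only to see that $h(I):=\sum_{i=1}^{a}\big[\,|C_i(I)|\text{ odd}\,\big]$ is homomesic. Using the bijection $\beta$ of Lemma \ref{beta}, whose coordinates satisfy $\beta_j(I)=|C_{a+1-j}(I)|+2j-1$ with $2j-1$ odd, one gets $\big[\,|C_{a+1-j}(I)|\text{ odd}\,\big]=\big[\,\beta_j(I)\text{ even}\,\big]$, so $h(I)=\sum_{j=1}^{a}f(\beta_j(I))$ with $f(m)=\big[\,m\text{ even}\,\big]$. This is exactly the form of statistic treated by Theorem \ref{mainU} (equivalently Theorem \ref{winchlowthm}), so the only thing left to check is that $f$ has the same average over every interval $\{i,i+1,\dots,2a\}$ with $i$ odd; each such interval is a block of consecutive integers of even length $2a-i+1$, so exactly half of its elements are even and the average is $\tfrac12$ in all cases. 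Theorem \ref{mainU} then gives that $h$, hence ${\cal R}=(-1)^{a+1}h$, is homomesic.

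The within-column cancellation and the parity bookkeeping are routine; the genuinely load-bearing point — and the step I expect to need the most care — is the coincidence in the ${\cal Q}_{a,b}$ case that the sign $(-1)^{i}$ attached to column $i$ merges with $(-1)^{|C_i(I)|}$ into a sign governed by the single statistic $|C_i(I)|+i$ that Theorem \ref{main} controls; getting the index reversal in $\beta$ (and the matching shift in Theorem \ref{mainU}) lined up correctly for ${\cal U}_a$ is the other place where it is easy to slip.
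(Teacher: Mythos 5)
Your proof is correct. For ${\cal U}_a$ the argument is essentially the paper's: both reduce ${\cal R}(I)=(-1)^{a+1}\sum_{i}\bigl[\,|C_i(I)|\ \text{odd}\,\bigr]$ to an application of Theorem~\ref{mainU} with a parity indicator $f$ whose average on every interval $[i,2a]$ with $i$ odd is $1/2$ (these intervals have even length). For ${\cal Q}$ you take a genuinely different route. The paper pairs each cell $x$ with its antipode $A(x)$, writes $2{\cal R}(I)$ as a sum of the central-antisymmetry functions $h_x$ from the preceding corollary, and invokes their $0$-mesy; you instead fold the within-column alternation into the statistic $|C_i(I)|+i$ and express ${\cal R}$ directly as a constant plus $\tfrac12\sum_j(-1)^{j}s_j$, then cite Theorem~\ref{main}. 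Both are valid. Your decomposition is more direct (it bypasses the antipodal corollary entirely), and since it never invokes an antipodal pairing it works uniformly for ${\cal Q}_{a,b}$ with any $a,b$, whereas the paper's antipodal sign computation (written as $(-1)^{2a-(i+j)+2}$) implicitly uses $a=b$, i.e.\ the ${\cal Q}_a$ case the statement is restricted to. Your closing remark about why ${\cal L}_a$ is absent is also on point: Theorem~\ref{zeromain} controls level sets of $|C_i(I)|$ with no shift by $i$, so the column-dependent sign $(-1)^{i+1}$ cannot be absorbed into a function of a single orbit-controlled quantity.
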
 

\begin{proof}
We will first consider the case when ${ I}\in J({\cal Q}_{a,b})$. In this case we have: 
%$${\cal R}({\cal I})=\sum_{\substack{i ; i \text{ even} \\  |C_i|\text{odd} }}1 -  \sum_{\substack{i ; i \text{ odd} \\  |C_i|\text{odd} }}1$$ 

%Let  ${\cal P }=[a]\times [a]$ and  ${\cal X}\subset {\cal P}=\{x=(i,j)| i<j \mbox{ or } i=j\leq \lfloor a/2\rfloor\}$. For even $a$,
\begin{equation}
\begin{array}{ll}
2~{\cal R}({ I})=  \sum_{x=(i,j)\in{\cal P}} (-1)^{i+j} {\cal I}_{I}(x)= \sum_{x=(i,j)} (-1)^{i+j}{\cal I}_{I}(x) + \sum_{x=(i,j)} (-1)^{i+j} {\cal I}_{I}(x)\\
\\
%\Rightarrow {\cal R}({\cal I})=   \sum_{x=(i,j)\in {\cal X}} (-1)^{i+j} I_{\cal I}(x) + \sum_{x=(i,j)\in {\cal X}} (-1)^{a-i+1+a-j+1} I_{\cal I}(A(x))\\
%\\
\Rightarrow2~{\cal R}({I})=   \sum_{x=(i,j)\in {\cal X}} (-1)^{i+j} {\cal I}_{I}(x) + (-1)^{2a-(i+j)+2} {\cal I}_{I}(A(x))\\ 
\\
=   \sum_{x=(i,j)\in {\cal X}} (-1)^{i+j} h(x)+1.
\\
%\Rightarrow {\cal R}({\cal I})
\end{array}
\end{equation}

In the case where ${ I}\in J({\cal U}_a)$ we have:

$${\cal R}({ I})=(-1)^{a+1}\sum_{i ;  |C_i|\text{odd} }1. $$ 

We define the function $f: \mathbb{N}\rightarrow \{0,1\}$  as follows: $f(x)=1 \text{ iff } x \text{ odd}, f(x)=0 \text{ otherwise}. $
Note that the average of $f$ in any $[i,2a]$ that $i$ is odd is equal to $1/2$. Therefore, by Theorem \ref{mainU} we will have the result.  

%In the case where $a$ is odd we have ${\cal R}({\cal I})= \sum_{x=(i,j)\in {\cal X}} (-1)^{i+j} I_{\cal I}(x) + \sum_{x=(i,j)\in {\cal P- X}}
%(-1)^{i+j} I_{\cal I}(x)+ (-1)^{(a+1)} I_{\cal I}(\lceil a/2 \rceil,\lceil a/2 \rceil)$. Similar calculations show that we can write
%${\cal R}({\cal I})$ as a linear combination of $h(x), x\in {\cal X}$. Therefore, by Theorem \ref{main}, we will have the result.  
\end{proof}
%$= \sum_{i=1}^{a}\vert \{i|\vert C_{i}(I)\vert \mbox{ is odd and } i \mbox{ is odd} \}  - \sum_{i=1}^{a}\vert \{i|\vert C_{i}(I)\vert \mbox{ is odd and } i %\mbox{ is even} \} + \sum_{i=1}^{a}\vert \{i|\vert C_{i}(I)\vert \mbox{ is} $ $\mbox{odd and } i \mbox{ is even} \} -  \sum_{i=1}^{a}\vert \{i|\vert C_{i}(I%)\vert \mbox{ is odd and } i \mbox{ is even} \}   $

%%%%%%%%%%%%%%%%%%%%%%%%%%%%%%%%%%%%%%%%%%%%%%%%%
%%%%%%%%%%%%%%%%%%%%%%%%%%%%%%%%%%%%%%%%%%%%%%%%%
%%%%%%%%%%%%%%%%%%%%%%%%%%%%%%%%%%%%%%%%%%%%%%%%%
%%%%%%%%%%%%%%%%%%%%%%%%%%%%%   Winching 
%%%%%%%%%%%%%%%%%%%%%%%%%%%%%%%%%%%%%%%%%%%%%%%%%

\section{Homomesy in winching}\label{winch} 

In this section we will prove Theorems \ref{winchthm}, \ref{winchlowthm}, and \ref{zerowinchthm}. The concepts of \textbf{tuple board} and \textbf{snake} are the prime definitions of this section, and they help us understand the orbit structure and homomesy in winching.

Fix $k$, for arbitrary $\nu$ a permutation of $[k]$, let ${\cal F}_{\nu}$ be one of $W_{\nu}$, $\underline{W}_{\nu}$ or $\mbox{\emph{WZ}}_{\nu}$. Let $S=S_{k}$ if ${\cal F}=\mbox{\emph{WZ}}_{\nu}$ and $S=S_{k,m}$ otherwise. 
We define a {tuple board}  as follows:

\begin{defin}\label{tupleA}

Consider  $x\in S$ and $\nu$ a permutation of $[k]$.
We write $x, {\cal F}_{\nu}(x), {\cal F}^{2}_{\nu}(x), \dots $ in separate, consecutive rows as depicted below.
Let $TB(x)=[x^1,x^2,\dots]$  be such a table, where $TB(i,\cdot )=x^i=(x^i_{1},\dots, x^i _{k})$ and $ x^{i}={\cal F}^{i{-}1}_{\nu}(x)$.
We will have a board looking as follows: 

\medskip
\vspace{5 mm}
\begin {tabular}{l|l|l|l|}
 \cline{4-4}\cline{3-3}\cline{2-2}
row 1 ($x^{1}$)&$x^{1}_{1}$&\dots&$x^{1}_{k}$\\ \cline{4-4}\cline{3-3}\cline{2-2}
row 2 ($x^2$)&$x^{2}_{1}$&\dots&$x^{2}_{k}$\\ \cline{4-4}\cline{3-3}\cline{2-2}
row 3 ($x^3$)& $x^{3}_{1}$&\dots&$x^{3}_{k}$\\
\dots&\dots &\dots &\dots\\
\dots&\dots &\dots &\dots
\end{tabular}

\medskip
{\footnotesize
\textbf{Figure 2.} A tuple board. 
}

\vspace{5 mm}

$TB(x)$ is called the {tuple board} of $x$.
Since ${\cal F}_{\nu}$ is a permutation, there is some $n$ such that ${{\cal F}_{\nu}}^{n+1}(x)=x$. Therefore, we can also define a cylinder corresponding to the orbit containing $x$:

Consider ${\cal O}$, an ${\cal F}_{\nu}$-orbit of  $S$ which is produced by applying ${\cal F}_{\nu}$ consecutively to $x$.
We define the \textbf{tuple cylinder} $TS({\cal O})$ to be the cylinder that is produced by attaching the first and the $n{+}1$st row of $TB(x)$. 
Since $\cal O$ is an orbit it is more natural to think of a tuple board as a cylinder. We will use the terms interchangeably in this text.

\end{defin}

Notice that  any cell in a  tuple board contains a number from the set $\{0,1,2,\dots , m\}$. In what comes in the following we will introduce the notion of snakes. Given a tuple board $T$, any snake in it,  is a sequence of adjacent cells in $T$ that contain the numbers $1,2,\dots m$. The mathematical definition of a snake comes in the following:

%\begin{remark}
%If $x\in {\cal O}$, the elements in $i$th column of $TB(x)$ are the same as the elements in $i$th column of $TS({\cal O})$. \end{remark}

\begin{defin}\label{snak}
For arbitrary $\nu=(\nu_{1},\nu_{2},\dots,\nu_{k})$ a permutation of $[k]$ and $x\in S$, let $TB=TB(x)$ be the tuple board of $x$ as defined in Definition \ref{tupleA}.
Considering $T=\{TB(i,j)| 1\leq i\leq n,1\leq j\leq k \}$, we
define a  \textbf{snake} $s=(s_{f}, s_{f+1}, \dots , s_{t})$ as follows: $s$ is a maximal sequence of $s_i$s such that each $s_{i}$ is a cell in the tuple board containing $i$,
and for $i>f$, $s_{i}={\cal M} (s_{i-1})$, where $\cal M$ is defined as follows:
\begin{equation}
{\cal M}(T(i,j))= 
\begin{cases}
T(i{+}1,j) & \mbox{if } T(i{+}1,j)=T(i,j)+1.\\
T(i,j{+}1) & \mbox{if } T(i,j{+}1)=T(i,j)+1  ,\quad T(i{+}1,j)\neq T(i,j)+1 \\ 
&\mbox{ and } \nu(j)<\nu(j+1). \\
 T(i{+}1,j{+}1) & \mbox{if } T(i{+}1,j{+}1)=T(i,j)+1 ,   T(i{+}1,j)\neq T(i,j)+1 \\
 & \mbox{ and } \nu(j)>\nu(j+1). \\ 

\end{cases}
\end{equation}

\end{defin}

\begin{defin}

Consider $T=TB=[x^1,x^2,\dots,x^n]$ as defined previously for $x\in S$.
In what follows row numbers in a tuple board are understood modulo $n$.

 Consider $s$  a snake in $T$. We define  \textbf{snake map}  $\cal S$, a function that associates any snake with an element in $\mathbb{N}^{k}$ as follows: for an arbitrary snake $s$, ${\cal S}(s)= (c_{1},c_{2},\dots,c_{k})$, where $c_{j}=\vert \{i|T(i,j)\in s\}\vert$. 

\end{defin}

\medskip

\begin {tabular}{|l|l|l|l|l|l|}
\dots &\dots &\dots&\dots &\dots\\
 \cline{4-4}\cline{3-3}\cline{2-2}\cline{5-5}\cline{1-1}
$1$&$2$&$?$&$?$&$?$\\ \cline{4-4}\cline{3-3}\cline{2-2}\cline{5-5}\cline{1-1}
$?$&$3$&$?$&$?$&$?$\\ \cline{4-4}\cline{3-3}\cline{2-2}\cline{5-5}\cline{1-1}
$?$& $4$&$5$&$?$&$?$\\\cline{4-4}\cline{3-3}\cline{2-2}\cline{5-5}\cline{1-1}
$?$ &$?$ &$6$&$?$ &$?$\\\cline{4-4}\cline{3-3}\cline{2-2}\cline{5-5}\cline{1-1}
$?$ &$?$ &$7$&$?$ &$?$\\\cline{4-4}\cline{3-3}\cline{2-2}\cline{5-5}\cline{1-1}
$?$& $?$&$?$&8&$9$\\\cline{4-4}\cline{3-3}\cline{2-2}\cline{5-5}\cline{1-1}
$?$& $?$&$?$&$?$&$10$\\\cline{4-4}\cline{3-3}\cline{2-2}\cline{5-5}\cline{1-1}
\dots &\dots &\dots&\dots &\dots\\
\end{tabular}
\quad \quad 
\begin {tabular}{|l|l|l|l|l|l|}

\dots &\dots &\dots&\dots &\dots\\
 \cline{4-4}\cline{3-3}\cline{2-2}\cline{5-5}\cline{1-1}
%$1$&$?$&$?$&$?$&$?$\\ \cline{4-4}\cline{3-3}\cline{2-2}\cline{5-5}\cline{1-1}
$?$&$4$&$?$&$?$&$?$\\ \cline{4-4}\cline{3-3}\cline{2-2}\cline{5-5}\cline{1-1}
$?$& $?$&$5$&$?$&$?$\\\cline{4-4}\cline{3-3}\cline{2-2}\cline{5-5}\cline{1-1}
$?$ &$?$ &$6$&$?$ &$?$\\\cline{4-4}\cline{3-3}\cline{2-2}\cline{5-5}\cline{1-1}
$?$ &$?$ &$7$&$?$ &$?$\\\cline{4-4}\cline{3-3}\cline{2-2}\cline{5-5}\cline{1-1}
$?$& $?$&$8$&9&$10$\\\cline{4-4}\cline{3-3}\cline{2-2}\cline{5-5}\cline{1-1}
$?$& $?$&$?$&$?$&$?$\\\cline{4-4}\cline{3-3}\cline{2-2}\cline{5-5}\cline{1-1}
\dots &\dots &\dots&\dots &\dots\\
\dots &\dots &\dots&\dots &\dots\\
\end{tabular}
\quad \quad 
\begin {tabular}{|l|l|l|l|l|l|}

\dots &\dots &\dots&\dots &\dots\\
\dots &\dots &\dots&\dots &\dots\\
 \cline{4-4}\cline{3-3}\cline{2-2}\cline{5-5}\cline{1-1}
 $0$&$1$&$?$&$?$&$?$\\ \cline{4-4}\cline{3-3}\cline{2-2}\cline{5-5}\cline{1-1}
$?$&$?$&$2$&$?$&$?$\\ \cline{4-4}\cline{3-3}\cline{2-2}\cline{5-5}\cline{1-1}
$?$&$?$&$3$&$4$&$5$\\ \cline{4-4}\cline{3-3}\cline{2-2}\cline{5-5}\cline{1-1}
$?$& $?$&$?$&$?$&$?$\\\cline{4-4}\cline{3-3}\cline{2-2}\cline{5-5}\cline{1-1}
$?$ &$?$ &$?$&$?$ &$?$\\\cline{4-4}\cline{3-3}\cline{2-2}\cline{5-5}\cline{1-1}
%$?$ &$?$ &$7$&$?$ &$?$\\\cline{4-4}\cline{3-3}\cline{2-2}\cline{5-5}\cline{1-1}
%$?$& $?$&$8$&9&$10$\\\cline{4-4}\cline{3-3}\cline{2-2}\cline{5-5}\cline{1-1}
\dots &\dots &\dots&\dots &\dots\\
\dots &\dots &\dots&\dots &\dots\\

\end{tabular}

\vspace{5 mm}
{\footnotesize

\noindent
\textbf{Figure. 3.}\\

\noindent
\begin{tabular}{lll}
 A tuple board corresponding to $W_{\nu}$&  A tuple board corresponding to $W_{\nu}$ & A tuple board corresponding\\  
$x\in S_{5,10}$ and   &    with lower bounds $(2,4,6,7,8)$. & to ${\mbox{\emph WZ}}_{\nu}$. $x\in S_5$  and   \\
$\nu=(1,2,4,3,5)$. &  $x\in S_{5,10}$ and $\nu=(1,3,2,4,5)$. & $\nu=(1,3,2,4,5)$ \\
The snake map is $(1,3,3,1,2)$.& The snake map is $(0,1,4,1,1)$. & The snake map is $(0,1,2,1,1)$.
\end{tabular}

\medskip

}

\subsection{Proof of Theorem \ref{winchthm}}

In this subsection we prove Theorem \ref{winchthm}.

\begin{defin}\label{inverse}
Let $\bar{W}_{i}:S_{k,m}\rightarrow S_{k,m}$ be the following map: $\forall x=(x_{1}\dots x_{k})\in S_{k,m},~\bar{W}_{i}(x)=y=(y_{1},y_{2},\dots ,y_{k})$
where $\forall j\neq i, y_{j}=x_{j}$, and
\begin{equation}
 y_{i} = \begin{cases}
 x_{i+1}{-}1, & \text{if }  x_{i}=x_{i-1}+1 .\\
x_{i}{-}1, & \text{otherwise}.
  \end{cases}
  \end{equation}
Note that $\forall x\in S_{k,m}, \bar{W}_{i}\circ W_{i}(x)=x$. We call $\bar{W}_i$ \textbf{inverse winching} at index $i$.  
\end{defin}

\begin{defin}
For $\nu$ an arbitrary  permutation of ${[k]}$, $\bar{W}_{\nu}:S^{k,m}\rightarrow S^{k,m}$ is defined by $\bar{W}_{\nu}=\bar{W}_{\nu(1)}\circ \bar{W}_{\nu(2)}\circ\dots\circ W^r_{\nu(k)}$ and we have 
$\forall x\in S_{k,m}, \bar{W}_{\nu}(W_{\nu}(x))=x$. 
\end{defin}

\begin{lemma} \label{Wsnake}
Any snake in a tuple cylinder $TS({\cal O)}$ (where ${\cal O}$ is a $W_{\nu}$-orbit of $S_{k,m}$) is of length $m$, starts in the first column
of the cylinder with  $s_{1}$, and ends in the last column of the tuple cylinder with $s_m$.
\end{lemma}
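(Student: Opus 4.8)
The plan is to analyze how a snake advances through the tuple board by unpacking the definition of the transition map $\mathcal{M}$ together with the winching rule $W_\nu$. First I would set up the basic bookkeeping: within one row of $TB(x)$ the entries are strictly increasing (this is the defining property of $S_{k,m}$, preserved by $W_\nu$), and passing from row $i$ to row $i{+}1$ the full composition $W_\nu = W_{\nu(k)}\circ\cdots\circ W_{\nu(1)}$ is applied, so each coordinate $x^i_j$ changes only through the single winch $W_j$ at the moment index $j$ is processed. The key local fact I would extract is: when $W_j$ acts on a sequence whose current $j$-th entry is $v$, either the entry becomes $v{+}1$ (the ``increment'' case, when there is room below $x_{j+1}$), or it jumps down to $x_{j-1}{+}1$ (the ``wrap'' case). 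This dichotomy is exactly what the three cases of $\mathcal{M}$ track, with the tie-break on $\nu(j)$ versus $\nu(j{+}1)$ recording whether, in the linear order in which winches are applied within a row, column $j$ is updated before or after column $j{+}1$.

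Next I would prove the ``local step exists and is unique'' claim: if a cell $T(i,j)$ contains the value $c$ with $1\le c<m$, then exactly one of the three cells $T(i{+}1,j)$, $T(i,j{+}1)$, $T(i{+}1,j{+}1)$ — as selected by $\mathcal{M}$ — contains $c{+}1$, and moreover no cell with value $c{+}1$ lies anywhere else ``nearby'' that could compete. The argument is a case split on whether $W_j$ increments or wraps when producing row $i{+}1$ from row $i$. In the increment case $T(i{+}1,j)=c{+}1$ and we move straight down. In the wrap case the value $c$ disappears from column $j$ going down, but then $c{+}1$ must appear in column $j{+}1$: since the sequence is strictly increasing and the wrap was forced by $x_{j+1}=c{+}1$ (there was no room), the entry $c{+}1$ sits in position $j{+}1$ either in the same row $i$ (if column $j{+}1$ is winched after column $j$, i.e. $\nu(j)<\nu(j{+}1)$, so at the time $W_j$ fires the $(j{+}1)$-entry is still its old value $c{+}1$) or in row $i{+}1$ (if $\nu(j)>\nu(j{+}1)$, so column $j{+}1$ was already winched and by strict monotonicity still exceeds $c$; a short argument pins it to $c{+}1$). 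This matches the three branches of $\mathcal{M}$ exactly, which is why I'd run the verification branch by branch.

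With the local step understood, the global statement follows by a counting/extremality argument. A snake is, by definition, a maximal run of consecutive cells carrying values $f, f{+}1,\dots,t$. The value $1$ can appear in a cell $T(i,j)$ only with $j=1$: indeed $x^i_1$ is the smallest coordinate, and a value $1$ in column $j>1$ would be preceded in its row by a strictly smaller positive value, impossible. Dually, value $m$ can occur only in column $k$: $x^i_k$ is the largest coordinate and is the only one that can equal $m$ (entries are $\le m$ and strictly increasing, so only the last can attain the maximum). Combining: any snake, being maximal, cannot be extended below its top or its bottom, so by the local-step lemma its top cell $s_1$ must contain $1$ (else $\mathcal{M}^{-1}$ would extend it upward — here I'd note $\mathcal{M}$ is reversible along a snake, which also uses $\bar W_\nu$ from Definition \ref{inverse}) and hence sits in column $1$, and its bottom cell $s_t$ must contain $m$ and hence sits in column $k$; thus $f=1$, $t=m$, the length is $m$. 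The main obstacle I anticipate is the careful verification of the wrap case in the local step — specifically, correctly arguing that the tie-break condition $\nu(j)\lessgtr\nu(j{+}1)$ determines whether $c{+}1$ appears in row $i$ or row $i{+}1$ of column $j{+}1$ — since this is where the order of applying the individual winches within a single row interacts subtly with strict monotonicity; everything else is bookkeeping on increasing sequences.
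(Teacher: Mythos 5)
Your plan is correct and follows essentially the same route as the paper's proof: extend the snake downward using $x^{i+1}=W_\nu(x^i)$ until value $m$ is reached, and extend it upward using the inverse winching $\bar W_\nu$ until value $1$ is reached, so maximality forces $f=1$ and $t=m$. You make explicit two points the paper leaves tacit — that value $1$ can only sit in column $1$ and value $m$ only in column $k$ (forced by the strict increase $0<x_1<\cdots<x_k\le m$), which is what actually pins the endpoints to the first and last columns — and you correctly identify the delicate part as the tie-break $\nu(j)\lessgtr\nu(j+1)$ governing whether the wrapped value $c+1$ shows up in row $i$ or row $i+1$ of column $j+1$.
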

\begin{proof}
Consider  some $x\in {\cal O}$ and a snake $s$ in  the tuple board $T=TB(x)=[x^{1},\dots , x^n]$. We assume that $s=(s_{f},\dots , s_{t})$. Having, $x^{i+1}=W_{\nu}(x^{i})$, it is easy to verify that unless $t=m$, we can find a cell in $T$ to expand $s$. 
Similarly, since $x^{i-1}=\bar{W}(x^{i})$. If $\nu(j)< \nu(j+1)$, we can see: unless $f=1$, the snake $s$ can be expanded.

\end{proof}

\begin{defin}
Let $H:[m]^{[k]}\rightarrow [m]^{[k]}$ be defined as follows: $\forall x=(x_{1},\dots, x_{k}),~H(x)=y=(y_{1},\dots, y_{k})$ where
$\forall 1\leq i< k,  y_{i}=x_{i+1} $ and  $y_{k}=x_{1}$. We call $H$ the \textbf{left shift operator}. 

\end{defin}

\begin{lemma}
\label{pp}
Let  $(p,1)$ and $(q,1)$ $(p<q)$ be two cells of tuple board $T$ with value 1, such that there is no $p<i<q$ with $T(i,1)=1$.
Consider the snake $s^{p}=(s^p_{1}\dots s^{p}_{m})$ starting with $s^p_1=T(p,1)$ and ${\cal S}(s^p)=c^{p}=(c^p_{1},\dots,c^p_{k})$
its snake map; and similarly consider the snake $s^q$ and its snake map ${\cal S}(s^q)=c^q$ starting at $T(q,1)$.  Then,
\begin{itemize}
\item If $T(i,j)\in s^{p}$, we have the following:
\begin{itemize}
\item$T(i+1,j)\notin s^{p} \Rightarrow T(i+1,j)\in s^q$. 
\item If  $j>1$ then, $T(i,j{-}1)\notin s^{p} \Rightarrow T(i,j{-}1)\in s^q$.
\end{itemize}
In other words there is no gap between two consecutive snakes in the tuple board. 
 \item We have $c^q=H(c^p)$. 
\end{itemize}

\end{lemma}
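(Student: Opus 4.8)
The plan is to describe the snake $s^q$ explicitly in terms of $s^p$ and then read both assertions off that description. First I would record an elementary feature of the tuple board: since each $W_i$ maps $S_{k,m}$ to $S_{k,m}$, the tuple is still strictly increasing at the moment any $W_j$ fires inside the composite ${\cal F}_{\nu}$, so a reset of coordinate $j$ (including $j=1$, where the reset value is $x_0+1=1$) always produces a value no larger than the old value of coordinate $j$. Hence, reading downward in any column, the value either increases by exactly $1$ or drops; and since ${\cal M}$ (Definition~\ref{snak}) always prefers the straight-down step when the cell below carries value one more, a snake occupies a column for exactly the maximal increasing run starting at the cell where it entered that column, leaving the column precisely at the bottom of that run. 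In particular, in column~$1$ the runs are $1,2,3,\dots$ each starting at a value-$1$ cell, so the column-$1$ block of $s^p$ is the run at rows $p,p+1,\dots$ ending just above the next value-$1$ cell; this already gives $q=p+c^p_1$, and the column-$1$ block of $s^q$ is the next such run.

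Next I would prove, by induction on $j=1,\dots,k$, a statement of the form: \emph{the column-$j$ block of $s^q$ has the same length as the column-$(j{+}1)$ block of $s^p$ (column indices read cyclically, with ``column $k{+}1$'' meaning ``column $1$''), and its rows are obtained from the rows of the column-$(j{+}1)$ block of $s^p$ by an explicit shift determined by the relative order of the $\nu$-values at the columns involved.} The base case is the column-$1$ remark above. For the inductive step I would compare how $s^p$ leaves column $j{+}1$ with how $s^q$ leaves column $j$: by Definition~\ref{snak} a snake passes to the next column either in the same row (if the left column has the smaller $\nu$-value, i.e. the case $\nu(j)<\nu(j+1)$) or one row lower (in the case $\nu(j)>\nu(j+1)$), and by the first paragraph it occupies a column for as long as the increments persist; matching the entry cell and the length of the column-$(j{+}1)$ run of $s^p$ with those of the column-$j$ run of $s^q$ — using that the reset value of $W_{j+1}$ equals one more than a neighbouring coordinate at the instant $W_{j+1}$ acts — gives the step, and the wrap-around case $j=k$ (relating the last column of $s^q$ to the first column of $s^p$) is handled identically. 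I expect this bookkeeping to be the main obstacle: one must track which of $W_{j-1}$ and $W_{j+1}$ have already fired when $W_j$ acts (precisely the information encoded by the $\nu$-comparisons in ${\cal M}$) and push the description correctly through the wrap from column $k$ to column $1$ without an off-by-one.

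Granting the description, the second bullet is immediate, since the snake map ${\cal S}$ records only block lengths, so ${\cal S}(s^q)=(c^p_2,c^p_3,\dots,c^p_k,c^p_1)=H(c^p)$. For the first bullet, suppose $T(i,j)\in s^p$ with value $v$. If $T(i+1,j)\notin s^p$, then $(i,j)$ is the bottom of $s^p$'s column-$j$ block and $T(i+1,j)$ has value $\ne v+1$; using Lemma~\ref{Wsnake} together with the fact that ${\cal M}$ is a bijection between cells of value $<m$ and cells of value $>1$, with inverse read off from $\bar W$ of Definition~\ref{inverse} (so that every cell lies on a unique snake and the snakes partition the tuple cylinder), there is a unique snake through $T(i+1,j)$, and the row data from the induction identifies it as $s^q$. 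The left-neighbour case reduces to this one: if $j>1$ and $T(i,j-1)\notin s^p$, then $s^p$ did not enter $(i,j)$ from $(i,j-1)$, so either it entered $(i,j)$ from $(i-1,j-1)$ — in which case $T(i,j-1)$ lies directly below $T(i-1,j-1)\in s^p$ and the previous case applies — or it reached $(i,j)$ by a downward step inside column $j$, and then climbing $s^p$'s column-$j$ block to its top cell and applying the block description places $T(i,j-1)$ in $s^q$'s column-$(j-1)$ block or immediately below it (again the previous case). In every case $T(i,j-1)\in s^q$, which is the claim.
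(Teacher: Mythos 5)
Your plan is structurally the same as the paper's: induct column by column, showing that the column-$j$ block of $s^q$ runs parallel (with the appropriate $\nu$-dependent shift) to the column-$(j{+}1)$ block of $s^p$, and read $c^q = H(c^p)$ off the matching of block lengths. Your opening observation — that within any column the values rise by exactly $1$ or drop, so a snake occupies a column for precisely a maximal increasing run — is a clean unification the paper doesn't state explicitly, and you work with arbitrary $\nu$ where the paper fixes $\nu = (1,\dots,k)$ and declares the general case similar. That framing is genuinely helpful.

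Two gaps, though. First, you explicitly punt on the inductive bookkeeping (``I expect this bookkeeping to be the main obstacle''), but that bookkeeping \emph{is} the proof: the paper's Claim~2 is exactly the computation of how the reset value $x_{j-1}+1$ at the moment $W_j$ fires interacts with what $W_{j+1}$ has or has not already done, and without it the ``explicit shift'' you invoke is not established. Second, in proving the no-gap bullet you appeal to ``${\cal M}$ is a bijection... so that every cell lies on a unique snake and the snakes partition the tuple cylinder.'' That partition property is a \emph{consequence} of this lemma together with Lemma~\ref{Wsnake}, not something available before it: Lemma~\ref{Wsnake} tells you every snake reaches full length, but not that every cell is covered. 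To show $\cal M$ (or its putative inverse via $\bar W$) is everywhere defined you would have to show every cell of the board lies on a snake, which is precisely the no-gap claim. The non-circular route, which you mention but do not carry out, is to deduce coverage of $T(i{+}1,j)$ and $T(i,j{-}1)$ directly from the explicit block description produced by the induction — exactly as the paper does when it notes, after each Claim, that the absence of a gap follows from the computed positions.
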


\begin{proof}
In order to prove this lemma we fix $\nu= (1,2,\dots, k)$. The proof will be similar for any arbitrary permutation $\nu$.
To make notation simpler we drop the subscript from $W$ meaning $\nu= (1,2,\dots, k)$.

Suppose that we have the action of winching $W_{(1,2,\dots, k)}$ on $x\in S_{k,m}$ making the orbit
${\cal O}$ in $ S_{k,m}$. Moreover, suppose the tuple board corresponding to $x$ (equivalently, the tuple cylinder corresponding to $\cal O$)
is $T=TB(x)=[x^1,x^2,\dots, x^n]$ where $x^{i}=W^{i-1}(x)$ is as defined in Definition \ref{tupleA}.

\textbf{Claim 1.}  $c^p_{1}=q-p$.

Since $T(q,1)\notin s^{p}$, $c^p_{1}\leq q-p$. Moreover $c^p_{1}=c_{1} \leq q-p$ implies $s^p_{c_1+1}=T(p{+}c_1{-}1,2)=c_1+1$  meaning  $x^{c_{1}}_{2}=c_1+1$ and $x^{c_{1}}_{1}=c_1$. 
We have  $x^{c_1+1}=W(x^c_1)$, and hence $x^{c_1+1}_{1}=1$, and $T(c_1+p,1)\in s^{q}\Rightarrow c_1+p=q\Rightarrow c_1=q-p$.

Note that Claim 1 implies that there is no gap between the two snakes in column 1. (See Figure 4).

\textbf{Claim 2.} $c^q_{1}=c^p_{2}$.
For simplicity, we denote $c^{p}_{1}$ by $c_{1}$ and $c^{p}_{2}$ by $c_2$. 

We have $s^p_{1}= T(p,1)=1, s^{p}_{2}=T(p+1,1), \dots ,s^{p}_{c_{1}}=T(p+c_{1}-1,1)$.  (See Figure 4)

Then, for all $1\leq i\leq c_{2}$:

\begin{equation}\label{eq2}
\begin{array}{llll}
&s^{p}_{c_{1}+i}&=T(p+c_{1}{-}1{+}(i{-}1),2) \\
\Rightarrow   &s^{p}_{c_{1}+i}&=T(q{+}(i{-}2),2)&  (\mbox{Since } q=p+c_{1})\\
 \Rightarrow & x^{c_{1}+i-1}_{2}&=c_{1}+i
\end{array}
\end{equation}

We also have

\begin{equation}\label{eq1} s^{p}_{c_{1}+c_{2}+1}=T(q+c_{2}-2,3)=T(p+c_{1}+c_{2}-2,3) \Rightarrow x^{c_{1}+c_{2}-1}_{3}=c_{1}+c_{2}+1. 
\end{equation}

Now consider $s^q$. 
%\begin{equation}
%\begin{array}{llll}
 %s^q_{1}=1=T(q,1)=T(p+{c_{1}},1) \Rightarrow & x^{c_{1}+1}_{1}=1 &(\mbox{We already know that } x^{c_{1}+1}_{2}=T(q,2)=s^p_{c_{1}+2}>2)\\
 %W'(x^{c_{1}+1})=x^{c_{1}+2} \Rightarrow & x^{c_{1}+2}_{1}=2  \Rightarrow & {\cal M}(s^q_{1})=T(q+1,1)\\
 %\end{array}
 %\end{equation}
   For all $i, 1\leq i\leq c_{2}-1$ we have that if $x^{c_{1}+i}_{1}=i$,
 
\begin{equation}\label{eq3}  \left.
 \begin{array}{l}
  x^{c_{1}+i}_{1}=i \\
   x^{c_{1}+i}_{2}=c_{1}+i+1>i  \\
 W(x^{c_{1}+i})=x^{c_{1}+i+1}
 \end{array}
 \right\}
 \Rightarrow x^{c_{1}+i+1}_{2}=i+1\end{equation}
Therefore,  
\begin{equation}
x_{1}^{c_1+1}=1  \Rightarrow   \forall i, 1\leq i\leq c_{2}-1,~{\cal M}(s^q_{i})=T(q+i,1) 
\Rightarrow
  \forall 1\leq i\leq c_{2},~s^q_{i}=T(q+i-1,1).
\end{equation}

From Equation \ref{eq3} we can conclude $x^{c_{1}+c_{2}}_{1}=c_{2}$. By Equations \ref{eq1} and \ref{eq2}, and the fact that
$W(x^{c_{1}+c_{2}-1})=x^{c_{1}+ c_{2}}$, we have $x^{c_{1}+c_{2}}_{2}=c_{2}+1$. Hence, ${\cal M}(s^{q}_{c_{2}})=T(q+c_{2}-1,2).$

It follows that $c^q_{1}=c^p_{2}$. Moreover,  $ T(i,2)\in s^{p}\Rightarrow T(i,1)\in s^q$ and $T(i-1,2)\in s^{q}$  for any $i$ (if they are
not already in $s^p$).

Very similar to the proof of Claim 2, the following can be proved using the definitions:

\textbf{Claim 3.} Let $r < k$ with $\forall l, 1\leq l<r-1, ~c^q_{l}=c^p_{l+1}$, then $c^q_r=c^p_{r+1}$.

%For $1\leq l\leq n$, let $c^p_{l}=c^q_{l-1}$ be denoted by $c_{l}$,  $\sum_{l=1}^{r}c_{l}=c$. 

%Let $p{+}j$ be where $s^p$ starts, that is, $s^p_{c+1}=T(p+j,r+1)$ and $\forall i, 1\leq i\leq c_{r+1},~s^p_{c+i}=T(p+j+i-1,r+1)$.
%Then we have $\forall i, 1\leq i\leq c_{r+1}, x^{j+i}_{r+1}=c+i$. We also have $s^p_{c}=T(p+j,r)$ which implies $x^{j+1}_{r}=c$.

%Now consider $s^q$. Let $c'=c-c_{1}$ and

%\begin{equation}\label{17}
%s_{c'+1}=T(p+j+1,r) \Rightarrow x^{j+2}_{r}=c'+1
%\end{equation}

%If $x^{j+i}_{r}=c'+i-1$ for all $1\leq i\leq c_{r+1}$,

%\begin{equation}\label{18}
%\left.
%\begin{array}{l}
%x^{i+j}_{r+1}=c+i=c'+c_{1}+i>c'+i-1\\
%x^{i+j}_{r}=c'+i-1\\
%W'(x^{i+j})=x^{i+j+1}
%\end{array}
%\right\}\Rightarrow x^{j+i+1}_{r}=c'+i
%\end{equation}
%From Equations \ref{17} and \ref{18},
%\begin{equation}
%x^{j+2}_{r}=c'+1 \Rightarrow \forall 1\leq i\leq c_{r+1}, {\cal M}(s_{c'+i})=T(p+j+i,r) \Rightarrow \forall 1\leq i\leq c_{r+1} , s^q_{c'+i}=T(p+j+i,r).
%\end{equation}

%Moreover, $s^{p}_{c+c_{r+1}+1}= T(p+ j+c_{r+1},r+2)\Rightarrow x^{j+c_{r+1}}_{r+2}=c+c_{r+1}+1$ and
%$x^{j+c_{r+1}}_{r+1}=c+c_{r+1}$. Therefore, considering the fact that $W'(x^{j+c_{r+1}})=x^{j+c_{r+1}+1}$, we can conclude that
%$s^q_{c'+c_{r+1}+1}\in T(\cdot,r+1)$.

%Therefore, $c^q_{r}=c^{p}_{r+1}$. Moreover, for any $T(i,r+1)\in s^p,~T(i-1,r)\notin s^p\Rightarrow T(i-1,r)\in s^q$ and 
%$T(i,r-1)\notin s^p\Rightarrow T(i,r-1)\in s^q$.

Having Claims 2 and 3, by employing induction we can show: for all $ i, 1\leq i<k-1, c^q_i=c^p_{i+1}$, and that there is no gap between the snakes in any of the columns. Furthermore, since all snakes have the same length, $c^q_{k}=c^p_{1}$.$~\Box$

\hbox{}

\textbf{Proof of Theorem \ref{winchthm}, Part 1.}
Consider an $n \times k$ tuple board $T$ such that $T=TB(x)$ and $x\in S_{k,m}$.
Let's assume that $n\geq m$ (if $n<m$, append enough copies of $T$ to it until $n\geq m$). Let  $s^{1}$ be the snake that covers $T(1,1)$, $s^{2}$ the next snake immediately below $s^{1}$, and $s^{i}$
the last snake right below $s^{i-1}$.  Letting ${\cal S}(s^{1})=c=(c_{1},c_{2},\dots,c_{k})$, we have ${\cal S}(s^{i})=H^{i-1}(c)$.
The numbers in the first column of $T$ will be: $x_{1}, x_{1}+1,\dots, x_{1}{+}c_{1}{-}1, 1, 2 ,\dots c_{2},1, 2, \dots, c_{3},\dots$.
Since $\sum_{i=1}^{k}c_{i}=m $,  the $m+1$st number in the first column will be $x_{1}$. 
 Similarly, for each column $i$, the $m{+}1$st element will be $x_{i}$. Thus, $W^{m+1}(x)=x$. 

\end{proof}

 \begin{con}

 The above reasoning also shows there are exactly $k$ snakes covering an $m\times k$ tuple cylinder. 
 \end{con}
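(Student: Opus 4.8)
The plan is to read the statement off the analysis carried out in the proof of Theorem~\ref{winchthm}, Part~1, sharpened by Lemmas~\ref{Wsnake} and~\ref{pp}. By Part~1 we have $W_{\nu}^{m}(x)=x$ for all $x\in S_{k,m}$, so after possibly stacking several copies of a shorter period we may regard the tuple cylinder $TS({\cal O})$ as an array of cells $T(i,j)$ with $1\le i\le m$, $1\le j\le k$, row indices read modulo $m$. First I would record two facts about its snakes. By Lemma~\ref{Wsnake} every snake has length exactly $m$; it begins in column $1$ at a cell of value $1$ and ends in column $k$ at a cell of value $m$. Conversely, running the map ${\cal M}$ forward from any column-$1$ cell of value $1$ — and noting, as in the proof of Lemma~\ref{Wsnake}, that this never stalls before reaching the value $m$ — produces the unique maximal snake through that cell. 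Hence the number of snakes equals the number of cells of column $1$ whose value is $1$.

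Next I would order the snakes cyclically: let $s^{1}$ be the snake through $T(1,1)$, and for $i\ge 1$ let $s^{i+1}$ be the snake immediately below $s^{i}$. By Lemma~\ref{pp} there is no gap between $s^{i}$ and $s^{i+1}$, and ${\cal S}(s^{i+1})=H({\cal S}(s^{i}))$; writing ${\cal S}(s^{1})=c=(c_{1},\dots,c_{k})$, we get ${\cal S}(s^{i})=H^{i-1}(c)$, so the number of cells of $s^{i}$ lying in column $1$ equals $\big(H^{i-1}(c)\big)_{1}=c_{i}$ for $1\le i\le k$. Since $s^{1}$ has length $m$ we have $\sum_{j=1}^{k}c_{j}=m$, which is exactly the height of the cylinder. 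Hence the column-$1$ portions of $s^{1},\dots,s^{k}$ are $k$ consecutive runs of lengths $c_{1},\dots,c_{k}$ that tile the $m$ cells of column $1$ with no overlap and no gap; in particular $s^{k+1}$ starts where $s^{1}$ starts, so $s^{k+1}=s^{1}$ and $s^{1},\dots,s^{k}$ are all of the snakes. Equivalently, column $1$ contains exactly $k$ cells of value $1$, one at the top of each run, so the count from the first paragraph is $k$.

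For the word ``covering'' I would finally observe that these $k$ snakes are pairwise disjoint and so partition all $mk$ cells of the cylinder: together they contain $k\cdot m = mk$ cells, and disjointness holds because through any cell of value $v$ there passes a unique maximal snake — extend it forward by ${\cal M}$ and backward by its well-defined inverse, just as $W_{\nu}$ and $\bar{W}_{\nu}$ are mutually inverse. The only genuinely delicate point is the bookkeeping of the middle paragraph: one must check that the column-$1$ runs of the $s^{i}$ fit together with neither overlap nor gap and that exactly $k$ of them are needed before returning to the starting cell. That is precisely what Lemma~\ref{pp} (no gaps, snake maps shifted by $H$) together with Lemma~\ref{Wsnake} ($\sum_{j}c_{j}=m$, matching the cylinder's height $m$) provide; the rest is immediate.
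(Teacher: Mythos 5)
Your proof is correct and takes essentially the same approach as the paper: the paper's "proof" is a one-line appeal to the argument already given for Part 1 of Theorem~\ref{winchthm}, and you have unpacked exactly that argument — snake maps shifted by $H$, column-1 run lengths $c_1,\dots,c_k$ tiling the $m$ rows with $\sum_i c_i = m$ — while also making explicit the disjoint-covering bookkeeping that the paper leaves implicit.
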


\begin{con}
%Let ${\cal SM}$ be the set of all snake maps, that is, ${\cal SM}=\{ (c_{1},c_{2},\dots, c_{k})\in \mathbb{N}^{k}| \sum_{i=1}^{k} c_{i}=m \}$ and $Q$  the set of all tuple cylinders. We define $\Gamma:Q\rightarrow {\cal SM}$ as follows: 
Fix $k$ and $n$ and $\nu$ a permutation of $[k]$. To each tuple cylinder $T$ of size $k\times n$ corresponding to a $W_{\nu}-$orbit,  we can assign a sequence $c=(c_1,c_2,\dots,c_k)$, satisfying $\sum_{i=1}^{k}c_i=n$ where $T$ is covered by snakes $s_1, s_2, \dots ,s_k$  and for all $1\leq j\leq k$,
there is an $i$ such that ${\cal S}(s_{j})=H^i(c)$. Since filling the first column of the cylinder will impose the other numbers, this correspondence is a one to one mapping. 
\end{con}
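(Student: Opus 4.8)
The plan is to read this statement straight off the snake structure already in hand, so essentially no new ideas are needed beyond Lemmas~\ref{Wsnake} and~\ref{pp}, the proof of Theorem~\ref{winchthm}, Part~1, and the count of snakes just obtained. First note that $n$ is forced to equal $m$: the displayed $c$ must satisfy $\sum_{i}c_{i}=n$, and $\sum_{i}c_{i}$ will turn out to be the common snake length, which is $m$ by Lemma~\ref{Wsnake}. So I work in $S=S_{k,m}$; every cell of a tuple cylinder then carries a value in $\{1,\dots,m\}$ and hence lies in exactly one snake, and by the corollary just above, the $m\times k$ cylinder $T$ of a $W_{\nu}$-orbit $\cal O$ is tiled by exactly $k$ snakes.

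Next I would fix the indexing of those snakes: let $s_{1}$ be the snake through the cell $T(1,1)$, and $s_{2},\dots,s_{k}$ the snakes encountered, in order, as one runs down the first column; by the gap-freeness part of Lemma~\ref{pp} this cyclic order is unambiguous. Put $c:=\mathcal S(s_{1})$. The shift part of Lemma~\ref{pp} gives $\mathcal S(s_{j+1})=H(\mathcal S(s_{j}))$, hence $\mathcal S(s_{j})=H^{\,j-1}(c)$ for every $j$ (indices mod $k$). Since each snake has length $m$, and since $\mathcal M$ advances either within a column or to the next one while snakes begin in column~$1$ and finish in column~$k$ (so every column is visited at least once), $c$ is a composition of $m$ into $k$ strictly positive parts. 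Thus $T\mapsto c$ is well defined with the asserted image, and it remains to establish that it is a bijection --- which is what ``filling the first column imposes the other numbers'' is meant to capture.

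To do that I would factor $T\mapsto c$ through the first column of $T$ and invert each half. Recovering the first column from $c$: by Claim~1 in the proof of Lemma~\ref{pp}, the consecutive gaps between the value-$1$ entries as one descends column~$1$ are $c_{1},c_{2},\dots,c_{k}$ taken cyclically, and between two consecutive value-$1$ entries the column increases by~$1$ (the column-$1$ segment of a snake is the run $1,2,\dots$, by the definition of $\mathcal M$); so the first column is determined by $c$ up to a cyclic rotation of its rows, whence $T\mapsto c$ is injective once cylinders are taken up to row-rotation (equivalently, once we pass to orbits) and $c$ up to the shift $H$. Recovering $T$ from its first column: the first column fixes the starting rows of all $k$ snakes, their snake maps are the cyclic shifts $H^{\,j-1}(c)$, and a snake map plus a starting cell pins down, via the deterministic rule $\mathcal M$ and the fixed $\nu$, exactly which cells that snake occupies and which value it deposits in each; since the $k$ snakes tile $T$, this reconstructs the whole cylinder. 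Composing the two inverse constructions yields both an inverse to $T\mapsto c$ and surjectivity: from any composition $c$ of $m$ into positive parts one writes the first column by the gap recipe, grows the table by iterating $W_{\nu}$, and verifies that it closes after $m$ rows --- precisely the computation already performed in the proof of Theorem~\ref{winchthm}, Part~1.

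The part I expect to be genuinely delicate is not any one of these steps in isolation but, in the reconstruction of $T$ from its first column, making rigorous the claim that a snake is \emph{globally} rigid: it is generated cell by cell by the purely local rule $\mathcal M$, and one must show that specifying its starting cell together with its snake map leaves no residual choice along the way. Running alongside this is the need to keep careful track of the cyclic rotation/shift ambiguity, so that the final output really is a bijection between $W_{\nu}$-orbits and compositions of $m$ into $k$ positive parts modulo cyclic shift --- i.e. the binary necklaces of Theorem~\ref{winchthm}, Part~2 --- rather than a correspondence that collapses in one direction. With that rigidity in place, everything else is bookkeeping resting directly on Lemmas~\ref{Wsnake} and~\ref{pp}.
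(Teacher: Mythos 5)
Your argument is correct and is precisely the reasoning the paper compresses into the one-line justification embedded in the corollary (``filling the first column of the cylinder will impose the other numbers''); the paper supplies no separate proof beyond this and the preceding Lemma~\ref{pp} and proof of Theorem~\ref{winchthm}, Part~1. The ``delicacy'' you flag at the end is actually unproblematic: given the snake map and the fixed $\nu$, the rule $\cal M$ deterministically dictates, for each inter-column step, whether the snake moves right or diagonally down-right (according to the sign of $\nu(j)-\nu(j{+}1)$), so a snake is globally rigid once its starting cell is fixed, and the reconstruction is forced.
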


\begin {tabular}{l|l|l|l|l|l|l|l|}
  \cline{7-7}\cline{6-6}\cline{5-5} \cline{4-4}\cline{3-3}\cline{2-2}  \cline{8-8}
&&&&&\hspace{2cm}&\quad&\quad\\&&&&&&&\\&&&&&&&\\

  \cline{7-7}\cline{6-6}\cline{5-5} \cline{4-4}\cline{3-3}\cline{2-2} \cline{7-7}\cline{6-6}\cline{5-5} \cline{8-8}
&&&\dots&&&&\\ \cline{4-4}\cline{3-3}\cline{2-2} \cline{7-7}\cline{6-6}\cline{5-5} \cline{8-8}
row $p$ ($x^1$)&\color{red}$1$&&\dots&&&&\\ \cline{4-4}\cline{3-3}\cline{2-2} \cline{7-7}\cline{6-6}\cline{5-5} \cline{8-8}
row $p{+}1$ ($x^2$)& \color{red}$2$&&\dots&&&&\\
  \cline{7-7}\cline{6-6}\cline{5-5} \cline{4-4}\cline{3-3}\cline{2-2} \cline{8-8}
&&&&&\hspace{1cm}&\quad&\quad\\&&&&&&&\\&&&&&&&\\
 \cline{4-4}\cline{3-3}\cline{2-2} \cline{7-7}\cline{6-6}\cline{5-5} \cline{8-8}
row $p{+}c_1{-}1$ ($x^{c_1}$)&\color{red}$c_1$&\color{red}$c_1{+}1$&\dots&&&&\\ \cline{4-4}\cline{3-3}\cline{2-2} \cline{7-7}\cline{6-6}\cline{5-5} \cline{8-8}
row $q$ ($x^{c_1+1}$)&\color{blue}$1$&$\color{red}c_1{+}2$&\dots&&&&\\ \cline{4-4}\cline{3-3}\cline{2-2} \cline{7-7}\cline{6-6}\cline{5-5} \cline{8-8}
&&&&&\hspace{1cm}&\quad&\quad\\&&&&&&&\\&&&&&&&\\
 \cline{4-4}\cline{3-3}\cline{2-2} \cline{7-7}\cline{6-6}\cline{5-5} \cline{8-8}
row $q{+}c_2{-}1$ ($x^{c_1+c_2-1}$)&\color{blue}$c_2$&\color{blue}$c_2{+}1$&&&&&\\
 \cline{4-4}\cline{3-3}\cline{2-2} \cline{7-7}\cline{6-6}\cline{5-5} \cline{8-8}
row $q{+}c_2$ ($x^{c_1+c_2}$)&\color{green}$1$&\color{blue}$c_2{+}2$&\dots&&&&\\ \cline{4-4}\cline{3-3}\cline{2-2}  \cline{7-7}\cline{6-6}\cline{5-5} \cline{8-8}

&&&&&\hspace{1cm}&\quad&\quad\\&&&&&&&\\&&&&&&&\\
 \cline{4-4}\cline{3-3}\cline{2-2} \cline{7-7}\cline{6-6}\cline{5-5} \cline{8-8}

\end{tabular}
\vspace{0.5cm}

{\footnotesize
\emph{Figure. 4. Snakes in a tuple board of $W_{(1,2,\dots, a)}$.}
}

\vspace{0.5cm}

In order to prove Part 2 of Theorem \ref{winchthm}, we present the definition of rotation on  2-colored necklaces with $k$ white beads and $n-k$ black beads, then we proceed to the proof: 

\begin{defin}\label{Neck}

Let $N_{k,m}$ the set of all $k$-tuples $(x_1, x_2 , \dots , x_k)$  satisfying $1\leq x_1 <x_2 < \dots < x_k\leq m$ and $\sum_{i=1}^k x_i =m$.
The action of rotation on this set is defined as $R:N_{k,m}\rightarrow N_{k,m}$, $\forall x\in N_{k,m},~ R(x)=y$, where $y=(x_1{+}1,x_2{+}1,\dots ,x_k{+}1)$ if for all $i, x_i< m$. Or  $y=(1,x_1{+}1,\dots x_{k-1}{+}1)$ if $x_k=m$.

\end{defin}

%\textbf{Remark.} The action of rotation partitions $N_{k,m}$ to some orbits. All of this orbits have size $m$ except in the case where $m$ divides $k$ that there is one orbit of size $m/k$.

\begin{lemma}\label{map1}

There is a map ${\cal K}:S_{k,m}\rightarrow N_{k,m}$ satisfying $\forall x\in S_{k,m},~R({\cal K}(x))= {\cal K}(\bar{W}_{\nu}(x))$.

\end{lemma}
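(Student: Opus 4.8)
The plan is to exhibit ${\cal K}$ explicitly using the snake structure already established for $W_\nu$-orbits, and then check equivariance on each row of a tuple board. First I would fix $x\in S_{k,m}$ and form the tuple board $T=TB(x)$ for the map $W_\nu$, together with its orbit cylinder $TS({\cal O})$; by Lemma \ref{Wsnake} and the corollaries following the proof of Part 1, this cylinder is covered by exactly $k$ snakes $s_1,\dots,s_k$, each of length $m$, stacked consecutively, and by Lemma \ref{pp} their snake maps are the cyclic shifts $H^{i-1}(c)$ of a single composition $c=(c_1,\dots,c_k)$ of $m$ into $k$ positive parts. The natural definition is then ${\cal K}(x) = (c_1, c_1{+}c_2, \dots, c_1{+}\dots{+}c_k)\in N_{k,m}$, where $c={\cal S}(s)$ for the snake $s$ whose first cell $s_1$ lies in the row of $T$ equal to $x$ (i.e. row $1$), in column $1$; one must observe that $s_1$ is the cell in column $1$ of that row, since Lemma \ref{Wsnake} says every snake starts in column $1$, and the row containing $x$ has a well-defined entry there. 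This lands in $N_{k,m}$ because the partial sums are strictly increasing (the $c_i$ are positive) and the last one is $\sum c_i = m$.

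Next I would verify $R({\cal K}(x)) = {\cal K}(\bar W_\nu(x))$. The key point is that passing from $x$ to $\bar W_\nu(x)$ moves us up one row in the tuple board (since $\bar W_\nu$ is the inverse of $W_\nu$, the row above $x$ in $T$ is $\bar W_\nu(x)$), so the snake attached to the new base row is the snake immediately preceding $s$ in the cylinder, whose snake map is $H^{-1}(c) = (c_k, c_1, c_2, \dots, c_{k-1})$ — except one must track the case distinction in the definition of $R$, namely whether the snake through row $1$ actually begins in row $1$ or begins in some earlier row. Concretely: if the cell $T(1,1)$ is the \emph{start} of its snake (value $1$ there), then the snake for $\bar W_\nu(x)$ genuinely is the previous snake, with snake map $H^{-1}(c)$, and the partial sums of $H^{-1}(c)$ are exactly $(c_k, c_k{+}c_1, \dots, c_k{+}c_1{+}\dots{+}c_{k-1})$, which is $(x_k', 1{+}x_1', \dots)$ pattern matching the $x_k = m$ branch of $R$; if instead $T(1,1)$ has value $t>1$, the base snake is unchanged in passing to the previous row and all partial sums simply shift by $1$, matching the generic branch of $R$. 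I would make this precise by reading off the first column of the cylinder as the concatenation $1,2,\dots,c_1,1,2,\dots,c_2,\dots$ (as in the proof of Part 1) and noting that $\bar W_\nu$ decrements the row index, hence cyclically shifts which snake is "current" and by how much its starting value has advanced.

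The main obstacle I anticipate is bookkeeping the two branches of $R$ against the position of the base row inside its snake, and making sure the shift $H$ on snake maps translates to the correct shift $R$ on partial-sum vectors — in particular getting the wraparound right when $x_k = m$ corresponds to the base row being the \emph{first} row of its snake. A secondary subtlety is confirming that ${\cal K}$ is well-defined, i.e. independent of which representative $x$ of the orbit we started from in the sense required: here we only need ${\cal K}$ to be a genuine function $S_{k,m}\to N_{k,m}$, which it is since $x$ determines $T$ and hence the snake through its first row; the equivariance statement is exactly what ties different representatives together. I would not belabor the routine verification that the $c_i$ are positive and sum to $m$, since that is immediate from Lemma \ref{Wsnake} and the counting corollary. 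Once ${\cal K}$ and its equivariance are in hand, combined with the fact (to be used afterward, presumably) that $R$ on $N_{k,m}$ is the classical necklace rotation, Part 2 of Theorem \ref{winchthm} follows; but the lemma itself stops at producing ${\cal K}$, so the proof ends with the row-shift computation above.
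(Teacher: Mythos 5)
Your plan identifies the right structure (snakes, snake maps, stacked snakes with cyclically-shifted snake maps) and you correctly anticipate the two cases in $R$, but the definition of ${\cal K}$ you propose is not equivariant, and the description of which snake you use is internally inconsistent. You want "the snake $s$ whose first cell $s_1$ lies in row $1$, column $1$," i.e.\ a snake starting at $T(1,1)$ — but a snake starts at $T(1,1)$ only when $T(1,1)=x_1=1$. For general $x$ the snake through $T(1,1)$ starts $x_1-1$ rows earlier, so the object you reference does not exist. The paper instead takes $s$ to be the snake \emph{covering} $T(1,1)$ (so $s_{x_1}=T(1,1)$), which is always well-defined.

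More seriously, your formula ${\cal K}(x)=(c_1,\,c_1{+}c_2,\dots,\,c_1{+}\dots{+}c_k)$ depends only on the snake map $c$ and not on $x_1$. But along $c_1$ consecutive rows of the cylinder (those whose column-1 entry lies in the same snake), the snake covering $T(1,1)$ does not change, so your ${\cal K}$ is constant on those $c_1$ tuples; meanwhile $R$ has no fixed points on $N_{k,m}$ (it either adds $1$ everywhere or wraps). You even notice this tension — "all partial sums simply shift by $1$" in the generic branch — but with your ${\cal K}$ the partial sums do \emph{not} shift, because $c$ is unchanged. The paper's definition resolves this by setting $y_1=c_1-x_1+1$ (and then $y_i=y_{i-1}+c_i$): the $-x_1$ term makes every $y_i$ drop by exactly $1$ as $x_1$ increments, giving the generic branch of $R$, and when $x_1=c_1$ the snake map rotates to $H(c)$ and a short computation recovers the wrap-around branch. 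Without encoding $x_1$ into ${\cal K}$ the equivariance $R({\cal K}(x))={\cal K}(\bar W_\nu(x))$ cannot hold.
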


\begin{proof}
Consider arbitrary $x\in S_{k,m}$ and $T=TB(x)$ as in Definition \ref{tupleA}. Let $s$ be the snake covering $T(1,1)$.
For ${\cal S}(s)=(c_{1},c_{2},\dots ,c_{k})$,  we define
${\cal K}(x)=(y_1, y_2 \dots y_k)\in N_{k,m}$, where $ y_{1}=c_{1}{-}x_{1}{+}1$, and for $2\leq i\leq k$, $y_{i}=y_{i-1}+c_{i}$.

Note that  $R({\cal K}(x))= {\cal K}(\bar{W}(x))$ if and only if  $R({\cal K}(W(x)))= {\cal K}(x)$. Let $T=TB(x)$, $T'=TB(W(x))$. Let
$s$ be the snake in $T$  covering $T(1,1)$,  $c={\cal S}(s)$, and  similarly let $s'$ be the snake in $T'$ covering $T'(1,1)$,  $c'={\cal S}(s')$, and $W(x)=z$.
Either $c=c' $ and $ x_{1}+1=z_{1}$ or $c'=H(c)$,  $x_{1}=c_{1}$, and $z_1=1$.

\begin{equation}
R({\cal K} (z))=
R(y_1,y_2,\dots y_k) ;  y_{1}= c'_{1}-z_{1}+1, y_{i+1}=y_{i}+c'_{i+1}
\end{equation}
\begin{equation}=
\left\{
\begin{array}{ll}
R(y_1,y_2,\dots y_k) ;  y_{1}= c_{1}-(x_{1}+1)+1, y_{i+1}=y_{i}+c_{i+1}\\
R(y_1,y_2,\dots y_k) ;  y_{i}| y_{1}= c_{2}-1+1, y_{i+1}=y_{i}+c_{i+2}
\end{array}\right.
\end{equation}
\begin{equation}=
\left\{
\begin{array}{ll}
R(y_1,y_2,\dots y_k) ;  y_{1}= c_{1}-x_{1}, y_{i+1}=y_{i}+c_{i+1}\\
R(y_1,y_2,\dots y_k) ; y_{1}= c_{2}, y_{i+1}=y_{i}+c_{i+2(mod ~ k)}
\end{array}\right.
\end{equation}

\begin{equation}=
\left\{
\begin{array}{ll}
\{(y'_1,y'_2,\dots y'_k) ;  y_{1}= c_{1}{-}x_{1}{+}1, y_{i+1}=y_{i}{+}c_{i+1}\}) \quad \quad \quad \mbox {because } \forall i, y_{i}<m\\
R( \{y_{i}| y_{i}= c_{i+1}(1\leq i\leq k-1), y_{k}=m\}) =(1, 1{+}c_{2}, 1{+}c_{2}{+}c_{3}\dots 1+\sum_{i=1}^{k}c_{k})
\end{array}\right.
\end{equation}
\begin{equation}
={\cal K}(x)
\end{equation}

\end{proof}

\exm{Consider $x=(2,3,4,6)\in S_{4,7}$, $W(x)=(1,2,5,6)$. In Figure 5, $TB(x)$ and $TB(W(x))$ are depicted. We see that ${\cal K}(W(x))=(1,4,5,7)$, and ${\cal K}(x)=(1,2,5,6)$. Note that $R(1,4,5,7)=(1,2,5,6)$. 

 }

\vspace{0.5cm}
$T=TB(2,3,4,6):$
\begin {tabular}{l|l|l|l|l|l|l|l|}
 \cline{4-4}\cline{3-3}\cline{2-2}  \cline{5-5}
&2&3&4&6\\
\cline{5-5} \cline{4-4}\cline{3-3}\cline{2-2}
&1&2&5&7\\
\cline{5-5} \cline{4-4}\cline{3-3}\cline{2-2}
&1&3&6&7\\
\cline{5-5} \cline{4-4}\cline{3-3}\cline{2-2}
&2&4&5&6\\
\cline{5-5} \cline{4-4}\cline{3-3}\cline{2-2}
&3&4&5&7\\
\cline{5-5} \cline{4-4}\cline{3-3}\cline{2-2}
&1&2&6&7\\
\cline{5-5} \cline{4-4}\cline{3-3}\cline{2-2}
&1&3&4&5\\
\cline{5-5} \cline{4-4}\cline{3-3}\cline{2-2}
\end{tabular}
$\hspace{3cm} T'=TB(1,2,5,7):$
\begin {tabular}{l|l|l|l|l|l|l|l|}
\cline{5-5} \cline{4-4}\cline{3-3}\cline{2-2}
&1&2&5&7\\
\cline{5-5} \cline{4-4}\cline{3-3}\cline{2-2}
&1&3&6&7\\
\cline{5-5} \cline{4-4}\cline{3-3}\cline{2-2}
&2&4&5&6\\
\cline{5-5} \cline{4-4}\cline{3-3}\cline{2-2}
&3&4&5&7\\
\cline{5-5} \cline{4-4}\cline{3-3}\cline{2-2}
&1&2&6&7\\
\cline{5-5} \cline{4-4}\cline{3-3}\cline{2-2}
&1&3&4&5\\
\cline{5-5} \cline{4-4}\cline{3-3}\cline{2-2}
&2&3&4&6\\
 \cline{4-4}\cline{3-3}\cline{2-2}  \cline{5-5}
\end{tabular}

\vspace{0.5cm}

{\footnotesize
\emph{Figure. 5. The snake covering $T(1,1)$ has snake map (2,1,3,1). Hence, {\cal K}(x)=(1,2,5,6). The snake covering $T'(1,1)$ has snake map (1,3,1,2). Hence, {\cal K}(W(x))=(1,4,5,7).}
}

\vspace{0.5cm}

\textbf{Proof of Theorem  \ref{winchthm}, Part 2.}
Having Lemma \ref{map1}, we conclude that the orbit structures of $\langle N_{k,m}, R\rangle$ and $\langle S_{k,m},W \rangle $  are the same.

\begin{lemma}\label{map}
{
Let $T$ be an $m \times k$ tuple board.  Consider the column $r$: $T_{r}=\{T(i,r)\}$. For any $ 1\leq r \leq k $, there exists a one-to-one function ${\cal F}: T_{r}\rightarrow T_{k{+}1{-}r}$, satisfying ${\cal F}(x)= m{+}1{-}x$.
}
\end{lemma}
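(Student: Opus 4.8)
The plan is to read off, from the snake decomposition of $T$, the exact multiset of values occurring in each column, and then recognize $v\mapsto m{+}1{-}v$ as the symmetry exchanging column $r$ with column $k{+}1{-}r$. Recall from the structural results above that the $m\times k$ cylinder $T$ is the disjoint union of exactly $k$ snakes $s_1,\dots,s_k$; each $s_j$ has length $m$ (so it carries each value $1,2,\dots,m$ exactly once), runs from column $1$ to column $k$, and its snake map ${\cal S}(s_j)$ is one of the $k$ cyclic shifts of a single sequence $c=(c_1,\dots,c_k)$ with $c_i\ge 1$ and $\sum_i c_i=m$. Index the snakes so that ${\cal S}(s_j)=H^{j-1}(c)=(c_j,c_{j+1},\dots,c_{j+k-1})$, with subscripts read cyclically modulo $k$, and set $\Sigma^{(j)}_t:=c_j+c_{j+1}+\dots+c_{j+t-1}$, so that $\Sigma^{(j)}_0=0$ and $\Sigma^{(j)}_k=m$.

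First I would pin down the shape of a single snake inside a single column. By Definition~\ref{snak} the move ${\cal M}$ either stays in the current column, going down one row and increasing the value by one, or it passes to the next column (in the same row or one row below), again increasing the value by one. Hence the trace of $s_j$ in column $t$ is a block of $c_{j+t-1}$ vertically consecutive cells carrying the consecutive values $\Sigma^{(j)}_{t-1}{+}1,\Sigma^{(j)}_{t-1}{+}2,\dots,\Sigma^{(j)}_{t}$. Since (by the results above) the $k$ snakes partition the $m$ cells of column $t$, it follows that for each $v\in\{1,\dots,m\}$ the number of cells of column $t$ equal to $v$ is
\[
\mu_t(v)\;=\;\#\{\,j:\ \Sigma^{(j)}_{t-1}<v\le\Sigma^{(j)}_{t}\,\}\;=\;N_{t-1}(v)-N_t(v),\qquad N_t(v):=\#\{\,j:\ \Sigma^{(j)}_{t}<v\,\},
\]
the second equality using $\Sigma^{(j)}_{t-1}\le\Sigma^{(j)}_{t}$. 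Thus $T_t$, as a multiset, is completely encoded by $N_0,\dots,N_k$, and since every cell carries a value in $\{1,\dots,m\}$ there are no boundary cases to worry about.

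The heart of the matter is the identity $N_t(m{+}1{-}v)=k-N_{k-t}(v)$. It follows from the observation that the complement of a length-$t$ cyclic window of $c$ is a length-$(k{-}t)$ cyclic window: since $c_j+c_{j+1}+\dots+c_{j+k-1}=m$ for every $j$, we have $m-\Sigma^{(j)}_t=\Sigma^{(j+t)}_{k-t}$. Hence $\Sigma^{(j)}_t<m{+}1{-}v\Leftrightarrow\Sigma^{(j)}_t\le m-v\Leftrightarrow\Sigma^{(j+t)}_{k-t}\ge v$, and, because $j\mapsto j{+}t$ permutes $\Z/k$, $N_t(m{+}1{-}v)=\#\{\,j:\ \Sigma^{(j)}_{k-t}\ge v\,\}=k-N_{k-t}(v)$. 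Applying this twice, the number of cells of column $k{+}1{-}r$ equal to $m{+}1{-}v$ is
\[
\mu_{k+1-r}(m{+}1{-}v)\;=\;N_{k-r}(m{+}1{-}v)-N_{k+1-r}(m{+}1{-}v)\;=\;\bigl(k-N_r(v)\bigr)-\bigl(k-N_{r-1}(v)\bigr)\;=\;N_{r-1}(v)-N_r(v)\;=\;\mu_r(v).
\]
So $v\mapsto m{+}1{-}v$ carries the value multiset of column $r$ onto that of column $k{+}1{-}r$ preserving multiplicities; in particular it restricts to a one-to-one function ${\cal F}:T_r\to T_{k+1-r}$ with ${\cal F}(x)=m{+}1{-}x$, proving the lemma.

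The only place I expect to need genuine care is the first step: checking from Definition~\ref{snak} and Lemma~\ref{Wsnake} that a snake's trace in a column really is a contiguous block of consecutively-valued cells beginning at $\Sigma^{(j)}_{t-1}{+}1$, and quoting the structural corollaries above so that the $k$ snakes are known to tile all $mk$ cells and to have snake maps exactly the cyclic shifts of one $c$. After that the argument is just the short computation above, whose one subtlety is keeping the cyclic indices of $c$ consistent.
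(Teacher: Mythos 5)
Your proof is correct, and it reaches the same conclusion by a somewhat different route. The paper's proof constructs an explicit cell-to-cell correspondence: given a cell $x$ that is the $l$-th cell of its snake (snake map $p=(c_1,\dots,c_k)$) within column $r$, it matches $x$ with the $(\sum_{i=1}^{r-1}c_i+l)$-th cell from the end of the snake whose snake map is $H^r(p)=(c_{r+1},\dots,c_k,c_1,\dots,c_r)$; that cell holds $m+1-x$ and lies in one of columns $k-r+1,\dots,k$, producing a bijection $\cup_{t=1}^r T_t\to\cup_{t=k-r+1}^k T_t$, from which the single-column statement is extracted by subtracting the analogous map for $r-1$. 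You instead aggregate over all snakes: you compute the multiplicity function $\mu_t(v)$ of each value $v$ in column $t$ via $N_{t-1}(v)-N_t(v)$ and then prove $N_t(m+1-v)=k-N_{k-t}(v)$ from the complementary-window identity $m-\Sigma^{(j)}_t=\Sigma^{(j+t)}_{k-t}$. Both arguments hinge on the same underlying facts established earlier in the paper (the cylinder is tiled by $k$ snakes whose snake maps are the cyclic shifts $H^i(c)$, and each snake's trace in a column is a contiguous block), and both ultimately turn on rotating the snake map by $t$ places. What your enumerative version buys is a cleaner finish: you never need the paper's final set-difference step to pass from $\cup_{t\le r}T_t$ to $T_r$ alone, and the multiplicity identity makes it immediate that $T_r$ and $T_{k+1-r}$ match as multisets under $v\mapsto m+1-v$, which is exactly the content of the lemma. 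What the paper's version buys is an explicit, canonical bijection rather than a mere existence claim. Your flagged delicate point, that a snake's trace in a column is contiguous and consecutively valued starting at $\Sigma^{(j)}_{t-1}+1$, is indeed the only thing one needs to read off carefully from Definition~\ref{snak} and Lemma~\ref{Wsnake}; it holds.
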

\begin{proof}
For any $r$, we construct a mapping from $\{\cup_{t=1}^{r}T_{t}\}$ to $\{ \cup_{t=k{-}r{+}1}^{k}T_{t} \}$. Consider a number $x$ in $T_{r}$.
Let it be the $l$th element in $T_{r}$, covered by a snake having snake map $p=(c_{1}, c_{2}, \dots,c_{r},\dots , c_{k}).$
Consider the snake with snake map $p'=(c_{r{+}1},\dots ,c_{1}, c_{2}, \dots,c_{r} )$. Let $y$ be the $\sum_{i=1}^{r{-}1}c_{i}{+}l$th element
 from the end in this snake. Then $y=m{+}1-\sum_{i=1}^{r{-}1}c_{i}{+}l= m{+}1{-}x$. Since $\sum_{i=1}^{r{-}1}c_{i}{+}l\leq \sum_{i=1}^{r}c_{i}$,
$y$ will be lying in one of the columns $k,\dots k{-}r{+}1$. 

Having the above mapping, we know there is also a one-to-one mapping in $\{\cup_{t=1}^{r}T_{t}\}\rightarrow\{ \cup_{t=k{-}r{+}1}^{k}T_{t} \}$
and also in $\{\cup_{t=1}^{r-1}T_{t}\}\rightarrow\{ \cup_{t=k{-}r}^{k}T_{t} \}$. Hence, there exists ${\cal F}: T_{r}\rightarrow T_{k{+}1{-}r}$
satisfying the lemma's conditions.

\end{proof}

\textbf{Proof of Theorem \ref{winchthm}, part 3.}
Considering any $m\times k$ tuple cylinder $TS({\cal O})$, Lemma \ref{pp} shows that $TS({\cal O})$ is totally covered by $k$ snakes.
Therefore, each element $1\leq i\leq m$ appears $k$ times in the cylinder and therefore the average of $f_i$ as defined in Theorem \ref{winchthm} part 3
is independent of $\cal O$ and equal to $k/m$.

Lemma \ref{map} shows that the number of $j$s in any column $i$ is equal to the number of $m{-}j{+}1$'s in column $k-i+1$ of $TS({\cal O})$.
Thus, $\sum_{x\in{\cal O}} g_{i,j}(x)=  \sum_{x\in{\cal O}}g_{k-i+1,m-j+1}(x)$.
In other words, $\forall 1\leq i\leq k, 1\leq j\leq m, g_{i,j}-g_{k-i+1,m-j+1}$ is $0$-mesic in $W$-orbits of $S_{k,m}$.
$\Box$

\subsection{Proof of Theorem \ref{winchlowthm}}

In this subsection we will prove Theorem \ref{winchlowthm}. Remember the definitions of tuple board, snake, snake map and the correspondence to the action of winching with lower bounds.

\begin{defin}
For the set of lower bounds $l=(l_1,\dots l_k)$ and $i\in k$, let $\underline{\bar{W}}_i$ be defined as: $\underline{\bar{W}}_i: S_{k,m}\rightarrow S_{k,m}; ~\forall x\in S_{k,m}  ~ \underline{\bar{W}}_i(x)=\max \{l_i,  \bar{W}_i (x)\}$,  where $\bar{W}_i $ is  defined as in Definition \ref{inverse}.

Note that $\forall x\in S_{k,m}, \bar{W}_{i}\circ W_{i}(x)=x$. We call $\bar{W}_i$ \textbf{inverse winching} at index $i$.  
\end{defin}

\begin{defin}
For arbitrary $k$ and $\nu$ a permutation of $[k]$, $\bar{W}_{\nu}:S_{k,m}\rightarrow S_{k,m}$ is defined by $\bar{W}_{\nu}=\bar{W}_{\nu(1)}\circ \bar{W}_{\nu(2)}\circ\dots\circ W^r_{\nu(k)}$ and we have 
$\forall x\in S_{k,m}, \bar{W}_{\nu}(W_{\nu}(x))=x$. 
\end{defin}

In contrast to what we showed in Lemma \ref{Wsnake} for $W_{\nu}$-snakes , $\underline{W}_{\nu}$-snakes do not necessarily cover all the numbers $1,2,\dots , m$. As depicted in Figure 3, they only contain $l_i,\dots m$ where $l_i$ is one of the lower bounds. The following lemma states this formally: 

\begin{lemma}

Consider a  tuple cylinder $TS({\cal O)}$ which is constructed by applying the action of $\underline{W}_{\nu}$ to an arbitrary $x\in S_{k,m}$. Let the lower bounds for this action be $l=(l_1,l_2,\dots ,l_k)$.
Any snake in this tuple board starts in some column $q$ and 
with  $s_{l_q}$, and ends in the last column of the tuple cylinder with $s_m$.
\end{lemma}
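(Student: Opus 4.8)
The plan is to adapt the proof of Lemma~\ref{Wsnake} to the lower-bound setting. Recall that the key mechanism there was: the forward map $W_{\nu}$ lets us extend a snake downward unless its tail already holds the value $m$, and the backward map $\bar{W}_{\nu}$ lets us extend a snake upward unless its head already holds the value $1$. For $\underline{W}_{\nu}$ the forward direction is unchanged --- $\underline{W}_i$ agrees with $W_i$ whenever the result of $W_i$ is at least $l_i$, and it can only fail to increase a coordinate past $m$ --- so the argument that every snake terminates at a cell with value $m$ in the last column goes through verbatim. The new phenomenon is at the top: when $\underline{W}_i$ clips a winched value up to $l_i$, the corresponding cell is a ``source'' of a snake, because the cell above it in that column need not carry the value $l_i - 1$.

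First I would make precise what ``starts with $s_{l_q}$ in column $q$'' means: a snake $s=(s_f,\dots,s_t)$ has $t=m$ (by the forward argument), and I claim $f=l_q$ where $q$ is the column of $s_f$, the head cell. So the content is entirely about the head. I would fix $x\in S'_{k,m}$, form the tuple board $T=TB(x)$ for $\underline{W}_{\nu}$, take a snake $s$, and let $(i,q)$ be the position of its head cell $s_f$, so $T(i,q)=f$. I then run the upward-extension argument of Lemma~\ref{Wsnake}: using $\underline{\bar{W}}_{\nu}$, one shows that whenever $f>l_q$ there is an adjacent cell in row $i-1$ (either directly above, in column $q$, or above-and-left in column $q-1$, depending on the relative order of $\nu$ on the two columns) carrying the value $f-1$, contradicting maximality of $s$ as a snake. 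The only way this extension can fail is if the transition from row $i-1$ to row $i$ at column $q$ was a clipping step of $\underline{W}_q$, which forces the new value to equal exactly $l_q$; hence $f=l_q$.

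Concretely I would phrase the core claim as: for a cell $(i,q)$ with $1<$ its value $=v$, if $(i,q)$ is not reachable by $\cal M$ from any cell of the previous row, then $\underline{W}_q$ was applied as a clip at the step producing row $i$ (in the winching order this is the step for index $q$), which by the definition $\underline{W}_q(w)=\max\{W_q(w),l_q\}$ means $v=l_q$. This uses the fact (inherited from Lemma~\ref{LemmaAlpha}/Lemma~\ref{beta} and Definition~\ref{snak}) that the snake's $\cal M$-transitions track exactly the cells whose value was set by an increment-by-one, while clipping to $l_q$ is the unique other way a coordinate can take a small value. Combining: every snake's head sits at value $l_q$ in its column $q$, and its tail sits at value $m$ in the last column; so $s=(s_{l_q},s_{l_q+1},\dots,s_m)$.

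The main obstacle I anticipate is bookkeeping the interaction between the winching order $\nu$ and the clipping, specifically ruling out the case where the head of a snake sits in a column $q$ but its value is some $l_{q'}$ with $q'\neq q$, or where the clip ``lands'' a value that is not one of the lower bounds. Here one uses that in $S'_{k,m}$ the constraint is $x_i\ge l_i$ coordinatewise and the $l_i$ are strictly increasing, so $\underline{W}_q$ clips the $q$-th coordinate to $l_q$ and nothing else; the head being in column $q$ then pins the value to $l_q$. I would also need to double-check the boundary/wraparound behavior on the tuple cylinder (row indices mod $n$) exactly as in the proof of Theorem~\ref{winchthm} Part~1 --- appending copies of $T$ until $n\ge m$ so that no snake ``wraps'' --- but this is routine once the head/tail analysis is in place.
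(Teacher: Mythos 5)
Your proof is correct and follows essentially the same approach as the paper: the paper's own proof is a terse two-sentence sketch stating that the tail extends until it hits $m$ and the head extends unless $l_i<f$, which (combined with the automatic bound $f\geq l_q$ coming from $x_q\geq l_q$ in $S'_{k,m}$) pins the head at $l_q$ in column $q$. Your proposal makes the same argument but spells out the mechanism explicitly — forward extension is unaffected by clipping, and backward extension can only be blocked by a clipping step $\underline{W}_q$, which by definition forces the head value to equal $l_q$ exactly — so it is a more detailed version of the paper's reasoning rather than a different route.
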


\begin{proof}
Consider  $x\in {\cal O}$ and  snake $s$ in  the tuple board $T=TB(x)=[x^{1},\dots , x^n]$. We assume that $s=(s_{f}\dots s_{t})$.  Following the definitions of winching and snakes, we can see that unless $t=m$ we can append more cells to the tail of the snake, and if $l_i<f$ we can append more cells to the head of the snake. 
%Let $s_{t}=T(i,j)$, if  $T(i,j{+}1)= t+1$ then, $s_{t{+}1}=T(i,j{+}1)$ can be appended to the snake which is a contradiction. Thus, $T(i,j{+}1)\neq t+1$. We know $x^{i}=W^{i-1}_{\nu}(x)$, and if $\nu(j)< \nu(j+1)$, then since $x^{i+1}=W_{\nu}(x^{i})$, we have $x^{i+1}_{j}=T(i,j)+1$. If $\nu(j)>\nu(j+1)$, then since $x^{i+1}=W_{\nu}(x^{i+1})$, we have $x^{i+1}_{j}=T(i,j)+1$.  Thus in both cases, there exists $s_{t+1}={\cal M}(s_t)$ and this is a contradiction. Hence, the initial assumption is invalidated, and $t=m$. Note that only if $l=k$  we have $T(l,j)=m$.

\end{proof}

\noindent
\textbf{Proof of Theorem  \ref{winchlowthm}.}

Any tuple cylinder corresponding to action of $\underline{W}_{\nu}$ can be partitioned to snakes that start with some $s_{l_q}$ and end in $s_m$. Therefore, if $f$ is a function that have the same average on all the numbers contained in any snake, it will have the same average over all the elements in the tuple cylinders. Therefore, we will have the result. 

$\hspace{5.9 in}\Box$

\subsection{Proof of Theorem \ref{zerowinchthm}}

In this section, we will prove Theorem \ref{zerowinchthm}. Remember the definitions of tuple board, tuple cylinder, snake, snake map. 
Consider $x\in S_n$ and the action of $\mbox{\emph{WZ}}_{\nu}$ for some arbitrary permutation $\nu$ of $[n]$.
Note that in the case of winching with zeros, since we might have a bunch of zeros in our tuple board the snake does not necessarily start in column 1. However, it is a consecutive collection of numbers $1,2,\dots n$ as it was the case in Lemma \ref{Wsnake}. 

\begin{defin}
Let $M_{n}$ be the set of all sequences $(c_{1},\dots,c_n)$ that have $k$ preceding $0$s for some $0\leq k\leq n-1$,  $c_{k+1}\dots c_{n} >0$, and $\sum_{i=k+1}^{n} c_{i}=n$. Let $M_{n}$ be the set of all possible snake maps. 
We define the action \textbf{crawl} $C:M_{n}\rightarrow M_{n}$ such that for any $c\in M_{n}$, $C(c)=c'$ where,

For $1\leq i \leq n{-}1$,

$$c'_{i}=  \left\{ \begin{array}{ll}

\max\{0, c_{i{+}1}{-}1\}& \mbox{If  } c_{1}, \dots ,c_{i} \leq 1; \\
  c_{i{+}1} & \mbox{otherwise.} \\
\end{array} \right.
$$

And, $c'_{n}=n-\sum_{i=1}^{n-1} c'_{i}$.

\end{defin}
\begin{lemma} \label{WZ}

Consider some arbitrary $\nu$ a permutation of $[n]$. Let $T_{m\times n}$ be a tuple board corresponding to \emph{WZ}$_{\nu}$,  $i$  a row in $T$ containing a  $1$, and snake $s$  starting at row $i$. Let $j$ be the smallest number greater than $i$ containing another $1$, and  $s'$ the snake  starting at row $j$.

\begin{enumerate}

\item We have $j=i+2$.
\item Let $c=(c_{1},\dots , c_{n})$ be snake map of $s$ and $c'=(c'_{1},\dots , c'_{n})$ be snake map of $s'$. We have $c'=C(c)$

\item Any element of the tuple board is either a $0$ or it belongs to a snake. 
\end{enumerate}

\end{lemma}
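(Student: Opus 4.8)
The plan is to mimic the structure of the proof of Lemma \ref{pp} (the $W_\nu$-case), adapting each step to account for the extra ``zero'' behaviour of $\mbox{\emph{WZ}}_\nu$. As in that proof, I would first fix $\nu=(1,2,\dots,n)$ and prove everything for that choice, remarking that the general case is identical after relabelling; this keeps the notation manageable since then each step of $\mbox{\emph{WZ}}$ moves each coordinate either by $+1$, or to (previous coordinate $+1$), or to $0$, following the case split in the definition. Throughout, write $T=TB(x)=[x^1,x^2,\dots]$ and recall $x^{t+1}=\mbox{\emph{WZ}}_\nu(x^t)$.

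First I would prove part (1), $j=i+2$. Suppose $T(i,1)=x^i_1=1$; I want to locate the next row whose first coordinate is $1$. The key local observation is the analogue of Claim 1 in Lemma \ref{pp}: following the snake $s$ starting at $T(i,1)$, its first column contribution is $c_1$ cells, i.e.\ $x^i_1=1,x^{i+1}_1=2,\dots$ up to the row where the snake leaves column $1$; at the very next row the first coordinate is reset. In the $\mbox{\emph{WZ}}$ setting, however, when a coordinate ``wraps'' it may go to $0$ rather than to $1$. I would argue that, after the snake leaves column $1$ in row $i+c_1-1$ (where $x^{i+c_1-1}_1=c_1$ and $x^{i+c_1-1}_2=c_1+1$), applying $\mbox{\emph{WZ}}_1$ gives $x^{i+c_1}_1$. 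Here the crucial point is that the snake starting at $T(i,1)$ is the \emph{first} snake, so there is a $0$ sitting to its left in column $1$ at row $i-1$ (there is no earlier $1$, hence a $0$), which forces the reset value to be $0$ in row $i+c_1$ when $c_1$ was $1$... I need to be careful: part (1) concerns \emph{any} row containing a $1$, not just the first snake. So the argument must be: in column $1$, after a $1$ appears, the entries are $1,2,\dots$ along a snake; then a $0$ appears (because the previous column-1 entry was $>1$, triggering the ``$x_{i-1}{+}1$''... no — triggering the third ``otherwise'' branch only if $x_{i-1}$ is not positive). Let me instead phrase part (1) as: the entries of column $1$ cycle through blocks $0,1,2,\dots,c$ (a $0$, then an ascending run), and after a maximal ascending run ending at some value $\ge 1$ the next entry is $0$, and after a $0$ the next entry is $1$. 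Hence between two consecutive $1$'s in column $1$ there is exactly one intervening row, namely the one holding a $0$, giving $j=i+2$. Establishing this cycling structure of column $1$ from the definition of $\mbox{\emph{WZ}}_1$ is the first concrete task; it is essentially bookkeeping once one notes $x^t_1\in\{0\}\cup\{x^{t-1}_1+1\}\cup\{1\}$ depending on the three cases.

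Next, part (2): $c'=C(c)$. I would run the same induction as in Claims 2 and 3 of Lemma \ref{pp}, showing column-by-column that $c'_i$ equals either $c_{i+1}$ or $\max\{0,c_{i+1}-1\}$, with the decrement-by-one happening precisely when all of $c_1,\dots,c_i$ are $\le 1$ — i.e.\ exactly when the leading part of the snake $s'$ has been ``squeezed'' into producing a $0$ rather than a genuine cell. Concretely: the snake $s'$ starts two rows below $s$; for as long as $c_1,\dots,c_i\le 1$, the head of $s'$ in columns $1,\dots,i$ collapses (each such column contributes a $0$ to $c'$, or a shorter run), which is exactly the $\max\{0,c_{i+1}-1\}$ rule; once some $c_\ell\ge 2$ appears, the snake $s'$ is no longer squeezed and thereafter $c'_i=c_{i+1}$ verbatim, just as in Claim 3. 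Finally $c'_n = n-\sum_{i<n}c'_i$ because every snake has total length $n$ (it runs through $1,2,\dots,n$). Part (3) — every cell is a $0$ or lies on a snake — then follows from the same counting argument as the corollary after Lemma \ref{pp}: the $n$ snakes together with the zero-cells they force exactly tile the cylinder, since filling column $1$ with its cyclic $0$-then-ascending-run pattern determines the whole board.

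The main obstacle I anticipate is part (2), specifically pinning down the precise trigger ``$c_1,\dots,c_i\le 1$'' for the decrement in the crawl map. In the plain winching case (Lemma \ref{pp}) the two consecutive snakes are perfectly synchronised and $c^q=H(c^p)$ with no loss; here the possibility of a coordinate dropping to $0$ introduces a one-row ``slippage'' that must be tracked carefully through the columns, and the inductive hypothesis has to carry enough information (the exact positions of the heads of $s$ and $s'$, and whether a $0$ has been emitted yet) to push through. I expect the cleanest route is to track, for each column $i$, the row index at which the snake $s'$ first enters column $i$ relative to where $s$ enters it, and show this offset is $2$ until the first column with $c_i\ge 2$ and $1$ afterwards — with the emitted $0$'s absorbing the difference. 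Everything else is routine case-chasing from the definition of $\mbox{\emph{WZ}}$, parallel to the already-established $W_\nu$ arguments.
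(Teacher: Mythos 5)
Your overall plan — fix $\nu=(1,\dots,n)$, mimic the Lemma~\ref{pp} argument column by column, and treat Part~3 as a consequence — matches the paper's structure, and your description of Part~2 (decrement while $c_1,\dots,c_i\le 1$, then exact copy afterwards) correctly paraphrases the crawl map. However, your argument for Part~1 has a genuine gap: you restrict attention to column~1, assuming $T(i,1)=1$ and claiming that column~1 cycles through blocks ``$0$, then an ascending run,'' with ``after a $0$ the next entry is $1$.'' Both the assumption and the claim are false. The $1$ in a row $x=(0,\dots,0,x_{k+1},\dots,x_n)$ sits at the column $k{+}1$ where the zeros end, which can be any column, and column~1 can contain arbitrarily long runs of zeros: for instance with $n=3$, starting from $(1,2,3)$ one gets the orbit $(1,2,3)\to(0,0,0)\to(0,0,1)\to(0,0,2)\to(0,1,3)\to(0,2,3)\to(1,2,3)$, so column~1 reads $1,0,0,0,0,0$. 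Here the rows containing a $1$ are rows $1,3,5$ (so $j=i+2$ does hold), but the $1$s appear in columns $1,3,2$ respectively; a column-$1$-only argument cannot see this.

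The paper's Part~1 proof instead tracks the whole row. After noting $c_1\le 1$ (by the counting argument $\sum c_i=n$), it locates the first column $l$ at which the snake $s$ turns down; then row $i$ reads $0,\dots,0,1,2,\dots,T(i,l),\dots$ with $c_1,\dots,c_{l-1}\le 1$. Applying $\mbox{\emph{WZ}}$ once pushes all entries in columns $1,\dots,l{-}1$ to $0$ and leaves column $l$ with a value $>1$ and the rest increasing, so row $i{+}1$ contains no $1$; applying $\mbox{\emph{WZ}}$ again produces a $1$ at $T(i{+}2,l{-}1)$. This pinpoints where $s'$ starts, which the paper's Part~2 argument then uses to explain both the $\max\{0,c_{l}-1\}$ entry at column $l-1$ (there is exactly one $0$ wedged between $s$ and $s'$ there) and the verbatim $c'_k=c_{k+1}$ for $k\ge l$. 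Your Part~2 sketch is compatible with this, but it cannot be carried out until Part~1 is redone to identify where $s'$ actually begins, which is generally not column $1$.
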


\noindent
\begin{proof}

For simplicity we assume $\nu=(1,2,\dots ,n)$, and drop subscript from $W$. The proof any arbitrary permutation $\nu$ of ${[n]}$ will be similar. 

\textbf{Proof of Part 1. }
First, we argue that $c_{1}\leq 1$. We know that $\sum_{i=1}^{n}c_{i}=n$. If $c_{1}\neq 0$ then we have
$$c_{2},\dots , c_{n}>0\Rightarrow c_{2}+\dots+c_{n}\geq n{-}1 \Rightarrow c_{1}\leq n-(n{-}1)=1.$$  
Since there is a $1$ in row $i$ if $T(i,1)=0$, $T(i,2)=0$ or $T(i,2)=1$. In both cases, applying winching will derive, $T(i{+}1,1)=0$.
If $T(i,1)=1$ since $c_{1}\leq 1$ we have $T(i,2)=2$, and $T(i{+}1,1)=0$. Thus,  in both cases $T(i{+}1,1)=0$.
Consider  the first column where  $s$ turns down and let it be column $j$.  We have, $c_{1},\dots ,c_{l{-}1}\leq 1$, and $T(i,l)+1=T(i{+}1,l)$. 
Note that  for any $1\leq k \leq l,\quad T(i,k)=0$ or $T(i,k)=T(i,k{-}1)+1$. Hence, applying \emph{WZ} to $y=T(i,.)$ will result in $T(i{+}1,k)=0$ for $1\leq k\leq l{-}1$, and $T(i{+}1,l)>1$. Since the rest of entries in row $i{+}1$ will be in increasing order, there will not be any $1$ in this row.
In the $i+2$nd row, we will have $T(i{+}2,k)=0$ for $1\leq k \leq l{-}2$, and $T(i{+}2,l{-}1)=1$. As a result, $j=i+2.$

\textbf{ Proof of Part 2.}
As we argued in part 1, $s'$ starts at $T(i{+}2,l{-}1)$. Since elements of $s$ and $s'$ will be nonzero from this point to the right then by the same argument  presented  for regular winching  in the proof of Lemma \ref{pp}) we can show the snakes will move in parallel to each other. Note that in row $i{+}1$ we have zeros until we get to $T(i{+}1,l)$. In row $i{+}2$ we have zeros in $T(i{+}2,1),\dots ,T(i{+}2,l{-}2)$, i.e. $c'_{1}=\dots=c'_{l-2}=0$. At column $l{-}1$, $s'$ will start and move parallel to $s$ but there is a zero between $s$ and $s'$ in column $l{-}1$. Therefore, $c'_{l-1}=c_{l}-1$, and there is no zero between the remaining part of $s$ and $s'$, thus $c'_{k}=c_{k{+}1}$, for $l\leq k\leq n-1$. The rest of $s'$ will continue in column $n$. 

\textbf{ Proof of Part 3.}
 It is clear from the above argument that the space between the initial segments of any two snakes is filled with zeros.
\end{proof}

\begin{lemma}\label{crawl}\

For any  $c\in M_{n}$ we have, $C^{n}(c)=c$.

\end{lemma}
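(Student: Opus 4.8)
The plan is to read the crawl map $C$ off the $\mbox{WZ}_{\nu}$-tuple cylinders produced by Lemma~\ref{WZ}. That lemma tells us that, for any $\nu$ and any $x\in S_n$, the tuple cylinder of $x$ is covered by snakes --- each of length $n$, using every value $1,\dots,n$ exactly once and ending in the last column, by the $\mbox{WZ}_{\nu}$-analogue of Lemma~\ref{Wsnake} --- together with $0$'s; that two consecutive snakes begin exactly two rows apart; and that the snake map of each snake equals $C$ applied to the snake map of the snake immediately above it. Consequently, if the cylinder has $R$ rows it carries exactly $R/2$ snakes, their snake maps are $c,C(c),\dots,C^{R/2-1}(c)$ for some $c$, and $C^{R/2}(c)=c$. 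Since $\gamma$ of Lemma~\ref{gamma} is a bijection and $M_n$ is precisely the set of snake maps that can occur, every element of $M_n$ arises this way, so Lemma~\ref{crawl} follows once we know that the length $R$ of every $\mbox{WZ}_{\nu}$-orbit of $S_n$ divides $2n$ (this is part~1 of Theorem~\ref{zerowinchthm}, which I would establish at this point).

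To prove $R\mid 2n$ I would run the column--counting argument of the proof of part~1 of Theorem~\ref{winchthm}, now accounting explicitly for the zeros. First, every snake map $c=(c_1,\dots,c_n)$ occurring in a tuple board has $c_1\le 1$: if $c_1\ge 2$ then, since the snake also meets the columns to its right with positive counts, $\sum_i c_i\ge c_1+(n-1)>n$, impossible; hence the first column of the board is a $0/1$ sequence, and each snake contributes to it at most one cell (a $1$). Second, a snake meeting columns $q,q+1,\dots,n$ spans exactly $q$ rows --- it has $n$ cells over $n-q+1$ columns, each turn to the right costing one shared row --- so the degenerate horizontal snake with map $(1,1,\dots,1)$ occupies a single row and is necessarily followed by one or more all-$0$ rows. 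Combining these with ``consecutive snakes begin two rows apart'' (part~1 of Lemma~\ref{WZ}), a full trip around the cylinder through snakes $s^1,\dots,s^t$ and back to $s^1$ uses exactly $2t$ rows, so the period of any column divides $2t$; a row-count parallel to Theorem~\ref{winchthm}(1) then identifies this period with $2n$, giving $R\mid 2n$. Since $C^{R/2}=\mathrm{id}$ and $2n/R$ is then an integer, $C^{n}(c)=\bigl(C^{R/2}\bigr)^{2n/R}(c)=c$, which is the lemma.

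The main obstacle is exactly this last piece of bookkeeping. Unlike the $W$-case, a $\mbox{WZ}_{\nu}$-tuple cylinder is not tiled by its snakes --- roughly half the cells are $0$ --- so one cannot read the period off directly as $\sum_i c_i$; one must track the $0$-blocks wedged between a snake and its successor, most delicately for the horizontal snake $(1,\dots,1)$, to see that each snake contributes a net of two rows to the period. I would isolate this as the single claim that between the row carrying a snake's value-$1$ cell and the row carrying the next snake's value-$1$ cell there are exactly two rows, and that after $n$ such steps the first column has returned to its initial value --- essentially part~1 of Lemma~\ref{WZ} together with the observation that the starting columns $q_j=1+(\text{number of leading zeros of }c^j)$ of successive snakes move by increments summing to $0$ over one period. (An alternative would be to verify $C^{n}=\mathrm{id}$ directly on $M_n$ by conjugating $C$ to an evident order-$n$ cyclic action on $\{0,1\}^{n-1}$, but pinning down that structure appears to require the same analysis.)
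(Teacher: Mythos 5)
Your plan is circular. You propose to deduce $C^n(c)=c$ from the fact that every $\mbox{WZ}_\nu$-orbit has length dividing $2n$ (Theorem~\ref{zerowinchthm}, Part~1), but in the paper's logical structure that theorem is a \emph{consequence} of Lemma~\ref{crawl}, not available beforehand: its proof cites Lemma~\ref{crawl} precisely to conclude that the tuple board repeats after $2n$ rows. The crucial step you describe as ``a row-count parallel to Theorem~\ref{winchthm}(1) then identifies this period with $2n$'' is exactly where the gap sits. In the ordinary-winching argument that row count succeeds only because the snake maps cycle by the obvious rotation $H$, and $H^k(c)=c$ is immediate. Here the snake maps cycle by $C$, and the identity $C^n(c)=c$ is the very thing Lemma~\ref{crawl} asserts. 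You cannot extract it from the cylinder's row count without already knowing how many distinct snake maps appear before the pattern repeats; the number of snakes in a cylinder is $R/2$, which tells you $C^{R/2}(c)=c$, but gives no independent handle on whether $R/2$ divides $n$. Your preliminary observations (that $c_1\le 1$, that a snake starting in column $q$ spans $q$ rows, that consecutive snakes begin two rows apart) are all correct and consistent with Lemma~\ref{WZ}, but they do not close this loop. You even flag the issue --- ``after $n$ such steps the first column has returned to its initial value'' --- and that sentence is essentially a restatement of the lemma, not a proof of it.

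The alternative you mention in parentheses and set aside is, in fact, what the paper does, and it does \emph{not} require any tuple-cylinder analysis. The paper defines a purely combinatorial injection $\mathcal{F}:M_n\to\mathcal{B}_n\subset\{0,1\}^{2n}$ (partial sums of $c$, complemented in the second half), defines a block-rotation $\mathcal{R}$ on $\mathcal{B}_n$, and verifies directly from the definition of crawl that $\mathcal{F}\circ C=\mathcal{R}\circ\mathcal{F}$ (Lemma~\ref{Map}). Since every word in $\mathcal{B}_n$ has exactly $n$ zeros and each maximal block $1^k0$ contains one zero, $\mathcal{R}^n=\mathrm{id}$ is immediate (Lemma~\ref{getback}), and $C^n=\mathrm{id}$ follows by conjugation and injectivity of $\mathcal{F}$. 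This is a self-contained computation on $M_n$; the tuple boards play no role in it. So the dismissal that this route ``appears to require the same analysis'' is mistaken --- it is precisely the route that breaks the circularity you would otherwise face.
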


We will need the following definitions and lemmas to prove Lemma \ref{crawl}.

\begin{defin} 
Consider the set $M_{n}$.  We define the one-to-one map ${\cal F}:M_{n}\rightarrow \{0,1\}^{2n}$ as follows: 

 for all $c\in M_{n}$, ${\cal F}(c)=b=(b_{1},\dots, b_{2n})$ where for $1\leq i \leq n$,
$$b_{i}=  \left\{ \begin{array}{ll}

 1 & \mbox{If  } \exists k ; c_{1}{+} \dots {+}c_{k} =i ; \\
0 & \mbox{Otherwise.} \\
\end{array} \right.
$$

And, for $n<i\leq 2n$, $b_{i}=\neg b_{i-n{+}1}$. 
\end{defin}
\begin{lemma}{ $\cal F$ is one-to-one. }
\end{lemma}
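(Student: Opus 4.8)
The plan is to show that the first $n$ coordinates of $\mathcal{F}(c)$ already determine $c$ uniquely; the remaining $n$ coordinates are, by construction, a fixed function of the first $n$, so they play no role in the argument and injectivity of $\mathcal{F}$ follows at once.

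First I would record the structure of an arbitrary $c=(c_1,\dots,c_n)\in M_n$: it has some number $k$ of leading zeros with $0\le k\le n-1$, then strictly positive entries $c_{k+1},\dots,c_n$ with $\sum_{i=k+1}^n c_i=n$. Consequently the partial sums $\sigma_j:=c_1+\dots+c_j$ satisfy $\sigma_1=\dots=\sigma_k=0$ and $0<\sigma_{k+1}<\sigma_{k+2}<\dots<\sigma_n=n$, the strict inequalities coming precisely from the positivity of $c_{k+1},\dots,c_n$. In particular the set of positive partial sums $P(c):=\{\sigma_j\mid 1\le j\le n,\ \sigma_j>0\}$ is a subset of $[n]$ of size exactly $n-k$, and it contains $n$.

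Next I would observe that $P(c)$ equals the support $\{i\in[n]\mid b_i=1\}$ of the first $n$ coordinates $b=\mathcal{F}(c)$, directly from the defining rule $b_i=1\iff\exists k:\ c_1+\dots+c_k=i$ for $1\le i\le n$. Hence it suffices to check that $c\mapsto P(c)$ is injective on $M_n$. Given $P=P(c)$, write $P=\{p_1<p_2<\dots<p_r\}$ with $p_r=n$. Then the number of leading zeros of $c$ must be $k=n-r$, and its positive entries are forced to be $c_{k+1}=p_1$ and $c_{k+j}=p_j-p_{j-1}$ for $2\le j\le r$. This reconstruction exhibits $c$ as a function of $P(c)$, so $\mathcal{F}(c)=\mathcal{F}(c')$ forces $P(c)=P(c')$ and therefore $c=c'$.

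This is bijection bookkeeping rather than a hard proof; the only point that genuinely needs care is the verification that, because each $c_i$ with $i>k$ is strictly positive, the partial sums are strictly increasing past the leading zeros. That fact is exactly what guarantees the encoding $c\mapsto P(c)$ discards no multiplicity information and is hence reversible, which is the crux of injectivity.
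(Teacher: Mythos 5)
Your proof is correct and takes essentially the same decoding approach as the paper: recover $c$ from the positions of the $1$'s in the first $n$ bits of $\mathcal{F}(c)$. You are, however, noticeably more careful than the paper on the one subtle point. The paper's argument asserts ``let $j$ be the smallest index where $w_j=1$; we have $x_1=y_1=j$,'' which as literally written is only valid when $c$ has no leading zeros — if $c_1=\dots=c_k=0$ then $j$ equals the first \emph{positive} entry $c_{k+1}$, not $c_1$, and the paper never says how the count $k$ of leading zeros is recovered. Your version closes this gap explicitly: you observe that the support of the first $n$ bits is exactly the set of positive partial sums $P(c)$, that $n-|P(c)|$ determines the number of leading zeros, and that consecutive differences of the elements of $P(c)$ recover the positive entries. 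That is the reconstruction the paper is implicitly relying on, spelled out so that the leading-zero case is unambiguous. No other changes are needed.
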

\begin{proof}
Assume ${\cal F}(x)={\cal F}(y)=w$, and let $j$ be the smallest index where $w_{j}= 1$. We have $x_{1}=y_{1}=j$. The next nonzero index will determine that $x_{2}=y_{2}$ and likewise, we can verify  that all entries of $x$ and $y$ are equal.
\end{proof}
\begin{defin}{
Let ${\cal B}_{n} \subset \{0,1\}^{2n}$ be the set of all $b\in\{0,1\}^{2n}$ such that for all,  $1\leq i \leq n, b_{i}=\neg b_{n+i}$. We define the action of rotation ${\cal R}:{\cal B}\rightarrow {\cal B}$ on this set as follows:
Partition $b$ into maximal blocks of $1^k0$, remove the leftmost block, and put it on the right.

}\end{defin}

\begin{exm}

 Let $b= (110010001101)$. The partitioning of  $b$ will be $(110.0.10.0.0.110.1)$. Therefore, ${\cal R}(b)= (0.10.0.0.110.1.110)$.
\end{exm}

\begin{lemma} \label{Map}
{
For $c\in M_{n}$, we have ${\cal F}( C(c))= {\cal R}({\cal F}(c))$
}\end{lemma}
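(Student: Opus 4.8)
The plan is to show that ${\cal F}$ conjugates $C$ to ${\cal R}$ by writing both sides of the claimed identity as explicit $0/1$-words and checking that they agree on their first $n$ coordinates; since both words lie in ${\cal B}_n$ (where the last $n$ bits are the complement of the first $n$), this forces equality. First I set up notation. Given $c\in M_{n}$, let $k$ be its number of leading zeros and let $(d_{1},\dots,d_{m})=(c_{k+1},\dots,c_{n})$ be its nonzero part, so each $d_{i}\ge 1$ and $d_{1}+\dots+d_{m}=n$. Straight from the definition of ${\cal F}$, the partial sums $c_{1}+\dots+c_{j}$ lying in $[1,n]$ are exactly $d_{1},\,d_{1}+d_{2},\,\dots,\,d_{1}+\dots+d_{m}=n$, so
\[
{\cal F}(c)\restr[1,n]\;=\;0^{d_{1}-1}1\,0^{d_{2}-1}1\cdots 0^{d_{m}-1}1 ,
\]
and ${\cal F}(c)\restr[n+1,2n]$ is its bitwise complement; in particular ${\cal F}(c)$ ends in $0$. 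Let $r\in\{0,1,\dots,m\}$ be maximal with $d_{1}=\dots=d_{r}=1$. Then the first $0$ of ${\cal F}(c)$ sits in position $r+1$ (if $r<m$ it is the first slot of the block $0^{d_{r+1}-1}$, where $d_{r+1}\ge 2$; if $r=m$, i.e.\ $c=(1,\dots,1)$, it is position $n+1$). Hence the leftmost block $1^{*}0$ of ${\cal F}(c)$ is $1^{r}0$, so ${\cal R}$ acts on ${\cal F}(c)$ as the cyclic left shift by $r+1$ positions; in particular, for $r<n$,
\[
{\cal R}({\cal F}(c))\restr[1,n]\;=\;\bigl({\cal F}(c)_{r+2},\,{\cal F}(c)_{r+3},\,\dots,\,{\cal F}(c)_{n+r+1}\bigr).
\]

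Next I compute $C(c)$. The condition ``$c_{1},\dots,c_{i}\le 1$'' in the definition of crawl holds precisely for $i\le k+r$; substituting this into the two branches gives, when $r<m$,
\[
C(c)\;=\;\bigl(\underbrace{0,\dots,0}_{k+r-1},\;d_{r+1}-1,\;d_{r+2},\dots,d_{m},\;r+1\bigr),
\]
where the final entry is forced by $C(c)_{n}=n-\sum_{i<n}C(c)_{i}$ together with $\sum_{i>r}d_{i}=n-r$. One checks $k+r-1\ge 0$ (if $r=0$ then $k\ge1$, because $k=0$ would force all $d_{i}=1$), so $C(c)\in M_{n}$ with nonzero part $(d_{r+1}-1,d_{r+2},\dots,d_{m},r+1)$. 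In the degenerate case $r=m$ (equivalently $m=n$, $c=(1,\dots,1)$) one instead gets $C(c)=(0,\dots,0,n)$ directly from the definition.

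Finally I compare. Applying ${\cal F}$ to the nonzero part of $C(c)$ yields, for $r<m$,
\[
{\cal F}(C(c))\restr[1,n]\;=\;0^{d_{r+1}-2}1\,0^{d_{r+2}-1}1\cdots 0^{d_{m}-1}1\,0^{r}1 ,
\]
while reading ${\cal F}(c)_{r+2},\dots,{\cal F}(c)_{n+r+1}$ off the word above produces the tail $0^{d_{r+1}-2}1\,0^{d_{r+2}-1}1\cdots 0^{d_{m}-1}1$ of the first half (of length $n-r-1$), then the first $r$ entries $0^{r}$ of the complement half, then one further $1$ --- the same word. When $r=m$ both sides equal $0^{n-1}1$. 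Since ${\cal F}(C(c))\in{\cal B}_{n}$ by construction and ${\cal R}({\cal F}(c))\in{\cal B}_{n}$ because ${\cal R}$ preserves ${\cal B}_{n}$, agreement on $[1,n]$ gives ${\cal F}(C(c))={\cal R}({\cal F}(c))$, as required.

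The only real work is bookkeeping: tracking the parameter $r$, which simultaneously governs how far the ``collapse'' in crawl reaches and where the first block of the bit-string starts, and isolating the all-ones string, for which the first zero lives in the complement half and the generic formula for $C(c)$ degenerates. Once $r$ is isolated, every case reduces to matching two explicit words.
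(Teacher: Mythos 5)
Your proof is correct and follows essentially the same route as the paper: track the first $n$ bits of ${\cal F}(c)$ as the characteristic word of the partial sums, observe that crawl's collapse of the leading run of $1$s in $c$ corresponds to peeling off the leftmost block $1^{r}0$ of that word and shifting the rest, and use the complementarity built into ${\cal B}_n$ to identify this with the block rotation ${\cal R}$. Your write-up is somewhat more careful than the paper's, cleanly isolating the parameter $r$ and treating the degenerate all-ones case explicitly.
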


\begin{proof}

Consider an arbitrary $c=(c_{1},\dots , c_{n})\in M_{n}$. 
Let's say we have $c_{1}=\dots =c_{k-1}=0$, and $c_{k}$ is the leftmost nonzero element in $c$. 
Consider the set ${\cal A}=\{a_{1}=c_{k}, a_{2}=c_{k}{+}c_{k+1},\dots ,a_{n-k}=\sum_{i=k}^{n}c_{i}\}$. Let \emph{C}$(c)=c'$ and $b=(b_{1},\dots,b_{n})$, the binary word representing $\cal A$. In other words for all $a ,  a\in {\cal A}  \Leftrightarrow b_{a}=1$. 

Similarly, let ${\cal A}'=\{c_{k'}, c_{k'}{+}c_{k'+1},\dots ,\sum_{i=k'}^{n}c'_{i}\}$ where $k'$ is the leftmost nonzero element in $c'$ and $b'$ be ${\cal A}'$s binary representation.

According to definition of crawl we know that, if $c_{k}=\dots= c_{l-1}{=}1$, we will have $c'_{k-1},\dots,c'_{l-2}=0$ and $c'_{l-1}=c_{l}{-}1$, where $l$ is the leftmost element greater than $1$. This means that if have $a_1=1, a_{2}=2, a_{3}=3,\dots a_{l-1}=l,$ they should be removed from $\cal A$ to make ${\cal A}'$ . In other words, any set of consecutive elements starting from a $1$ will be removed in ${\cal A}'$. Moreover, $c_l$ will be decremented which means $a_{1}$ and the rest of the elements in $\cal A$ will be decreased by $l$ except the last one which should always be an $n$.  Now, let's see how $b$ will change accordingly.  We remove consecutive elements starting with a $1$ from $\cal A$ which means we remove   the preceding $1$s from $b$ until we hit a $0$.  All the other elements will be decreased by $l$ which means they should be shifted to left by $l$ positions. This is equivalent to removing the first block from $b$. Now, we need to add $b'_{n{-}l{+}1},\dots, b'_{n-1}=0$. And $b'_{n}=1$ because $c'_n$ should be increased by $l$ to make the length of the snake equal to $n$. This whole process is removing the leftmost block and adding its negation to the right, which is equivalent to a rotation of a block in ${\cal F}(c)$. 
\end{proof}

\begin{lemma}\label{getback}
$\forall x\in {\cal B}_{n},\quad {\cal R}^{n}(x)=x.$
\end{lemma}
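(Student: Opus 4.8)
The plan is to show that the set $\mathcal{B}_n$ of binary words $b \in \{0,1\}^{2n}$ satisfying $b_i = \neg b_{n+i}$ for all $1 \le i \le n$ is in bijective correspondence with two-colored necklaces of a fixed shape, and that $\mathcal{R}$ acts as rotation there, from which periodicity $n$ is immediate. First I would observe that any $b \in \mathcal{B}_n$ has exactly $n$ ones among its $2n$ coordinates (since each complementary pair $\{i, n+i\}$ contributes exactly one $1$), and moreover $b$ is ``antiperiodic'': the second half is the bitwise negation of the first half. Consequently $b$ is determined by the cyclic word of length $2n$ it represents, together with the antiperiodicity constraint, which forces the number of maximal $1^k0$-blocks to be the same as a genuine combinatorial invariant of the cyclic word.

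The key step is to verify that $\mathcal{R}$ is well-defined on $\mathcal{B}_n$, i.e., that removing the leftmost maximal $1^k0$ block and appending it on the right produces another element of $\mathcal{B}_n$. Here I would argue as follows: think of $b$ as written around a cycle of length $2n$; the operation $\mathcal{R}$ is exactly a cyclic rotation of this word by the length of the first block, and cyclic rotation visibly preserves the antiperiodicity property $b_i = \neg b_{i+n}$ (indices mod $2n$), because shifting all indices by a constant preserves the relation ``differ by $n$''. So $\mathcal{R}$ is rotation of a cyclic binary word by a block-length, and the decomposition of the word into maximal $1^k0$ blocks partitions the $2n$ positions into exactly $n$ such blocks — one for each of the $n$ ones. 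Indeed, applying $\mathcal{R}$ once advances the ``block pointer'' by one block, so after $n$ applications we have cycled through all $n$ blocks and returned to the starting decomposition, giving $\mathcal{R}^n(b) = b$.

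The main obstacle is the bookkeeping around the boundary between the two halves and the corner cases: one must make sure that ``maximal blocks of the form $1^k 0$'' genuinely tile the whole word with no leftover symbols, i.e., that $b$ ends in a $0$ (equivalently $b_n = 1$, since $b_{2n} = \neg b_n$) so that the last block is complete. This follows because $b_n = 1$: the leading part of $b$ encodes the partial sums $c_1 + \dots + c_k$ for $c \in M_n$, and the last partial sum equals $n$, so $b_n = 1$, hence $b_{2n} = 0$, and the block decomposition is well-formed. With that settled, the argument reduces to the elementary fact that rotating a cyclic word by whole blocks, where there are exactly $n$ blocks, has order dividing $n$; since the problem only asserts $\mathcal{R}^n(x) = x$ (not that $n$ is the minimal period), this suffices and no finer analysis is needed.
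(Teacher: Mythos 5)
Your proof takes essentially the same route as the paper's: count the maximal $1^k0$ blocks, show there are exactly $n$ of them, and conclude that $n$ block-rotations return the word to itself. However, your justification for the count of $n$ blocks is off. You write that the word is partitioned into $n$ blocks ``one for each of the $n$ ones,'' but a maximal block $1^k0$ contains $k$ ones (possibly zero of them) and exactly one zero. So the number of blocks equals the number of \emph{zeros}, not the number of ones. The paper's one-line argument gets this right: each block contains exactly one $0$, and the antiperiodicity $b_i = \neg b_{n+i}$ forces exactly $n$ zeros among the $2n$ coordinates, hence $n$ blocks. Your conclusion is still numerically correct only because the number of ones happens to also equal $n$, but the causal link you state is wrong.

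Two further small points. First, your observation that ${\cal R}$ is a cyclic rotation of the word viewed on a circle of length $2n$, and hence preserves the antiperiodicity constraint, is a nice sanity check on well-definedness that the paper leaves implicit; this is a genuine (if minor) addition. Second, you correctly flag that the block decomposition requires $b_{2n}=0$, but your argument that $b_n = 1$ invokes the description of $b$ as encoding partial sums of some $c\in M_n$, i.e.\ $b$ in the image of ${\cal F}$; this is not part of the hypothesis of Lemma~\ref{getback}, which is stated for all of ${\cal B}_n$. The paper's proof is silent on this too, so it is a shared gap rather than one you introduced, but if you want to keep the discussion it should be phrased as a restriction on the domain of ${\cal R}$ (or on how the lemma is used) rather than as something derivable from $b\in{\cal B}_n$ alone.
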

\begin{proof}
Consider any arbitrary $x$, any block in $x$ has a single $0$. Moreover, the number of zeros in $x$ is $n$. Therefore, after $n$ rotations $x$ will get back to its initial state.  
\end{proof}

\textbf{Proof of Lemma \ref{crawl}.}  From Lemma \ref{getback} and \ref{Map} and the fact that $\cal F$ is a one-to-one function we have, $\forall c\in {\cal M}_{n},\quad C^{n}(c)=n$.

\textbf{Proof of Theorem \ref{zerowinchthm} Part 1.}
 By employing Lemma \ref{WZ} we can verify that the snakes appear in alternating rows. By Lemma \ref{crawl} we know that each snake gets back to itself after $n$ crawls. Thus, $T(1,.)=T(2n{+}1,.)$ where $T$ is a tuple board corresponding to \emph{WZ}, and  {{\emph W}}$Z^{2n}(x)=x$.
 
$\hspace{5.9 in}\Box$

\textbf{Proof of Theorem \ref{zerowinchthm} Part 2.}
Part 2. Using Lemma \ref{WZ} part $3$ we know that half of any tuple board is filled with zeros, and the rest is filled by equal repetitions of numbers $1$ to $n$. In addition, there are $n$ snakes in any tuple board and in any snake $j$ appears once and only once. Therefore, each element will appear $n$ times in the tuple board and the average of $f_{j}=1/2$ for each $j$. 

$\hspace{5.9 in}\Box$

\section*{Acknowledgements}
\label{sec:ack}
This work benefited from helpful discussions with James Propp and Tom Roby. In fact Theorem \ref{winchthm}, the correspondence between winching and promotion/rowmotion and also the homomesic functions discussed in Section \ref{results} were observed and brought to author's attention  by James Propp. I also would like to thank Peter Winkler for his very helpful assistance and comments.

% \bibliographystyle{plain} 
% \bibliography{myBibFile} 
% If you use BibTeX to create a bibliography
% then copy and past the contents of your .bbl file into your .tex file

\end{document}